\documentclass[twoside,leqno,10pt, A4]{amsart}
\usepackage{amsfonts}
\usepackage{amsmath}
\usepackage{amscd}
\usepackage{amssymb}
\usepackage{amsthm}
\usepackage{amsrefs}
\usepackage{latexsym}
\usepackage{mathrsfs}
\usepackage{bbm}
\usepackage{amscd}
\usepackage{amssymb}
\usepackage{amsthm}
\usepackage{amsrefs}
\usepackage{latexsym}
\usepackage{mathrsfs}
\usepackage{bbm}
\usepackage{enumerate}
\usepackage{graphicx}
\allowdisplaybreaks
\usepackage{color}
\setlength{\textwidth}{18.2cm}
\setlength{\oddsidemargin}{-0.7cm}
\setlength{\evensidemargin}{-0.7cm}
\setlength{\topmargin}{-0.7cm}
\setlength{\headheight}{0cm}
\setlength{\headsep}{0.5cm}
\setlength{\topskip}{0cm}
\setlength{\textheight}{23.9cm}
\setlength{\footskip}{.5cm}

\begin{document}

\newtheorem{theorem}[subsection]{Theorem}
\newtheorem{proposition}[subsection]{Proposition}
\newtheorem{lemma}[subsection]{Lemma}
\newtheorem{corollary}[subsection]{Corollary}
\newtheorem{conjecture}[subsection]{Conjecture}
\newtheorem{prop}[subsection]{Proposition}
\newtheorem{defin}[subsection]{Definition}

\numberwithin{equation}{section}
\newcommand{\mr}{\ensuremath{\mathbb R}}
\newcommand{\mc}{\ensuremath{\mathbb C}}
\newcommand{\N}{\mathbb{N}}
\newcommand{\dif}{\mathrm{d}}
\newcommand{\intz}{\mathbb{Z}}
\newcommand{\ratq}{\mathbb{Q}}
\newcommand{\natn}{\mathbb{N}}
\newcommand{\comc}{\mathbb{C}}
\newcommand{\rear}{\mathbb{R}}
\newcommand{\prip}{\mathbb{P}}
\newcommand{\uph}{\mathbb{H}}
\newcommand{\fief}{\mathbb{F}}
\newcommand{\majorarc}{\mathfrak{M}}
\newcommand{\minorarc}{\mathfrak{m}}
\newcommand{\sings}{\mathfrak{S}}
\newcommand{\fA}{\ensuremath{\mathfrak A}}
\newcommand{\mn}{\ensuremath{\mathbb N}}
\newcommand{\mq}{\ensuremath{\mathbb Q}}
\newcommand{\half}{\tfrac{1}{2}}
\newcommand{\f}{f\times \chi}
\newcommand{\summ}{\mathop{{\sum}^{\star}}}
\newcommand{\chiq}{\chi \bmod q}
\newcommand{\chidb}{\chi \bmod db}
\newcommand{\chid}{\chi \bmod d}
\newcommand{\sym}{\text{sym}^2}
\newcommand{\hhalf}{\tfrac{1}{2}}
\newcommand{\sumstar}{\sideset{}{^*}\sum}
\newcommand{\sumprime}{\sideset{}{'}\sum}
\newcommand{\sumprimeprime}{\sideset{}{''}\sum}
\newcommand{\sumflat}{\sideset{}{^\flat}\sum}
\newcommand{\shortmod}{\ensuremath{\negthickspace \negthickspace \negthickspace \pmod}}
\newcommand{\V}{V\left(\frac{nm}{q^2}\right)}
\newcommand{\sumi}{\mathop{{\sum}^{\dagger}}}
\newcommand{\mz}{\ensuremath{\mathbb Z}}
\newcommand{\leg}[2]{\left(\frac{#1}{#2}\right)}
\newcommand{\muK}{\mu_{\omega}}
\newcommand{\thalf}{\tfrac12}
\newcommand{\lp}{\left(}
\newcommand{\rp}{\right)}
\newcommand{\Lam}{\Lambda_{[i]}}
\newcommand{\lam}{\lambda}
\newcommand{\af}{\mathfrak{a}}
\newcommand{\sw}{S_{[i]}(X,Y;\Phi,\Psi)}
\newcommand{\lz}{\left(}
\newcommand{\pz}{\right)}
\newcommand{\bfrac}[2]{\lz\frac{#1}{#2}\pz}
\newcommand{\odd}{\mathrm{\ primary}}
\newcommand{\even}{\text{ even}}
\newcommand{\res}{\mathrm{Res}}
\newcommand{\sumn}{\sumstar_{(c,1+i)=1}  w\left( \frac {N(c)}X \right)}
\newcommand{\lab}{\left|}
\newcommand{\rab}{\right|}
\newcommand{\Go}{\Gamma_{o}}
\newcommand{\Ge}{\Gamma_{e}}
\newcommand{\M}{\widehat}
\def\su#1{\sum_{\substack{#1}}}
\newcommand{\Echar}{\mathbb{E}^{\text{char}}}
\newcommand{\E}{\mathbb{E}}
\newcommand{\p}{\mathbb{P}}

\theoremstyle{plain}
\newtheorem{conj}{Conjecture}
\newtheorem{remark}[subsection]{Remark}

\newcommand{\pfrac}[2]{\left(\frac{#1}{#2}\right)}
\newcommand{\pmfrac}[2]{\left(\mfrac{#1}{#2}\right)}
\newcommand{\ptfrac}[2]{\left(\tfrac{#1}{#2}\right)}
\newcommand{\pMatrix}[4]{\left(\begin{matrix}#1 & #2 \\ #3 & #4\end{matrix}\right)}
\newcommand{\ppMatrix}[4]{\left(\!\pMatrix{#1}{#2}{#3}{#4}\!\right)}
\renewcommand{\pmatrix}[4]{\left(\begin{smallmatrix}#1 & #2 \\ #3 & #4\end{smallmatrix}\right)}
\def\en{{\mathbf{\,e}}_n}

\newcommand{\ppmod}[1]{\hspace{-0.15cm}\pmod{#1}}
\newcommand{\ccom}[1]{{\color{red}{Chantal: #1}} }
\newcommand{\acom}[1]{{\color{blue}{Alia: #1}} }
\newcommand{\alexcom}[1]{{\color{green}{Alex: #1}} }
\newcommand{\hcom}[1]{{\color{brown}{Hua: #1}} }

\makeatletter
\def\widebreve{\mathpalette\wide@breve}
\def\wide@breve#1#2{\sbox\z@{$#1#2$}%
     \mathop{\vbox{\m@th\ialign{##\crcr
\kern0.08em\brevefill#1{0.8\wd\z@}\crcr\noalign{\nointerlineskip}%
                    $\hss#1#2\hss$\crcr}}}\limits}
\def\brevefill#1#2{$\m@th\sbox\tw@{$#1($}%
  \hss\resizebox{#2}{\wd\tw@}{\rotatebox[origin=c]{90}{\upshape(}}\hss$}
\makeatletter

\title[Upper Bounds for low moments of twisted Fourier coefficients of modular forms]{Upper Bounds for low moments of twisted Fourier coefficients of modular forms}

\author[P. Gao]{Peng Gao}
\address{School of Mathematical Sciences, Beihang University, Beijing 100191, China}
\email{penggao@buaa.edu.cn}

\author[X. Wu]{Xiaosheng Wu}
\address {School of Mathematics, Hefei University of Technology, Hefei 230009,
China.}
\email {xswu@amss.ac.cn}

\begin{abstract}
   For any large prime $q$, $1 \leq x \leq q$ and any real $0 \leq k  \leq 1$, we prove an upper bound for the following $2k$-th moment
    \begin{equation*}
    \sum_{\substack{\chi \shortmod q}} \Big| \sum_{n\leq x} \chi(n)\lambda(n)\Big|^{2k},
\end{equation*}
  where $\lambda(n)$ denotes the Fourier coefficients of a fixed modular form. In particular, our result implies that $$\displaystyle \frac 1{q-1}\sum_{\substack{\chi \shortmod q}} \Big| \sum_{n\leq x} \chi(n)\lambda(n)\Big|= o(\sqrt{x}),$$ when both $x$ and $q/x$ tend to infinity with $q$. 
\end{abstract}

\maketitle

\noindent {\bf Mathematics Subject Classification (2010)}: 11L40, 11M06  \newline

\noindent {\bf Keywords}: Dirichlet characters, modular $L$-functions, Fourier coefficients, upper bounds

\section{Introduction}\label{sec1}

The study on character sums and related topics has received a lot of attention in number theory as they have many important applications. To better understand the behaviors of the character sums, one may often model them using random multiplicative functions. For example, one may simulate the behaviour of any Dirichlet character using a Steinhaus random multiplicative function $f(n)$ defined by $f(n) = \prod_{p^{a} || n} f(p)^{a}$ for all $n \in \N$, where $(f(p))_{p \; \text{prime}}$ is a sequence of independent random variables, each distributed uniformly on the complex unit circle. Here and throughout the paper, we reserve the letter $p$ for a prime number.
  
  In \cite{Harper20}, A. J. Harper determined the order of magnitude of the low moments of sums of Steinhaus random multiplicative functions. Using the ideas in \cite{Harper20}, Harper was further able to show in \cite[Theorem 3]{Harper23} that for large primes $q$, uniformly for any $1 \leq x \leq q$, any $0 \leq k \leq 1$, and any multiplicative function $u(n)$ that has absolute value $1$ on primes and absolute value at most $1$ on prime powers, 
\begin{align}
\label{genlowmoment} 
 \frac{1}{\varphi(q)} \sum_{\chi \shortmod q} \Big| \sum_{n \leq x}  \chi(n)u(n)\Big| ^{2k} \ll \Biggl(\frac{x}{1 + (1-k)\sqrt{\log\log(10L)}} \Biggr)^k, 
\end{align}
where $\varphi(q)$ is the Euler totient function and we set $L=L_q=  \min\{x,q/x\}$ throughout the paper. 

  Setting $u(n)=1$ in \eqref{genlowmoment} recovers \cite[Theorem 1]{Harper23}, which asserts that under the same notation used in \eqref{genlowmoment},  
\begin{align}
\label{genlowmomentDC} 
     \frac{1}{\varphi(q)} \sum_{\chi \shortmod q}  \Big| \sum_{n\leq x} \chi(n) \Big|^{2k} \ll \Biggl(\frac{x}{1 + (1-k)\sqrt{\log\log(10L)}} \Biggr)^k, 
\end{align}
 The above establishes upper bounds for low moments of Dirichlet character sums and it is noted in \cite{Harper23} that these bounds are optimal conjecturally. Note also that the special case $k=1/2$ illustrates that when both $x$ and $q/x$ tend to infinity with $q$, 
\begin{equation}
\label{squareroot}
     \frac{1}{\varphi(q)} \sum_{\chi \shortmod q}  \Big| \sum_{n\leq x} \chi(n) \Big| =o(\sqrt{x}). 
\end{equation}
  The above implies that the sums $\sum_{n\leq x} \chi(n)$  typically exhibit better than square-root cancellation. 

 On the other hand, one may also study high moments of  Dirichlet character sums. In \cite{Szab},  B. Szab\'o  proved that under the generalized Riemann hypothesis (GRH) that for a fixed real number $k>2$ and a large integer $q$, we have for $2 \leq x \leq q$,
\begin{align}
\label{genJacobiupper}
  \frac{1}{\varphi(q)} \sum_{\chi\in X_q^*}\bigg|\sum_{n\leq x} \chi(n)\bigg|^{2k} \ll_k  x^k \left(\min \left(\log x, \log \frac {2q}{x} \right) \right)^{(k-1)^2},
\end{align}
  where $X_q^*$ denotes the set of primitive Dirichlet characters modulo $q$. Here we remark that the proofs of \eqref{genlowmomentDC} and \eqref{genJacobiupper} are based on different approaches. The former builds on many strategies introduced in \cite{Harper20} to study on low moments of random multiplicative functions while the latter relies on results concerning sharp upper bounds on shifted moments of Dirichlet $L$-function on the critical line. We also note that it was shown in \cite[Theorem 1]{Szab24} that the bounds given in \eqref{genJacobiupper} are optimal under GRH for primes $q$. \newline

  Instead of Dirichlet character sums, one may consider other types of sums. For example, the one given on the left-hand side of \eqref{genlowmoment} involving with a general multiplicative function $u(n)$. When $u(n)$ equals the Fourier coefficient $\lambda(n)$ of a fixed holomorphic Hecke eigenform $f$ of weight $\kappa \equiv 0 \pmod 4$ for the full modular group $SL_2 (\mathbb{Z})$, upper and lower bounds for high moments ($k>2$) of the left-hand side expression in \eqref{genlowmoment} were obtained in \cites{G&Zhao24-12, G&Zhao25-11}. Here we recall that the Fourier expansion of $f$ at infinity is given by
\[
f(z) = \sum_{n=1}^{\infty} \lambda (n) n^{\frac{\kappa -1}{2}} e(nz), \quad \mbox{where} \quad e(z) = \exp (2 \pi i z).
\]
  
  It is the aim of this paper is to obtain upper bounds for low moments of the left-hand side expression in \eqref{genlowmoment} with $u(n)=\lambda(n)$ there. Our result is as follows.
\begin{theorem}
\label{lowerboundsfixedmodmean}
With the notation as above. Let $q$ be a large prime number. We have for $1 \leq x \leq q$ and any real $0 \leq k \leq 1$,
\begin{align*}
 \frac{1}{\varphi(q)} \sum_{\chi \shortmod q}  \Big| \sum_{n\leq x} \chi(n)\lambda(n) \Big|^{2k} \ll \Biggl(\frac{x}{1 + (1-k)\sqrt{\log\log(10L)}} \Biggr)^k. 
\end{align*}
\end{theorem}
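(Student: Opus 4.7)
The key observation is that, by Deligne's bound for the level-$1$ holomorphic Hecke eigenform $f$, one has $\lambda(p) = \alpha_p + \beta_p$ at each prime $p$, with $|\alpha_p| = |\beta_p| = 1$ and $\alpha_p\beta_p = 1$. Extending $\alpha, \beta$ to completely multiplicative functions $u_\alpha, u_\beta$ via $u_\alpha(p) = \alpha_p$, $u_\beta(p) = \beta_p$, one obtains the Dirichlet convolution identity $\lambda = u_\alpha * u_\beta$, with $|u_\alpha(n)| = |u_\beta(n)| = 1$ for every $n$. In particular, the partial character sums $T_\alpha(\chi; y) := \sum_{m \leq y} \chi(m) u_\alpha(m)$ and $T_\beta(\chi; y) := \sum_{m \leq y} \chi(m) u_\beta(m)$ fall within the scope of the generalized Harper bound~\eqref{genlowmoment}.

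The plan is to apply the hyperbola method at cut-off $\sqrt{x}$ to obtain
\[
\sum_{n \leq x} \chi(n)\lambda(n) = \sum_{a \leq \sqrt{x}} \chi(a) u_\alpha(a)\, T_\beta(\chi; x/a) + \sum_{b \leq \sqrt{x}} \chi(b) u_\beta(b)\, T_\alpha(\chi; x/b) - T_\alpha(\chi;\sqrt{x})\, T_\beta(\chi;\sqrt{x}).
\]
Since $0 \leq 2k \leq 2$, the subadditivity $|u+v-w|^{2k} \ll |u|^{2k}+|v|^{2k}+|w|^{2k}$ reduces the task to bounding the $2k$-th moment of each of the three pieces. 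For the ``diagonal'' piece, Cauchy--Schwarz in $\chi$ followed by~\eqref{genlowmoment} applied to the $4k$-th moment of $T_\alpha, T_\beta$ at length $\sqrt{x}$ works when $k \leq 1/2$; for $k \in (1/2, 1]$ I would interpolate via H\"older with the trivial second-moment bound. For the off-diagonal pieces I would split the outer sum dyadically in $a$ (respectively $b$) into blocks $A \leq a < 2A$; since $q$ is prime and $ab \leq x \leq q$, orthogonality of characters modulo $q$ forces the diagonal $ab = a'b'$ in the second-moment expansion, reducing each block's second moment to a divisor-controlled arithmetic sum, which is then combined with~\eqref{genlowmoment} applied to the inner sums $T_\beta(\chi; x/a)$ of length $\sim x/A$.

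The main obstacle is preserving the full savings factor $(1+(1-k)\sqrt{\log\log(10L)})^{-k}$ from Harper's bound through the decomposition, rather than recovering only the trivial $x^k$ that follows from the second moment. Cauchy--Schwarz naively replaces the exponent $1-k$ in the savings by $1-2k$, which vanishes at $k = 1/2$ and becomes useless for larger $k$; the second-moment bound alone gives no savings at all. The right interpolation should be an application of H\"older's inequality with $k$-dependent exponents tuned so that one endpoint uses~\eqref{genlowmoment} and the other endpoint uses the sharp second-moment estimate $\frac{1}{\varphi(q)}\sum_\chi |\sum_{n \leq x}\chi(n)\lambda(n)|^2 \ll x$ (valid by Rankin--Selberg since $x \leq q$), keeping the $(1-k)$-saving intact uniformly across the whole range $k \in [0,1]$. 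Managing the additional $\log$-type losses introduced by the dyadic summation (so that they are absorbed into the Harper savings factor) is the other technical point I expect to require care.
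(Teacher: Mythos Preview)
Your factorisation $\lambda = u_\alpha * u_\beta$ with $|u_\alpha|=|u_\beta|\equiv 1$ is correct, and the hyperbola splitting is of course valid.  However, the approach cannot deliver the theorem as stated, and the obstacle you flag is not merely a technical point requiring care but a genuine obstruction.

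\textbf{Diagonal term.}  At $k=1/2$ the target is $\Echar|T_\alpha(\chi;\sqrt{x})\,T_\beta(\chi;\sqrt{x})|\ll \sqrt{x}/(\log\log L)^{1/4}$.  Any H\"older splitting with exponents $p,p'$ forces one of $p,p'$ to be $\ge 2$.  On that factor the only available unconditional input is a higher-moment bound of the shape $(\Echar|T|^{p'})^{1/p'}\ll x^{1/4}(\log x)^{c(p'-2)}$ (from the fourth moment via divisor counting), while on the $p<2$ factor Harper gives $(\Echar|T|^{p})^{1/p}\ll x^{1/4}((1-p/2)\sqrt{\log\log})^{-1/2}$.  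Writing $p=2-\varepsilon$ one gets a product $\asymp x^{1/2}(\log\log)^{-1/4}\,\varepsilon^{-1/2}(\log x)^{c\varepsilon}$, and no choice of $\varepsilon$ makes $\varepsilon^{-1/2}(\log x)^{c\varepsilon}\ll 1$; the optimum $\varepsilon\asymp 1/\log\log x$ loses a factor $(\log\log)^{1/2}$, which is worse than trivial.  The alternative you suggest, interpolating against the second moment $\Echar|\sum_{n\le x}\chi(n)\lambda(n)|^2\ll x$, only \emph{degrades} the saving exponent: if the bound is known at some $k_0<k$, H\"older between $k_0$ and $1$ yields a $(\log\log)$-exponent $k_0(1-k)/(1-k_0)<k$, strictly weaker than the required $k$.  (This is why the paper reduces downward from $k<2/3$ to $k=2/3$, never upward.)

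\textbf{Off-diagonal terms.}  A dyadic decomposition in $a$ produces $\asymp\log x$ blocks, and already the recombination via subadditivity or H\"older costs a power of $\log x$, which swamps any $\sqrt{\log\log}$ saving.  Within a single block $A\le a<2A$, the inner sums $T_\beta(\chi;x/a)$ vary with $a$ and cannot be factored out; the orthogonality computation you describe gives $\Echar|\Sigma_{1,j}|^2\ll x\log A$ but contains no mechanism for importing Harper's saving on $T_\beta$ into a bound for the block.

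\textbf{What the paper does.}  The paper does not reduce to Harper's Theorem~3 at all.  Instead it reruns Harper's entire machinery (partition of unity, conditional second moments, passage to Steinhaus random model, multiplicative-chaos barrier estimates) with $\lambda(n)$ in place of a unimodular $u(n)$.  The crucial point making this go through is the Rankin--Selberg input $\sum_{p\le x}\lambda(p)^2/p = \log\log x + O(1)$, so that the relevant Euler-product variances are the same as in the unimodular case; once this is in hand, every step of Harper's proof carries over with only notational changes.  The convolution structure $\lambda=u_\alpha*u_\beta$ plays no role.
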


  We note that similar to \eqref{squareroot}, Theorem \ref{lowerboundsfixedmodmean} implies that for any fixed $0 < k < 1$ and any $x$ such that $x$ and $q/x$ both tend to infinity with $q$, we have 
\begin{align*}
 \frac{1}{q-1} \sum_{\chi \shortmod q}\Big| \sum_{n \leq x} \chi(n)\lambda(n)\Big| ^{2k} = o(\sqrt{x}),
\end{align*}
  so that the sums $\sum_{n\leq x} \chi(n)\lambda(n)$ also typically exhibit better than square-root cancellation. 
 
  Note that $\lambda(n)$ does not satisfy the condition imposed for $u(n)$ in the establishment of \eqref{genlowmoment}, hence the assertion of Theorem \eqref{lowerboundsfixedmodmean} does not follow \eqref{genlowmoment}. Our proof of Theorem \ref{lowerboundsfixedmodmean} is nevertheless achieved by extending the treatments in the proof of \cite[Theorem 3]{Harper23}.

\section{Preliminaries}
\label{sec 2}

  Before we embark on the proof of Theorem \ref{lowerboundsfixedmodmean}, we first gather some auxiliary results. 

\subsection{Cusp form $L$-functions}
\label{sec:cusp form}

    We reserve the letter $p$ for a prime number throughout in this paper.  Recall that $f$ is a fixed holomorphic Hecke eigenform $f$ of weight $\kappa \equiv 0 \pmod 4$ for the full modular group $SL_2 (\mathbb{Z})$. The associated modular $L$-function $L(s, f)$ for $\Re(s)>1$ is then defined as
\begin{align*}
L(s, f ) &= \sum_{n=1}^{\infty} \frac{\lambda(n)}{n^s}
 = \prod_{p} \left(1 + \sum^{\infty}_{i=1}\frac{\lambda^i(p)}{p^{is}}\right)=\prod_{p} \left(1 - \frac{\alpha_p }{p^s} \right)^{-1}\left(1 - \frac{\beta_p }{p^s} \right)^{-1}.
\end{align*}
  It follows that $\lambda(n)$ is multiplicative that satisfies for any prime $p$ and any non-negative integer $m$,
\begin{align}
\label{Lambdapkrel}
 \lambda^m(p)=\sum^{m}_{j=0}\alpha^{m-j}_p\beta^{j}_p.  
\end{align}

  Moreover, by Deligne's proof \cite{D} of the Weil conjecture, we have
\begin{align}
\label{alpha}
|\alpha_{p}|=|\beta_{p}|=1, \quad \alpha_{p}\beta_{p}=1.
\end{align}
 We then deduce that $\lambda(n) \in \mr$ such that $\lambda (1) =1$ and
\begin{align}
\label{lambdabound}
\begin{split}
  |\lambda(n)| \leq d(n) \ll n^{\varepsilon},
\end{split}
\end{align}
 where $d(n)$ is the number of positive divisors $n$ and the last estimation above follows from \cite[Theorem 2.11]{MVa1}.  We also note  that $|\lambda(n)|^2=\lambda^2(n) \geq 0$ as $\lambda(n)$ is real. 

  The symmetric square $L$-function $L(s, \operatorname{sym}^2 f)$ of $f$ is defined for $\Re(s)>1$ by
 (see \cite[p. 137]{iwakow} and \cite[(25.73)]{iwakow})
\begin{align}
\label{Lsymexp}
\begin{split}
 L(s, \operatorname{sym}^2 f)=& \prod_p(1-\alpha^2_pp^{-s})^{-1}(1-p^{-s})^{-1}(1-\beta^2_pp^{-s})^{-1} \\
 = & \zeta(2s) \sum_{n \geq 1}\frac {\lambda(n^2)}{n^s}=\prod_{p}\left( 1-\frac {\lambda(p^2)}{p^s}+\frac {\lambda(p^2)}{p^{2s}}-\frac {1}{p^{3s}} \right)^{-1}.
\end{split}
\end{align}
 A result of G. Shimura \cite{Shimura} implies that $L(s, \operatorname{sym}^2 f)$ has no pole at $s=1$. Furthermore, the corresponding completed symmetric square $L$-function
\begin{align*}
 \Lambda(s, \operatorname{sym}^2 f)=& \pi^{-3s/2}\Gamma \Big( \frac {s+1}{2} \Big)\Gamma \Big(\frac {s+\kappa-1}{2}\Big) \Gamma \Big(\frac {s+\kappa}{2}\Big) L(s, \operatorname{sym}^2 f)
\end{align*}
  is entire and satisfies the functional equation $\Lambda(s, \operatorname{sym}^2 f)=\Lambda(1-s, \operatorname{sym}^2 f)$. \newline
  
   We derive from \eqref{Lambdapkrel}, \eqref{alpha} and \eqref{Lsymexp} that
\begin{align}
\label{alphalambdarel}
\begin{split}
  \alpha_p+\beta_p= &\lambda(p), \\
  \alpha^2_p+\beta^2_p=& \lambda^2(p)-2=\lambda(p^2)-1.
\end{split}
\end{align}

Thus, it follows from the above that
\begin{align}
\label{sumlambdapsquare}
\begin{split}
  \lambda^2(p)=\lambda(p^2)+1.
\end{split}
\end{align}

  We also note that R. A. Rankin \cite{Rankin39} and A. Selberg \cite{Selberg40} showed independently that for $x \geq 1$, 
\begin{align}
\label{lambdasquareasymp}
\begin{split}
  \sum_{n \leq x}\lambda^2(n) =\frac {L(1, \operatorname{sym}^2 f)}{\zeta(2)}x+O(x^{3/5}).
\end{split}
\end{align}

 We end this section by including a result on certain sums over primes.
\begin{lemma}
\label{RS}
 Let $x \geq 2$. We have, for some constant $b_1, b_2$,
\begin{align}
\label{merten}
\sum_{p\le x} \frac{1}{p} =& \log \log x + b_1+ O\Big(\frac{1}{\log x}\Big), \; \mbox{and} \\
\label{merten1}
\sum_{p\le x} \frac{\lambda^2(p)}{p} =& \log \log x + b_2+ O\Big(\frac{1}{\log x}\Big).
\end{align}
\end{lemma}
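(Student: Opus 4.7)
The plan is as follows. Estimate \eqref{merten} is the classical Mertens theorem, which I would quote from a standard reference without reproof. The content of the lemma lies in \eqref{merten1}.

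For \eqref{merten1}, my first move is to apply the Hecke-type identity \eqref{sumlambdapsquare}, $\lambda^2(p) = \lambda(p^2) + 1$, to split
\begin{equation*}
\sum_{p \le x} \frac{\lambda^2(p)}{p} = \sum_{p \le x} \frac{1}{p} + \sum_{p \le x} \frac{\lambda(p^2)}{p}.
\end{equation*}
Mertens (namely \eqref{merten}) handles the first sum on the right, so it suffices to prove that $\sum_{p \le x} \lambda(p^2)/p = c + O(1/\log x)$ for some absolute constant $c$; the lemma then follows with $b_2 = b_1 + c$.

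To establish this, I would study the Euler product in \eqref{Lsymexp} for $L(s, \operatorname{sym}^2 f)$. Taking logarithms, using $-\log(1-y) = y + O(|y|^2)$ for $|y| \le 1/2$, and invoking \eqref{alphalambdarel} together with the Deligne bound \eqref{alpha} to absorb the quadratic-and-higher contributions into a Dirichlet series $H(s)$ that is absolutely convergent for $\Re(s) > 1/2$, one obtains
\begin{equation*}
\log L(s, \operatorname{sym}^2 f) = \sum_p \frac{\alpha_p^2 + 1 + \beta_p^2}{p^s} + H(s) = \sum_p \frac{\lambda(p^2)}{p^s} + H(s).
\end{equation*}
Shimura's theorem, as cited in the excerpt, ensures that $L(s, \operatorname{sym}^2 f)$ is entire and nonzero at $s=1$, so $\log L(s, \operatorname{sym}^2 f)$, and hence $\sum_p \lambda(p^2)/p^s$, extends analytically to a neighbourhood of $s=1$.

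To convert this analytic information into a partial-sum estimate with the required error $O(1/\log x)$, I would apply Perron's formula to the Dirichlet series $\sum_p \lambda(p^2)(\log p) p^{-s}$ and shift the contour past $\Re(s) = 1$ using the standard de~la~Vall\'ee~Poussin zero-free region for $L(s, \operatorname{sym}^2 f)$ (a classical consequence of the functional equation stated in the excerpt together with the Rankin--Selberg convolution). This yields
\begin{equation*}
\sum_{p \le x} \lambda(p^2) \log p \ll x \exp\bigl(-c\sqrt{\log x}\bigr),
\end{equation*}
and partial summation then converts this into $\sum_{p \le x} \lambda(p^2)/p = c + O(1/\log x)$, finishing the proof. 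The main obstacle is verifying the zero-free region for $L(s, \operatorname{sym}^2 f)$; this is classical but nontrivial, and it is essentially the only analytic input beyond the algebraic Hecke identity \eqref{sumlambdapsquare} that reduces the prime sum for $\lambda^2(p)$ to the symmetric square $L$-function.
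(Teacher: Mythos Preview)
Your proposal is correct. The paper's own proof is simply a pair of citations: \eqref{merten} to Montgomery--Vaughan and \eqref{merten1} to Lemma~2.1 of \cite{GHH}, with no argument given. Your outline is essentially what lies behind the cited result: reduce via $\lambda^2(p)=\lambda(p^2)+1$ to the prime sum $\sum_{p\le x}\lambda(p^2)/p$, and control that via the prime number theorem for $L(s,\operatorname{sym}^2 f)$, which in turn rests on the standard zero-free region. All steps are sound; the only caveat is that you correctly flag the zero-free region as the nontrivial analytic input, and you should be explicit that nonvanishing of $L(1,\operatorname{sym}^2 f)$ (needed so that $\log L$ is analytic at $s=1$) follows from the Rankin--Selberg theory, not merely from Shimura's holomorphy statement quoted in the paper.
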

\begin{proof}
  The expression in \eqref{merten} can be found in part (d) of \cite[Theorem 2.7]{MVa1} and the expression in \eqref{merten1} follows from \cite[Lemma 2.1]{GHH}.
\end{proof}

\subsection{A smooth partition of unity}\label{subsecpartition}

  We include in this section a result concerning a smooth partition of unity, taken from \cite[Approximation Result 1]{Harper23}.
\begin{lemma}\label{apres}
Let $N \in \N$ be large, and $\delta > 0$ be small. There exist functions $g : \mr \rightarrow \mr$ (depending on $\delta$) and $g_{N+1} : \mr \rightarrow \mr$ (depending on $\delta$ and $N$) such that, if we define $g_{j}(x) = g(x - j)$ for all integers $|j| \leq N$, we have the following properties:
\begin{enumerate}
\item $\sum_{|j| \leq N} g_{j}(x) + g_{N+1}(x) = 1$ for all $x \in \mr$;

\item $g(x) \geq 0$ for all $x \in \mr$, and $g(x) \leq \delta$ whenever $|x| > 1$;

\item $g_{N+1}(x) \geq 0$ for all $x \in \mr$, and $g_{N+1}(x) \leq \delta$ whenever $|x| \leq N$;

\item for all $l \in \N$ and all $x \in \mr$, we have the derivative estimate $|\frac{d^{l}}{dx^{l}} g(x)| \leq \frac{1}{\pi (l+1)} (\frac{2\pi}{\delta})^{l+1}$.

\end{enumerate}
\end{lemma}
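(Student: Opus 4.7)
The plan is to construct $g$ via its Fourier transform, so that properties (1) and (4) become automatic consequences of Poisson summation and Fourier inversion, while properties (2) and (3) reduce to quantitative decay estimates on $g$. Concretely, I would choose a real, even, smooth function $\hat g:\mathbb R\to\mathbb R$ with support in $[-2\pi/\delta,\,2\pi/\delta]$, $\hat g(0)=1$, $\|\hat g\|_\infty\leq 1$, and the discrete vanishing condition $\hat g(2\pi k)=0$ for every nonzero integer $k$ (automatic for $|k|>1/\delta$ by the support; imposed as finitely many linear constraints otherwise). To force $g\geq 0$ I would take $g=h^2$ for a real, even $h$ whose Fourier transform is supported in $[-\pi/\delta,\,\pi/\delta]$, so that $\hat g$ is the (rescaled) auto-convolution of $\hat h$ and hence still supported in the required window.

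With $g$ so constructed, Poisson summation yields
\begin{align*}
\sum_{j\in\mathbb Z}g(x-j)=\sum_{k\in\mathbb Z}\hat g(2\pi k)e^{2\pi i k x}=\hat g(0)=1\qquad\text{for every }x\in\mathbb R,
\end{align*}
so defining $g_{N+1}(x):=1-\sum_{|j|\leq N}g_j(x)=\sum_{|j|>N}g(x-j)$ builds in property (1), and the non-negativity half of (3) is inherited from $g\geq 0$. The derivative bound (4) drops out of Fourier inversion:
\begin{align*}
|g^{(l)}(x)|\leq\frac{1}{2\pi}\int_{-2\pi/\delta}^{2\pi/\delta}|\xi|^{l}|\hat g(\xi)|\,d\xi\leq\frac{\|\hat g\|_\infty}{2\pi}\cdot\frac{2(2\pi/\delta)^{l+1}}{l+1}=\frac{1}{\pi(l+1)}\left(\frac{2\pi}{\delta}\right)^{l+1},
\end{align*}
and the sharp constant matches because we arranged $\|\hat g\|_\infty\leq 1$.

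The substantive work lies in the quantitative halves of (2) and (3), namely the pointwise tail bound $g(x)\leq\delta$ for $|x|>1$ and $g_{N+1}(x)\leq\delta$ for $|x|\leq N$. Repeated integration by parts in the Fourier integral yields, for each $M\geq 0$, a decay estimate of the form $|g(x)|\ll_{M}(2\pi/\delta)^{M}|x|^{-M}$ with implied constant controlled by $\|\hat g^{(M)}\|_1$; choosing $M=M(\delta)$ sufficiently large then produces (2). For the tail sum in (3), when $|x|\leq N$ one bounds $\sum_{|j|>N}g(x-j)$ by a series which is geometric in $|j|$ and is dominated by a multiple of its worst term, which itself falls below $\delta$ once $M$ is large enough. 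The main obstacle is the simultaneous balancing act: $\hat g$ must be smooth enough (so that $g$ decays fast enough for (2) and (3)), its Fourier support must remain inside $[-2\pi/\delta,\,2\pi/\delta]$ with $\|\hat g\|_\infty\leq 1$ (for the sharp constant in (4)), and the values $\hat g(2\pi k)$ at the $O(1/\delta)$ nonzero integer multiples of $2\pi$ inside the support window must be exactly zero. A Fej\'er/Selberg-type construction, in which $\hat g$ is a hat-like function raised to a high power and then adjusted by a small finite interpolation to annihilate those discrete values, suffices; tracking the constants explicitly is the only real calculation.
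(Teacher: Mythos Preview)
The paper does not give its own proof of this lemma; it is simply quoted as \cite[Approximation Result 1]{Harper23}, so there is no in-paper argument to compare against. Your band-limited Fourier framework is the right one: the derivation of (4) from $\mathrm{supp}\,\hat g\subset[-2\pi/\delta,2\pi/\delta]$ together with $\|\hat g\|_\infty\le 1$ is clean and matches the stated constant exactly, and defining $g_{N+1}$ as the leftover after forcing $\sum_{j\in\mathbb Z}g(x-j)=1$ via $\hat g(2\pi k)=\mathbf 1_{k=0}$ is the natural route to (1) and (3).

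The gap is in reconciling $g\ge 0$ with the vanishing conditions $\hat g(2\pi k)=0$ for the $O(1/\delta)$ nonzero integers $k$ inside the band. You propose $g=h^2$ for non-negativity and then ``a small finite interpolation'' in $\hat g$ to annihilate those values. These two moves conflict: an additive correction to $\hat g$ produces a sign-changing additive correction to $g$, and since your base function (a high power of $\mathrm{sinc}$) has zeros on a lattice, an arbitrarily small sign-changing perturbation makes $g$ negative near those zeros. Keeping $g=h^2$ throughout instead turns the vanishing conditions into $O(1/\delta)$ \emph{quadratic} constraints on $\hat h$, which is no longer a small linear interpolation and which you do not address. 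A cleaner construction sidesteps the difficulty entirely: take $g=\mathbf 1_{[-1/2,1/2]}*\phi$ with $\phi(u)=\delta^{-1}\mathrm{sinc}^2(\pi u/\delta)$. Then $g\ge 0$ as a convolution of non-negative functions; $\hat g(\xi)=\mathrm{sinc}(\xi/2)\,(1-\delta|\xi|/(2\pi))_+$ is supported in $[-2\pi/\delta,2\pi/\delta]$ with $\|\hat g\|_\infty=1$, so (4) holds with the exact constant; and the factor $\mathrm{sinc}(\xi/2)$ already vanishes at every $\xi=2\pi k$ with $k\neq 0$, giving $\sum_j g(x-j)\equiv 1$ for free. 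The elementary tail bound $g(u)\le \delta/\bigl(\pi^2(u^2-1/4)\bigr)$ for $|u|>1$ then yields (2), and summing it over $|j|>N$ gives (3), in both cases with a constant strictly below $1$.
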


\subsection{Mean Value Estimations} 
 
 Throughout the paper, unless mentioning otherwise, we denote $(h(p))_{p \text{ prime}}$ a sequence of independent random variables distributed uniformly on the unit circle in $\comc$ and we denote $h(n)$  a Steinhaus random multiplicative function such that $h(n):=\prod_{p^a \| n}h(p)^a$ for all natural numbers $n$.  Therefore, $h$ is a random function taking values in the complex unit circle and completely multiplicative. We denote the expectation by $\mathbb{E}$, the probability measure by $\p$ and the indicator function by $\textbf{1}$. Let $\lfloor x \rfloor$ denote the largest integer not exceeding $x$ and $\lceil x \rceil$ denote the smallest integer exceeding $x$. We also denote throughout the paper by $\Echar$ the averaging over all Dirichlet characters mod $q$, so that for any function $f(\chi)$, 
   $$\Echar f := \frac{1}{q-1} \sum_{\chi \; \text{mod} \; q} f(\chi).$$ 
  
   In this section, we gather some mean value estimations concerning sums or Euler products involving with either Dirichlet characters or random variables. 
 Our first result is just \cite[Lemma 1]{Harper23}.
\begin{lemma}
\label{evenmoment}
   Let $x \geq 1$, and let $(c(n))_{n \leq x}$ be any complex numbers. Let $\mathcal{P}$ be any finite set of primes, let $\mathcal{Q}$ be any non-empty set consisting of some elements of $\mathcal{P}$ and squares of elements of $\mathcal{P}$, and write $U := \max\{m \in \mathcal{Q}\}$ . Finally, let $Q(\chi) := \sum_{m \in \mathcal{Q}} \frac{a(m) \chi(m)}{\sqrt{m}}$, where the $a(m)$ are any complex numbers.

Then for any natural number $l$ such that $x U^l < q$, we have
$$ \Echar \Biggl|\sum_{n \leq x} c(n) \chi(n) \Biggr|^{2} |Q(\chi)|^{2l} \ll \Biggl(\sum_{n \leq x} \tilde{d}(n) |c(n)|^2 \Biggr) \cdot (l !) \Biggl( 2 \sum_{m \in \mathcal{Q}} \frac{v_m |a(m)|^2}{m} \Biggr)^{l} , $$
where $\tilde{d}(n) := \sum_{d|n} \textbf{1}_{p|d \Rightarrow p \in \mathcal{P}}$, and $v_m$ is $1$ if $m$ is a prime and $6$ if $m$ is the square of a prime.
\end{lemma}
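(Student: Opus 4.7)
The plan is to prove Lemma \ref{evenmoment} by expanding the $2l$-th power of $Q(\chi)$, applying character orthogonality modulo the prime $q$, and carefully estimating the resulting combinatorial sum subject to an explicit Diophantine equation.

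First, I expand
\[
|Q(\chi)|^{2l} \;=\; \sum_{\vec m,\vec m' \in \mathcal{Q}^{l}} \frac{\prod_i a(m_i) \prod_j \overline{a(m'_j)}}{\sqrt{M M'}}\,\chi(M)\overline{\chi(M')},
\]
with $M = \prod_i m_i$, $M' = \prod_j m'_j$, each at most $U^l$. Multiplying by $|S(\chi)|^2 = \sum_{n_1, n_2 \leq x} c(n_1)\overline{c(n_2)} \chi(n_1)\overline{\chi(n_2)}$ and averaging over $\chi \bmod q$, the hypothesis $xU^l < q$ ensures that $n_1 M$ and $n_2 M'$ lie in $\{1,\ldots,q-1\}$ and hence are coprime to the prime $q$. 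Character orthogonality then collapses the condition $n_1 M \equiv n_2 M' \pmod q$ to the equation $n_1 M = n_2 M'$, yielding
\[
\Echar |S(\chi)|^{2}|Q(\chi)|^{2l} \;=\; \sum_{\substack{n_1,n_2\leq x,\;\vec m,\vec m'\in \mathcal{Q}^{l}\\ n_1 M = n_2 M'}} c(n_1)\overline{c(n_2)}\,\frac{\prod_i a(m_i)\prod_j \overline{a(m'_j)}}{\sqrt{MM'}}.
\]

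Applying $|c(n_1)\overline{c(n_2)}| \leq \tfrac{1}{2}(|c(n_1)|^2 + |c(n_2)|^2)$ and exploiting the symmetry $(n_1,\vec m)\leftrightarrow(n_2,\vec m')$ reduces the problem to bounding the absolute-value sum with $|c(n_1)|^2$ and $|a(m_i)a(m'_j)|$ throughout. The central combinatorial step is to classify each pair $(\vec m,\vec m')$ by a \emph{maximal matching}: a bijection $\sigma\colon A\to B$ between subsets $A,B\subset\{1,\ldots,l\}$ of common size $j$ with $m_i = m'_{\sigma(i)}$ for each $i \in A$. Each matched pair contributes a factor $|a(m_i)|^2/m_i$, and a free sum over the matched $m$'s produces $\bigl(\sum_{m}|a(m)|^2/m\bigr)^{\!j}$. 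For the unmatched entries, set $d_1 = \prod_{i\notin A} m_i$ and $d_2 = \prod_{j\notin B} m'_j$; the equation becomes $n_1 d_1 = n_2 d_2$, and after dividing by $\gcd(d_1, d_2)$ this forces a nontrivial divisor $d_2/\gcd(d_1, d_2)$ of $n_1$ whose prime factors all lie in $\mathcal{P}$ --- precisely what is counted by $\tilde{d}(n_1)$. Unmatched pairs are handled by AM-GM, $|a(m_i)a(m'_j)| \leq \tfrac{1}{2}(|a(m_i)|^2 + |a(m'_j)|^2)$; the outer factor $2$ in $(2\sum_m v_m|a(m)|^2/m)^l$ absorbs this together with the earlier symmetrization, while the weight $v_m = 6$ at $m = p^2$ accommodates the several combinatorially distinct ways in which a prime-square factor can emerge in the matching (as a direct $(p^2,p^2)$ pair, or as $p^2$ in one tuple matched against a product $p\cdot p$ drawn from two indices in the opposite tuple, in each possible ordering), each independently bounded by $|a(p^2)|^2/p^2$ or $|a(p)|^2/p$ via a further AM-GM step.

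The \textbf{main obstacle} is collecting the matching contributions into a single factor $l!$ rather than the naive $(l!)^2$ that would arise from ordering $\vec m$ and $\vec m'$ independently. For each matching size $j$, the number of matchings is $\binom{l}{j}^{\!2}\cdot j!$, and the elementary inequality
\[
\sum_{j=0}^{l} \binom{l}{j}^{\!2} j!\, X^j Y^{l-j} \;\leq\; l!\,(X+Y)^l \qquad (X,Y \geq 0),
\]
which follows from $(l-j)! \geq 1$ after comparing the $X^j Y^{l-j}$-coefficients on both sides, collapses the binomial double count into a single factorial. Taking $X = \sum_m|a(m)|^2/m$ and $Y$ an appropriate upper bound for the unmatched contribution with $X + Y \leq 2\sum_m v_m|a(m)|^2/m$ yields the factor $l!\,(2\sum_m v_m|a(m)|^2/m)^l$. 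Summing the resulting bound against $\tilde d(n_1)|c(n_1)|^2$ over $n_1 \leq x$ completes the proof of the lemma.
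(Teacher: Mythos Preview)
The paper does not actually prove this lemma; it is quoted verbatim as \cite[Lemma 1]{Harper23} and invoked as a black box, so there is no in-paper argument against which to compare your proposal.

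Your opening moves are correct and standard: expand $|Q|^{2l}$, use orthogonality modulo the prime $q$ together with $xU^l<q$ to convert the congruence $n_1M\equiv n_2M'\pmod q$ into the exact equation $n_1M=n_2M'$, and symmetrise via $|c(n_1)\overline{c(n_2)}|\le\tfrac12(|c(n_1)|^2+|c(n_2)|^2)$. The auxiliary inequality $\sum_{j}\binom{l}{j}^2 j!\,X^jY^{l-j}\le l!(X+Y)^l$ is also valid. The substantive gap is in your treatment of the unmatched indices. First, a \emph{maximal} matching between $\vec m$ and $\vec m'$ is not uniquely determined when entries repeat (already $\vec m=\vec m'=(p,p)$ admits two full matchings), so ``classify each pair by its maximal matching'' is not a well-defined partition of $\mathcal Q^{l}\times\mathcal Q^{l}$. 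Second, and more seriously, the AM--GM step you describe for the unmatched entries is not well-posed: unmatched indices on the $\vec m$ side are not canonically paired with indices on the $\vec m'$ side, and any artificial pairing produces terms of the shape $|a(m_i)|^2/\sqrt{m_im'_j}$ rather than $|a(m_i)|^2/m_i$, so the weights do not collapse into a clean factor $Y^{l-j}$. Your claim that the residual $Y$ satisfies $X+Y\le 2\sum_m v_m|a(m)|^2/m$, and that the constant $v_{p^2}=6$ absorbs the several $p^2\leftrightarrow p\cdot p$ interactions, is exactly the combinatorial heart of the lemma, and it is asserted rather than demonstrated. Some explicit mechanism --- for instance, grouping tuples by the integer value $M=\prod_i m_i$, bounding the $l$-fold convolution coefficients $b_l(M)=\sum_{\prod m_i=M}\prod_i a(m_i)$ pointwise, and then tracking how the $\mathcal P$-smooth divisor of $n_1$ forced by $n_1M=n_2M'$ interacts with these coefficients --- is needed before the bound $l!\bigl(2\sum_m v_m|a(m)|^2/m\bigr)^l$ can legitimately be claimed.
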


 Now, Lemma \ref{evenmoment} allows us to deduce the following result, which is an extension of \cite[Proposition 1]{Harper23}.
\begin{prop}
\label{condexpprop}
Let the functions $g_j, \tilde{d}(n)$ and the associated parameters $N, \delta$, be as in Lemma \ref{apres}. Let $h(n)$ denote a Steinhaus random multiplicative function. Suppose that $x \geq 1$, and let $(c(n))_{n \leq x}$ be any complex numbers such that 
\begin{align}
\label{cncond}
 \sum_{n \leq x} \tilde{d}(n) |c(n)|^2 \ll x\log P.
\end{align} 
 Let $P$ be large, and let $Y \in \N$. Let also $c_0>0$ be a constant. Let $(a_{j}(p^{l}))_{p \leq P}, 1 \leq j \leq Y, l=1,2$ be any sequence of complex numbers such that $|a_{j}(p^{l})| \leq c_0$ for all  $1 \leq j \leq Y, l=1,2$.  Suppose that $x P^{400(c^2_0+1)(Y/\delta)^2 \log(N\log P)} < q$, then we have for any indices $-N \leq j(1), j(2), ..., j(Y) \leq N+1$, 
\begin{align}
\label{Erel}
\begin{split}
 & \Echar \prod_{i=1}^{Y} g_{j(i)}(\Re(\sum_{p \leq P} \frac{a_{i}(p) \chi(p)}{\sqrt{p}} + \frac{a_{i}(p^2) \chi(p^2)}{p})) \Biggl|\sum_{n \leq x} c(n) \chi(n) \Biggr|^2  \\
= & \E \prod_{i=1}^{Y} g_{j(i)}(\Re(\sum_{p \leq P} \frac{a_{i}(p) h(p)}{\sqrt{p}} + \frac{a_{i}(p^2) h(p^2)}{p})) \Biggl|\sum_{n \leq x} c(n) h(n) \Biggr|^2 + O\left(\frac{x}{(N \log P)^{Y(1/\delta)^2}} \right).
\end{split}
\end{align} 
\end{prop}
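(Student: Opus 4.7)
The plan is to follow the strategy of the proof of Proposition~1 of \cite{Harper23}, extending it to accommodate the prime-square contributions $a_i(p^2)\chi(p^2)/p$ inside the smooth weights and the general coefficient bound $|a_i(p^l)|\le c_0$. The central idea is that, after replacing each $g_{j(i)}$ by a polynomial approximation of sufficiently high degree, the left-hand side of \eqref{Erel} becomes a linear combination of averages of the shape $\Echar \prod_{p\le P}\chi(p)^{r_p}\chi(p^2)^{s_p}\cdot|\sum_n c(n)\chi(n)|^2$; for frequencies supported on integers below $q$, the orthogonality of Dirichlet characters modulo $q$ exactly matches the corresponding Steinhaus expectation, and what remains to control is only the approximation error.

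First I approximate each $g_{j(i)}$ by its Taylor polynomial $T_{K,j(i)}$ of degree $K$ around the origin, taking $K$ of order $(c_0^2+1)(Y/\delta^2)\log(N\log P)$. Lemma~\ref{apres}(iv) yields the uniform Taylor remainder bound
\[
|g_{j(i)}(x)-T_{K,j(i)}(x)| \le \tfrac{1}{\pi(K+2)}\bigl(\tfrac{2\pi}{\delta}\bigr)^{K+2}|x|^{K+1}.
\]
Next I localise on the event $\mathcal{E}$ on which every Dirichlet polynomial $\Re(\sum_{p\le P}(a_i(p)\chi(p)/\sqrt{p}+a_i(p^2)\chi(p^2)/p))$ has absolute value at most some threshold $R\asymp c_0\sqrt{\log P}$ (and analogously on the Steinhaus side). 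On $\mathcal{E}$ I substitute $T_{K,j(i)}$ for $g_{j(i)}$; expanding the $Y$-fold product yields a polynomial in the $\chi(p)/\sqrt{p}$ and $\chi(p^2)/p$ of total degree at most $KY$, whose monomials are supported on integers of size at most $P^{2KY}$. Under the hypothesis $xP^{2KY}<q$, the matching orthogonality relations for Dirichlet characters mod $q$ and for Steinhaus variables transform every such monomial contribution on the character side into its exact Steinhaus counterpart. The two sides of \eqref{Erel} therefore differ only by the Taylor error on $\mathcal{E}$ and by the contributions from the complementary event $\mathcal{E}^c$. The former is bounded using the hypothesis \eqref{cncond} together with the explicit remainder above, while the tail on $\mathcal{E}^c$ is handled by Cauchy--Schwarz combined with Lemma~\ref{evenmoment} applied to a high moment $|Q(\chi)|^{2l}$ with $l\asymp R^2/\log P$, and its obvious analogue on the Steinhaus side.

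The hard part will be the joint calibration of the parameters $K$, $R$ and $l$ so that all three error contributions --- Taylor truncation on $\mathcal{E}$, the tail bound on $\mathcal{E}^c$, and the support condition $xP^{2KY}<q$ --- are simultaneously of the desired size $O(x/(N\log P)^{Y/\delta^2})$. The presence of the $\chi(p^2)/p$ terms doubles the effective $p$-degree per occurrence (producing $P^2$ rather than $P$ each time $a_i(p^2)$ is used), which is precisely why the hypothesis on $q$ carries the exponent $(Y/\delta)^2\log(N\log P)$ with the explicit prefactor $400(c_0^2+1)$; tracking the dependence on $c_0$ through the Taylor remainder and through the moment constants in Lemma~\ref{evenmoment} (whose weight $v_m=6$ for prime squares also enters) is the most delicate piece of bookkeeping, but introduces no conceptual novelty beyond the approach of \cite{Harper23}.
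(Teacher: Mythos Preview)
Your proposal follows the same core mechanism as the paper's proof --- Taylor-expand each $g_{j(i)}$ to a polynomial, match the polynomial part exactly via orthogonality (both the character side and the Steinhaus side detect the same diagonal once the support lies below $q$), and bound the remainder through Lemma~\ref{evenmoment}. However, your detour through the localisation event $\mathcal{E}$ is unnecessary and the paper avoids it entirely. The key observation you are missing is that the Taylor remainder already has the shape
\[
|r_{j}(x)| \;\ll\; \frac{N}{\delta S(2S)!}\Bigl(\frac{2\pi|x|}{\delta}\Bigr)^{2S},
\]
which is itself a $(2S)$-th power of the argument. Hence Lemma~\ref{evenmoment} applies \emph{directly} to each remainder factor (with $l=S$ and $Q(\chi)$ the relevant prime sum), with no need to first restrict to $|Q|\le R$ and then handle the complement by a separate high-moment bound. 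The paper therefore has a single parameter $S=100c_0^2 Y\lfloor(1/\delta)^2\log(N\log P)\rfloor$ to calibrate rather than your three parameters $K,R,l$.

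Your route also introduces a genuine technical wrinkle that you do not address: orthogonality cannot be invoked ``on $\mathcal{E}$'', because the indicator $\mathbf{1}_{\mathcal{E}}$ depends on $\chi$ in a non-polynomial way. To make your argument rigorous you would have to apply orthogonality \emph{globally} to the polynomial $\prod_i T_{K,j(i)}$ and then separately bound $\Echar \mathbf{1}_{\mathcal{E}^c}\prod_i T_{K,j(i)}(\cdot)\,|\sum c(n)\chi(n)|^2$, where the $T_{K,j(i)}$ are unbounded off $\mathcal{E}$. This is doable (expand each $T_{K,j(i)}$ and feed the resulting monomials, together with $\mathbf{1}_{\mathcal{E}^c}\le\sum_i(|Q_i|/R)^{2l}$, into Lemma~\ref{evenmoment}), but it reproduces exactly the moment computation that the paper performs in one step without localisation. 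In short: drop $\mathcal{E}$, keep only your polynomial approximation and the remainder-via-Lemma~\ref{evenmoment} step, and you recover the paper's proof.
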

\begin{proof}
Our proof is a variation of the proof of \cite[Proposition 1]{Harper23}. As shown there, we may apply property (4) of Lemma \ref{apres} to write $g_{j}(x) = \tilde{g}_{j}(x) + r_{j}(x)$ such that $\tilde{g}_{j}(\cdot)$ is a polynomial of degree $2S-1$ for some parameter $S$, and that $|r_{j}(x)| \ll \frac{N |2\pi x/\delta|^{2S}}{\delta S (2S)!}$. It is also shown there that when $x P^{4SY} < q$, we have
\begin{align}
\label{Maintermmatch}
\begin{split}
& \Echar \prod_{i=1}^{Y} \tilde{g}_{j(i)}(\Re(\sum_{p \leq P} \frac{a_{i}(p) \chi(p)}{\sqrt{p}} + \frac{a_{i}(p^2) \chi(p^2)}{p})) \Biggl|\sum_{n \leq x} c(n) \chi(n) \Biggr|^2  \\
= & \E \prod_{i=1}^{Y} \tilde{g}_{j(i)}(\Re( \sum_{p \leq P} \frac{a_{i}(p) h(p)}{\sqrt{p}} + \frac{a_{i}(p^2) h(p^2)}{p})) \Biggl|\sum_{n \leq x} c(n) h(n) \Biggr|^2. 
\end{split}
\end{align} 

  Further, upon using Lemma \ref{evenmoment},  it is shown in the proof of \cite[Proposition 1]{Harper23} that the contribution from all of the remainders $r_{j(i)}(\cdot)$ to the left-hand side expression in \eqref{Erel} is
\begin{align}
\label{Remainderbound}
\begin{split}
 \ll & \Biggl(\sum_{n \leq x} \tilde{d}(n) |c(n)|^2 \Biggr) \cdot \sum_{i=1}^{Y} \frac{N}{\delta} \sqrt{\frac{i}{S}} (\frac{e (\pi/\delta)^2 i (2\max_{1 \leq l \leq i}\sum_{p \leq P} (\frac{|a_{l}(p)|^2 }{p} + \frac{6|a_{l}(p^2)|^2}{p^2}))}{S})^S   \\
& \cdot \Biggl( 1 + O\Biggl(\frac{N}{\delta S} (\frac{e (\pi/\delta)^2 i (2\max_{1 \leq l \leq i}\sum_{p \leq P} (\frac{|a_{l}(p)|^2 }{p} + \frac{6|a_{l}(p^2)|^2}{p^2}))}{S})^S \Biggr) \Biggr)^{i-1} \\
\ll & x\log P \cdot \sum_{i=1}^{Y} \frac{N}{\delta} \sqrt{\frac{i}{S}} (\frac{e (\pi/\delta)^2 i (2\max_{1 \leq l \leq i}\sum_{p \leq P} (\frac{|a_{l}(p)|^2 }{p} + \frac{6|a_{l}(p^2)|^2}{p^2}))}{S})^S   \\
& \cdot \Biggl( 1 + O\Biggl(\frac{N}{\delta S} (\frac{e (\pi/\delta)^2 i (2\max_{1 \leq l \leq i}\sum_{p \leq P} (\frac{|a_{l}(p)|^2 }{p} + \frac{6|a_{l}(p^2)|^2}{p^2}))}{S})^S \Biggr) \Biggr)^{i-1},
\end{split}
\end{align} 
  where the last estimation above follows from \eqref{cncond}.

 Note also that as $|a_{i}(p)|, |a_{i}(p^2)| \leq c_0$, it follows from \eqref{merten} that
$$2\max_{1 \leq l \leq i} \sum_{p \leq P} (\frac{|a_{i}(p)|^2 }{p} + \frac{6|a_{i}(p^2)|^2}{p^2}) \leq 2c^2_0\sum_{p \leq P} \frac{1}{p} + O(1) = 2c^2_0\log\log P + O(1).$$ 

  We deduce from \eqref{Remainderbound} and the above that the contribution from all of the remainders $r_{j(i)}(\cdot)$ to the left-hand side expression in \eqref{Erel} is
\begin{align}
\label{Remainderbound1}
\begin{split}
 \ll & x\log P \cdot \sum_{i=1}^{Y} \frac{N}{\delta} \sqrt{\frac{i}{S}} (\frac{e (\pi/\delta)^2 i (2c^2_0\log\log P + O(1))}{S})^S  \cdot \Biggl( 1 + O\Biggl(\frac{N}{\delta S} (\frac{e (\pi/\delta)^2 i (2c^2_0\log\log P + O(1))}{S})^S \Biggr) \Biggr)^{i-1} . 
\end{split}
\end{align} 

Now we set $S = 100c^2_0Y \lfloor (1/\delta)^2 \log(N\log P) \rfloor$ to see that the condition $x P^{4SY} < q$ is satisfied by our assumption that $x P^{400(c^2_0+1)(Y/\delta)^2 \log(N\log P)}  < q$. Moreover, the expression in \eqref{Remainderbound1} is
$$ \ll x\log P \cdot \sum_{i=1}^{Y} \frac{N}{\delta} \sqrt{\frac{i}{S}} 0.6^S \left( 1 + O(\frac{N}{\delta S} 0.6^S ) \right)^{i-1} \ll x\log P \cdot NY \cdot 0.6^S \ll \frac{x}{(N \log P)^{Y(1/\delta)^2}}.$$

  Note that as pointed out in the proof of \cite[Proposition 1]{Harper23}, one has the same overall bound for the contribution from the remainders $r_{j(i)}(\cdot)$ to the first expression on the right-hand side of \eqref{Erel}. This together with \eqref{Maintermmatch} now leads to the desired expression in \eqref{Erel} and therefore completes the proof of the proposition.
\end{proof}

 We now set $x=1$ and $c(1)=1$ in Proposition \ref{condexpprop} to deduce the following special case of it. 
\begin{prop}\label{boxprobprop}
 Let the functions $g_j$ and the associated parameters $N, \delta$, be as in Lemma \ref{apres}. Let $h(n)$ denote a Steinhaus random multiplicative function.
Let $P$ be large, and let $Y \in \N$. Let also $c_0>0$ be a constant. Let $(a_{j}(p^{l}))_{p \leq P}, 1 \leq j \leq Y, l=1,2$ be any sequence of complex numbers such that $|a_{j}(p^{l})| \leq c_0$ for all  $1 \leq j \leq Y, l=1,2$.  Suppose that $x P^{400(c^2_0+1)(Y/\delta)^2 \log(N\log P)} < q$, then we have for any indices $-N \leq j(1), j(2), ..., j(Y) \leq N+1$, 
\begin{eqnarray}
\Echar \prod_{i=1}^{Y} g_{j(i)}(\Re(\sum_{p \leq P} \frac{a_{i}(p) \chi(p)}{\sqrt{p}} + \frac{a_{i}(p^2) \chi(p^2)}{p})) & = & \E \prod_{i=1}^{Y} g_{j(i)}(\Re(\sum_{p \leq P} \frac{a_{i}(p) h(p)}{\sqrt{p}} + \frac{a_{i}(p^2) h(p^2)}{p})) + \nonumber \\
&& + O\left(\frac{1}{(N \log P)^{Y(1/\delta)^2}} \right) . \nonumber
\end{eqnarray}
\end{prop}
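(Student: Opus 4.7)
The plan is to obtain Proposition \ref{boxprobprop} directly as the special case of Proposition \ref{condexpprop} corresponding to $x=1$ and $c(1)=1$. First I would verify the hypothesis \eqref{cncond}: with these choices the sum $\sum_{n\le x}\tilde d(n)|c(n)|^2$ reduces to $\tilde d(1)=1$, which is trivially bounded by $\log P$ for any large $P$, so the required estimate $\sum_{n\le x}\tilde d(n)|c(n)|^2\ll x\log P$ holds. The size condition $xP^{400(c_0^2+1)(Y/\delta)^2\log(N\log P)}<q$ is inherited verbatim, and with $x=1$ it coincides with the size condition assumed in Proposition \ref{boxprobprop}.

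Having checked the hypotheses, I would then substitute $x=1$ and $c(1)=1$ into the conclusion \eqref{Erel} of Proposition \ref{condexpprop}. The Dirichlet sum collapses to $\sum_{n\le 1}c(n)\chi(n)=\chi(1)=1$ for every $\chi$, and the Steinhaus sum similarly collapses to $h(1)=1$. Consequently the factor $|\sum_{n\le x}c(n)\chi(n)|^2$ on the left, and its random-function analogue on the right, both equal $1$ identically and disappear from both sides. The error term $O(x/(N\log P)^{Y(1/\delta)^2})$ reduces to $O(1/(N\log P)^{Y(1/\delta)^2})$, matching the claimed bound exactly.

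In this sense there is no genuine obstacle to overcome; the statement is recorded separately only for the convenience of later use in the proof of Theorem \ref{lowerboundsfixedmodmean}, and the only point requiring any attention is the verification that the trivial weight $c(1)=1$ indeed satisfies the technical input condition \eqref{cncond} and that the size hypothesis matches up after setting $x=1$.
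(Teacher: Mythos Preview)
Your proposal is correct and matches the paper's own approach exactly: the paper states just before Proposition \ref{boxprobprop} that one simply sets $x=1$ and $c(1)=1$ in Proposition \ref{condexpprop} to deduce it as a special case. Your verification of \eqref{cncond} via $\tilde d(1)=1$ and the collapse of the squared sums to $1$ is precisely what is needed.
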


  Our next result treats the expectation of certain random Euler products.
\begin{lemma}
\label{eulerproduct}
    Let $h(n)$ be a Steinhaus random multiplicative function, $\alpha,\beta,\sigma_1,\sigma_2\geq 0$ and $t_1,t_2 \in \rear$.  Suppose that $100(1+\max(\alpha^2,\beta^2))\leq z<y$. Then
\begin{align}
\label{Eest0}
\begin{split}
        &  \mathbb{E}\prod_{z\leq p\leq y} \Big|1-\frac{\alpha_ph(p)}{p^{1/2+\sigma_1+it_1}}\Big|^{-2\alpha}\Big|1-\frac{\beta_ph(p)}{p^{1/2+\sigma_1+it_1}}\Big|^{-2\alpha}
\Big|1-\frac{\alpha_ph(p)}{p^{1/2+\sigma_2+it_2}}\Big|^{-2\beta}\Big|1-\frac{\beta_ph(p)}{p^{1/2+\sigma_2+it_2}}\Big|^{-2\beta} \\
        =&\exp\bigg(\sum_{p\leq y}\frac{\alpha^2\lambda^2(p)}{p^{1+2\sigma_1} }+\frac{\beta^2\lambda^2(p)}{p^{1+2\sigma_2} }+\frac{2\alpha\beta \lambda^2(p)\cos((t_2-t_1)\log p)}{p^{1+\sigma_1+\sigma_2 } }+O\Big(\frac{\max(\alpha,\alpha^3,\beta,\beta^3)}{z^{1/2}}\Big)\bigg).
\end{split}
\end{align}

   Moreover, we have for any real $t$ and $u$, any real $400(1 + u^2) \leq x \leq y$, and any real $\sigma \geq - 1/\log y$, 
\begin{align}
\label{Eest}
\begin{split}
 & \E \prod_{x < p \leq y} \Big|1-\frac{\alpha_ph(p)}{p^{1/2+\sigma}}\Big|^{-2}\Big|1-\frac{\beta_ph(p)}{p^{1/2+\sigma}}\Big|^{-2}
\Big|1-\frac{\alpha_ph(p)}{p^{1/2+\sigma+it}}\Big|^{-iu}\Big|1-\frac{\beta_ph(p)}{p^{1/2+\sigma+it}}\Big|^{-iu}\\
=&  \exp\Big(\sum_{x < p \leq y} \frac{1 + iu\cos(t\log p) - u^{2}/4}{p^{1 + 2\sigma}} + T(u)\Big), 
\end{split}
\end{align}
where $T(u) = T_{x,y,\sigma,t}(u)$ satisfies $|T(u)| \ll \frac{1 + |u|^3}{\sqrt{x} \log x}$ for any $u$, and $|T'(u)| \ll \frac{1}{\sqrt{x} \log x}$ for $|u| \leq 1$.
\end{lemma}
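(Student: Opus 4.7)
The plan is to exploit the independence of $(h(p))_p$ across distinct primes, which gives $\mathbb{E}\prod_{p} F_p = \prod_{p} \mathbb{E}[F_p]$, thereby reducing both identities to a local expansion at each prime followed by a prime sum.

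For \eqref{Eest0}, fix a prime $p$ and set $z_j = p^{-(1/2+\sigma_j+it_j)}$ for $j=1,2$. Writing $|1-w|^{-2\gamma} = (1-w)^{-\gamma}(1-\bar w)^{-\gamma}$ and expanding each factor through the generalised binomial series, the local factor becomes a quadruple power series in $h(p),\bar h(p)$; the hypothesis $100(1+\max(\alpha^2,\beta^2))\leq z$ guarantees absolute convergence and tames the binomial coefficients. The Steinhaus orthogonality $\mathbb{E}[h(p)^a \bar h(p)^b] = \delta_{a,b}$ kills every off-diagonal term, and expanding $\log(1+x) = x + O(x^2)$ of the resulting local expectation isolates the contributions of size $|z_j|^2$. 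Using $|\alpha_p|=|\beta_p|=1$, $\alpha_p\beta_p=1$ (so that $\bar\alpha_p = \beta_p$), and $\alpha_p^2+\beta_p^2+2 = \lambda^2(p)$, the diagonal pairings from the first pair of factors simplify to $\alpha^2\lambda^2(p)/p^{1+2\sigma_1}$; symmetrically the second pair produces $\beta^2\lambda^2(p)/p^{1+2\sigma_2}$; and the mixed cross pairings between the two shifts deliver $2\alpha\beta\,\lambda^2(p)\,\Re(z_1\bar z_2) = 2\alpha\beta\,\lambda^2(p)\cos((t_2-t_1)\log p)/p^{1+\sigma_1+\sigma_2}$. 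All higher-order local corrections at prime $p$ are bounded by $O(\max(\alpha,\alpha^3,\beta,\beta^3)/p^{3/2})$; summing over $p\geq z$ via $\sum_{p\geq z} p^{-3/2}\ll z^{-1/2}$ gives the advertised error.

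For \eqref{Eest} the strategy is the same, but the second pair of factors now carries the nonreal exponent $-iu$. I would use $|1-w|^{-iu} = \exp(-iu\,\Re\log(1-w))$ and expand the exponential as a double series in $w,\bar w$, then isolate surviving terms via the Steinhaus orthogonality as before. Tracked with the coefficient $iu$ in place of the earlier $\beta$, the analysis of the order-$|z|^2$ terms produces the coefficients $1$, $iu\cos(t\log p)$, and $-u^2/4$ against $p^{-(1+2\sigma)}$; the cosine arises naturally from the cross pairing of an $h$-term from one shift against a $\bar h$-term from the other shift, producing $e^{it\log p}+e^{-it\log p}$. All remaining contributions at prime $p$ are collected into $T(u)$. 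The bound $|T(u)|\ll (1+|u|^3)/(\sqrt{x}\log x)$ follows by grouping the quartic-or-higher cross terms, whose worst $u$-dependence is cubic (three factors of $iu$ from a triple product in the expansion of the exponential), and summing $\sum_{p>x} p^{-3/2}\ll x^{-1/2}/\log x$. For the derivative bound $|T'(u)|\ll 1/(\sqrt{x}\log x)$ on $|u|\leq 1$, I would differentiate the series defining $T(u)$ termwise: the range $|u|\leq 1$ kills the cubic-in-$u$ obstruction, and differentiation costs only a bounded factor.

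The main obstacle is the clean bookkeeping of error terms, in particular verifying that every higher-than-quadratic local correction really does collapse into a $p^{-3/2}$-sized residual with constants uniform over the stated ranges of $\alpha,\beta,\sigma_j,t_j,u,t$. The additional subtlety for \eqref{Eest} is that the nonreal exponent $-iu$ prevents direct use of nonnegativity-based binomial bounds, so one must track polynomial dependence on $|u|$ explicitly, which is precisely where the $(1+|u|^3)$-factor in the $T(u)$ bound originates.
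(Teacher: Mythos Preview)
Your proposal is correct and follows essentially the same route as the paper. For \eqref{Eest0} the paper simply cites \cite[Lemma~2.7]{G&Zhao25-11}, whose proof is exactly the binomial/orthogonality expansion you describe; for \eqref{Eest} the paper sets $R_p(t):=-\Re\log(1-\alpha_ph(p)p^{-1/2-\sigma-it})-\Re\log(1-\beta_ph(p)p^{-1/2-\sigma-it})$, writes the full local factor as $\exp(2R_p(0)+iuR_p(t))$, expands the exponential, and computes the second moments $\E R_p(t)^2=\lambda^2(p)/(2p^{1+2\sigma})+O(p^{-3/2})$ and $\E R_p(0)R_p(t)=\lambda^2(p)\cos(t\log p)/(2p^{1+2\sigma})+O(p^{-3/2})$, then defers the remaining bookkeeping to \cite[Lemma~1]{Harper20}. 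The only organisational difference is that the paper bundles all four factors into a single exponential before expanding, whereas you expand factor by factor; the identities $\bar\alpha_p=\beta_p$ and $\alpha_p^2+\beta_p^2+2=\lambda^2(p)$ make these equivalent, and your identification of the cubic-in-$u$ source of the $(1+|u|^3)$ bound on $T(u)$ matches what falls out of the $j\ge 3$ terms in the paper's exponential expansion.
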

\begin{proof}
  The expression given in \eqref{Eest0} is established in \cite[Lemma 2.7]{G&Zhao25-11}. The proof of \eqref{Eest} is similar and follows by an adaption of the proof of \cite[Lemma 1]{Harper20}. We set 
$$R_{p}(t) := -\Re\log(1 - \frac{\alpha_ph(p)}{p^{1/2+\sigma+it}})-\Re\log(1 - \frac{\beta_ph(p)}{p^{1/2+\sigma+it}})$$
 to see that 
\begin{align*}
\begin{split}
& \Big|1-\frac{\alpha_ph(p)}{p^{1/2+\sigma}}\Big|^{-2}\Big|1-\frac{\beta_ph(p)}{p^{1/2+\sigma}}\Big|^{-2}
\Big|1-\frac{\alpha_ph(p)}{p^{1/2+\sigma+it}}\Big|^{-iu}\Big|1-\frac{\beta_ph(p)}{p^{1/2+\sigma+it}}\Big|^{-iu} \\
 =& \exp\Big(2 R_{p}(0) +iu R_{p}(t) \Big) = 1 + \sum_{j=1}^{\infty} \frac{(2 R_{p}(0) +iu R_{p}(t))^j}{j!} . 
\end{split}
\end{align*}

 By the Taylor expansion, \eqref{alpha} and \eqref{alphalambdarel},  we see that 
\begin{align*}
\begin{split}
& R_{p}(t) = \sum_{j=1}^{\infty} \frac{(\alpha^j_p+\beta^j_p) \Re(h(p)p^{-it})^j}{j p^{j(1/2+\sigma)}} = \frac{\lambda(p)\Re h(p) p^{-it}}{p^{1/2+\sigma}} + O(\frac{1}{p^{1+2\sigma}}).
\end{split}
\end{align*}
 It follows that
\begin{align*}
\begin{split}
\E R_{p}(t)^2 =&  \E\frac{(\lambda(p)\Re h(p)p^{-it})^2}{p^{1 + 2\sigma}} + O(\frac{1}{p^{3/2+3\sigma}}) = \frac{\lambda^2(p)}{2 p^{1 + 2\sigma}} + O(\frac{1}{p^{3/2+3\sigma}}), \\
\E R_{p}(0) R_{p}(t) = & \E\frac{(\lambda(p)\Re h(p))(\lambda(p)\Re h(p)p^{-it})}{p^{1 + 2\sigma}} + O(\frac{1}{p^{3/2+3\sigma}}) = \frac{\lambda^2(p)\cos(t\log p)}{2 p^{1 + 2\sigma}} + O(\frac{1}{p^{3/2+3\sigma}}) .
\end{split}
\end{align*}
 Note also that by \eqref{alpha}, we have $|R_{p}(t)^j| \leq (\sum_{m=1}^{\infty} \frac{2}{mp^{m(1/2+\sigma)}})^j = \frac{2^j}{(p^{1/2+\sigma}-1)^j}$ for $j \geq 3$.

We then proceed as in the proof of \cite[Lemma 1]{Harper20} to see that the expression in \eqref{Eest} is valid. This completes the proof of the lemma. 
\end{proof}

 Note that as a special case of \eqref{Eest} and by a similar argument, we have for any $400 \leq x \leq y$ and any $\sigma \geq -1/\log y$, 
\begin{align}
\label{Eprodsquare}
\begin{split}
\E \prod_{x < p \leq y} \Big|1-\frac{\alpha_ph(p)}{p^{1/2+\sigma}}\Big|^{-2}\Big|1-\frac{\beta_ph(p)}{p^{1/2+\sigma}}\Big|^{-2} = & \exp\Big (\sum_{x < p \leq y} \frac{\lambda^2(p)}{p^{1 + 2\sigma}} + O(\frac{1}{\sqrt{x}\log x}) \Big ),  \\
\E \prod_{x < p \leq y} \Big|1-\frac{\alpha_ph(p)}{p^{1/2+\sigma}}\Big|^{2}\Big|1-\frac{\beta_ph(p)}{p^{1/2+\sigma}}\Big|^{2} = & \exp\Big (\sum_{x < p \leq y} \frac{\lambda^2(p)}{p^{1 + 2\sigma}} + O(\frac{1}{\sqrt{x}\log x}) \Big ). 
\end{split}
\end{align}
  
The following lemma is taken from \cite[Theorem 5.4]{MVa1}, which gives a version of Parseval’s identity for Dirichlet series.
\begin{lemma}
\label{parseval}
    Let $(a_n)_{n\geq 1}$ be a sequence of complex numbers and $F(s)=\sum_{n=1}^{\infty} a_nn^{-s}$ be the corresponding Dirichlet series. If $\sigma_c$ denotes its abscissa of convergence, then, for any $\sigma>\max(0,\sigma_c)$, we have
    \begin{equation*}
        \int\limits_{1}^{\infty} \frac{\big|\sum_{n\leq x}a_n\big|^2 }{x^{1+2\sigma }} \dif x=\frac{1}{2\pi}\int\limits_{-\infty}^{+\infty}\frac{|F(\sigma+it)|^2}{|\sigma+it|^2} \dif t.
    \end{equation*}
\end{lemma}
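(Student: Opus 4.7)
The plan is to reduce this to the standard Plancherel identity on $\mathbb{R}$ via Abel summation and a logarithmic change of variables. Set $A(x):=\sum_{n\leq x} a_n$. The first step is to observe that by Abel summation, for $\Re(s)=\sigma>\max(0,\sigma_c)$,
\begin{equation*}
  F(s)\;=\;s\int_{1}^{\infty}\frac{A(x)}{x^{s+1}}\,\dif x,
\end{equation*}
so that
\begin{equation*}
  \frac{F(\sigma+it)}{\sigma+it}\;=\;\int_{1}^{\infty} A(x)\,x^{-\sigma-it-1}\,\dif x.
\end{equation*}
Convergence of this integral, and the validity of the partial-summation step, is guaranteed by the hypothesis $\sigma>\max(0,\sigma_c)$: since $\sigma>\sigma_c$, standard bounds on partial sums give $A(x)\ll x^{\sigma-\varepsilon}$ for some $\varepsilon>0$, which also ensures the $L^2$ integrability I will need below.

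Next I would substitute $x=e^u$, $\dif x=e^u\,\dif u$, so the integral becomes
\begin{equation*}
  \frac{F(\sigma+it)}{\sigma+it}\;=\;\int_{0}^{\infty} A(e^u)\,e^{-\sigma u}\,e^{-itu}\,\dif u.
\end{equation*}
Define $g:\mr\to\mc$ by $g(u):=A(e^u)e^{-\sigma u}$ for $u\geq 0$ and $g(u):=0$ for $u<0$. The estimate $A(x)\ll x^{\sigma-\varepsilon}$ together with $\sigma>0$ shows that $g\in L^{2}(\mr)$, and the identity above says that the Fourier transform of $g$ (in the convention $\widehat g(t)=\int g(u)e^{-itu}\,\dif u$) equals $F(\sigma+it)/(\sigma+it)$.

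The final step is to apply Plancherel's theorem in this convention, namely $\int_{-\infty}^{\infty}|g(u)|^2\,\dif u=\tfrac{1}{2\pi}\int_{-\infty}^{\infty}|\widehat g(t)|^2\,\dif t$. The right-hand side is immediately the claimed spectral integral. For the left-hand side, undoing the change of variables gives
\begin{equation*}
  \int_{-\infty}^{\infty}|g(u)|^2\,\dif u
  \;=\;\int_{0}^{\infty}|A(e^u)|^2 e^{-2\sigma u}\,\dif u
  \;=\;\int_{1}^{\infty}\frac{|A(x)|^2}{x^{1+2\sigma}}\,\dif x,
\end{equation*}
which is exactly the left-hand side of the desired identity. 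The only point that requires care is the justification in the first step (that Abel summation in this form is valid and that $g\in L^2$); both follow cleanly from $\sigma>\max(0,\sigma_c)$, and I expect this to be the only real obstacle, the remainder being a bookkeeping exercise in the change of variables and Plancherel.
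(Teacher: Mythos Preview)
Your argument is correct and is precisely the standard proof of this Parseval-type identity for Dirichlet series. The paper does not supply its own proof of this lemma at all; it simply cites it as \cite[Theorem~5.4]{MVa1}, so there is nothing to compare against beyond noting that your Abel-summation-plus-Plancherel route is exactly the textbook derivation behind that reference.
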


\subsection{Probabilistic Evaluations}
\label{secprobcalc}

  In this section, we give more results concerning Steinhaus random multiplicative functions. These results ultimately lead to Proposition \ref{propintrandfcnmain} below on an upper bound for the low moments of a short integral of a random Euler product, which will play a crucial role in our proof of Theorem \ref{lowerboundsfixedmodmean}. Our treatments in this section largely follow those given in \cite{Harper20}. 

Let $x$ be large and $-1/100 \leq \sigma \leq 1/100$. Recall that  $h(n)$ is a Steinhaus random multiplicative function. As in \cite[Section 3.2]{Harper20}, let $\tilde{\p} = \tilde{\p}_{x,\sigma}$ be a new probability measure such that for each event $A$, 
$$ \tilde{\p}(A) := \frac{\E \textbf{1}_{A} \prod_{p \leq x^{1/e}} \left|1 - \frac{\alpha_p h(p)}{p^{1/2+\sigma}}\right|^{-2}\left|1 - \frac{\beta_p h(p)}{p^{1/2+\sigma}}\right|^{-2}}{\E \prod_{p \leq x^{1/e}} \left|1 - \frac{\alpha_p h(p)}{p^{1/2+\sigma}}\right|^{-2}\left|1 - \frac{\beta_p h(p)}{p^{1/2+\sigma}} \right|^{-2}}.$$
We also denote $\tilde{\E}$ the expectation with respect to the measure $\tilde{\p}$ such that for any random variable $F$,
$$ \tilde{\E}F := \frac{\E F \prod_{p \leq x^{1/e}} \left|1 - \frac{\alpha_p h(p)}{p^{1/2+\sigma}}\right|^{-2}\left|1 - \frac{\beta_p h(p)}{p^{1/2+\sigma}}\right|^{-2}}{\E \prod_{p \leq x^{1/e}} \left|1 - \frac{\alpha_p h(p)}{p^{1/2+\sigma}}\right|^{-2}\left|1 - \frac{\beta_p h(p)}{p^{1/2+\sigma}} \right|^{-2}}, $$

We further set for each $l \in \N \cup \{0\}$, 
\begin{align}
\label{Ildef}
I_l(s) := \prod_{x^{e^{-(l+2)}} < p \leq x^{e^{-(l+1)}}} (1 - \frac{\alpha_ph(p)}{p^s})^{-1}(1 - \frac{\beta_ph(p)}{p^s})^{-1}.
\end{align}
 Let also $(l_j)_{j=1}^{n}$ be a strictly decreasing sequence of non-negative integers such that $l_1 \leq \log\log x - 2$, and let $(x_j)_{j=1}^{n}$  be a corresponding increasing sequence of real numbers such that $x_j := x^{e^{-(l_j + 1)}}$. 
  
   We have the following result that is analogue to \cite[Lemma 3]{Harper20}.
\begin{lemma}
\label{lem3}
With the notation as above. Suppose that $x_1$ is sufficiently large and that $|\sigma| \leq 1/\log x_n$ and that $(v_j)_{j=1}^{n}$ is any sequence of real numbers satisfying for any $1 \leq j \leq n$, 
 $$ |v_j| \leq (1/40)\sqrt{\log x_j} + 2. $$

Then we have for any sequence of real numbers $(t_j)_{j=1}^{n}$,  
\begin{align*}
& \tilde{\p}(v_j \leq \log|I_{l_j}(1/2 + \sigma + it_j)| \leq v_j + 1/j^2 \; \forall 1 \leq j \leq n)  =  \left(1+O\left(\frac{1}{x_{1}^{1/100}}\right) \right) \p(v_j \leq N_j \leq v_j + 1/j^2 \; \forall 1 \leq j \leq n) , 
\end{align*}
where $N_j$ are independent Gaussian random variables with mean $\sum_{x_{j}^{1/e} < p \leq x_j} \frac{\lambda^2(p)\cos(t_j \log p)}{p^{1 + 2\sigma}}$ and variance $\sum_{x_{j}^{1/e} < p \leq x_j} \frac{\lambda^2(p)}{2p^{1 + 2\sigma}}$.
\end{lemma}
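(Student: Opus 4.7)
My plan is to follow the Fourier-analytic strategy Harper uses to establish \cite[Lemma 3]{Harper20}, adapted to accommodate the Satake parameters $\alpha_p,\beta_p$ that enter through the modular-form Euler factors. The crucial structural input is independence: because the integers $l_j$ are strictly decreasing, the prime windows $(x_j^{1/e}, x_j]$ defining the $I_{l_j}(s)$ are pairwise disjoint, so the random variables $I_{l_j}(\tfrac12+\sigma+it_j)$ and the corresponding portions of the tilting density in $\tilde{\p}$ are mutually independent across $j$.

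First I approximate each indicator $\mathbf{1}_{[v_j,v_j+1/j^2]}$ by a smooth bump whose Fourier transform $K_j$ decays rapidly, and apply Fourier inversion, so that the target probability is realised as an $n$-fold integral of
\begin{align*}
\tilde{\E}\exp\!\Big(i\sum_{j=1}^n u_j \log|I_{l_j}(\tfrac12+\sigma+it_j)|\Big) = \prod_{j=1}^n \tilde{\E}\exp\!\big(iu_j\log|I_{l_j}(\tfrac12+\sigma+it_j)|\big)
\end{align*}
against $\prod_j K_j(u_j)\,du_j$, the factorisation being justified by the independence noted above. Each single-index factor is the ratio of two $\E$-expectations of the type treated in Lemma \ref{eulerproduct}: dividing the formula in \eqref{Eest} by the formula in \eqref{Eprodsquare} gives
\begin{align*}
\tilde{\E}\exp(iu_j\log|I_{l_j}(\tfrac12+\sigma+it_j)|) = \exp\!\Big(iu_j \mu_j - \tfrac12 u_j^2 \Sigma_j + O\!\Big(\frac{1+|u_j|^3}{\sqrt{x_j^{1/e}}\log x_j}\Big)\Big),
\end{align*}
where $\mu_j$ and $\Sigma_j$ are exactly the mean and variance stated for $N_j$. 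This is the characteristic function of $N_j$ up to an admissible error.

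Plugging this back into the Fourier inversion integral reproduces $\p(v_j \leq N_j \leq v_j+1/j^2 \;\forall\, j)$ as the main term. The multiplicative remainder $1+O(x_1^{-1/100})$ comes from summing the $n$ single-index errors, which is dominated by the $j=1$ contribution because the $x_j$ grow doubly exponentially. The \emph{main obstacle} is the control of the Fourier inversion when some $|v_j|$ is of the maximal permitted size $\tfrac{1}{40}\sqrt{\log x_j}+2$: one must shift the $u_j$-contour to a saddle point $u_j^{\ast}\approx (v_j-\mu_j)/\Sigma_j$ and bound the cubic and higher Taylor remainders in the exponent of the characteristic function. These remainders are of size $O(|u_j|^3/\sqrt{x_j})$, relying on the Rankin--Selberg-type prime sum \eqref{merten1} in place of the classical Mertens estimate \eqref{merten} used in \cite{Harper20}; the restriction $|v_j|\leq \tfrac{1}{40}\sqrt{\log x_j}+2$ is exactly what renders these cubic terms negligible after the contour shift. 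Apart from this systematic replacement of $\sum_{p\leq y}1/p$ by its modular-form analogue $\sum_{p\leq y}\lambda^2(p)/p$, the proof proceeds essentially as in \cite[\S3.2]{Harper20}.
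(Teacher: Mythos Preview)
Your proposal is correct and follows essentially the same route as the paper. The paper's own proof is a terse sketch that simply points to Harper's argument in \cite[Lemma~3]{Harper20}, records the variance bound $\sum_{x_j^{1/e}<p\le x_j}\frac{\lambda^2(p)}{2p^{1+2\sigma}}\asymp 1$ via \eqref{merten1}, writes out the Gaussian characteristic function $\E e^{iuN_j}$, and then invokes \eqref{Eest}; your plan fills in precisely these steps (independence from disjoint prime windows, Fourier inversion, the tilted characteristic function as the ratio \eqref{Eest}/\eqref{Eprodsquare}, and the replacement of \eqref{merten} by \eqref{merten1}) and correctly isolates the role of the hypothesis $|v_j|\le\tfrac{1}{40}\sqrt{\log x_j}+2$ in controlling the cubic remainder.
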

\begin{proof}
  The proof follows by a modification of the proof of \cite[Lemma 3]{Harper20}. We first note that by \eqref{merten1}, we have for all $1 \leq j \leq n$,  
$$ e^{-2}(1 + O(\frac{1}{\log x_j})) \leq \sum_{x_{j}^{1/e} < p \leq x_j} \frac{\lambda^2(p)}{p^{1 + 2\sigma}} \leq e^{2} \sum_{x_{j}^{1/e} < p \leq x_j} \frac{\lambda^2(p)}{p} = e^{2}(1 + O(\frac{1}{\log x_j})). $$
  Moreover, we have
\begin{align*}
\E e^{iuN_j} = & \E \exp\{iu\sum_{x_{j}^{1/e} < p \leq x_j} \frac{\lambda^2(p)\cos(t_j \log p)}{p^{1 + 2\sigma}} + iu\sqrt{\sum_{x_{j}^{1/e} < p \leq x_j} \frac{\lambda^2(p)}{2p^{1 + 2\sigma}}} N(0,1)\} \nonumber \\
= & \exp\{\sum_{x_{j}^{1/e} < p \leq x_j} \frac{\lambda^2(p)(iu\cos(t_j \log p) - u^{2}/4)}{p^{1 + 2\sigma}} \},
\end{align*}
 where $N(0,1)$ denotes a normal random variable with mean $0$ and variance $1$. 
  The assertion of the lemma now follows from the above upon using \eqref{Eest} and arguing as in the proof of \cite[Lemma 3]{Harper20}. This completes the proof of the lemma. 
\end{proof}

 Now Lemma \ref{lem3} allows us to establish the following analogue of \cite[Lemma 4]{Harper20}.
\begin{lemma}
\label{lem4}
 With the notation as in Lemma \ref{lem3}. Suppose $(u_j)_{j=1}^{n}$ and $(v_j)_{j=1}^{n}$ are sequences of real numbers such that for any $1 \leq j \leq n$, 
$$ -(1/80)\sqrt{\log x_j} \leq u_j \leq v_j \leq (1/80)\sqrt{\log x_j}. $$
Then we have
\begin{align*}
& \p(u_j + 2 \leq \sum_{m=1}^{j} N_m \leq v_j - 2 \; \forall j \leq n) \\
 \ll & \ \tilde{\p}(u_j \leq \sum_{m=1}^{j} \log|I_{l_m}(\frac{1}
{2} + \sigma + it_m)| \leq v_j \; \forall j \leq n) \\
\ll & \ \p(u_j - 2 \leq \sum_{m=1}^{j} N_m \leq v_j + 2 \; \forall j \leq n). 
\end{align*}

 In addition, if the numbers $(t_j)_{j=1}^{n}$ satisfy $|t_j| \leq \frac{1}{j^{2/3} \log x_j}$ then we have
\begin{eqnarray}
&& \p((u_j - j) + O(1) \leq \sum_{m=1}^{j} G_m \leq (v_j-j) - O(1) \; \forall j \leq n) \nonumber \\
& \ll & \tilde{\p}(u_j \leq \sum_{m=1}^{j} \log|I_{l_m}(1/2 + \sigma + it_m)| \leq v_j \; \forall j \leq n) \nonumber \\
& \ll & \p((u_j - j) - O(1) \leq \sum_{m=1}^{j} G_m \leq (v_j-j) + O(1) \; \forall j \leq n) , \nonumber
\end{eqnarray}
where $G_m$ are independent Gaussian random variables, each having mean 0 and variance $\sum_{x_{m}^{1/e} < p \leq x_m} \frac{\lambda^2(p)}{2p^{1 + 2\sigma}}$.
\end{lemma}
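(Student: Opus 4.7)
The plan is to derive Lemma \ref{lem4} from Lemma \ref{lem3} by a common discretization of both the random multiplicative and the Gaussian sides. For each $1 \leq m \leq n$, let $V_m$ range over the grid $\{k/m^2 : k \in \intz\}$ intersected with $[-(1/40)\sqrt{\log x_m}, (1/40)\sqrt{\log x_m}]$, and consider the cells $E_m(V_m) := \{V_m \leq \log|I_{l_m}(\tfrac{1}{2}+\sigma+it_m)| < V_m + 1/m^2\}$. Since $\sum_{m \geq 1} 1/m^2 < 2$, whenever $E_m(V_m)$ holds for all $m \leq j$ we have $\sum_{m=1}^{j} V_m \leq \sum_{m=1}^{j} \log|I_{l_m}| \leq \sum_{m=1}^{j} V_m + 2$, so the partial sums of the discretized and actual values differ by at most $2$.

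For the upper bound in the first statement, the target event $\{u_j \leq \sum_{m=1}^{j} \log|I_{l_m}| \leq v_j\;\forall j \leq n\}$ is contained in the disjoint union, over grid vectors $(V_1,\ldots,V_n)$ satisfying $u_j - 2 \leq \sum_{m=1}^{j} V_m \leq v_j$ for every $j$, of $\bigcap_{m=1}^{n} E_m(V_m)$. Applying Lemma \ref{lem3} to each intersection and summing yields
\begin{align*}
\tilde{\p}\Bigl(u_j \leq \sum_{m=1}^{j} \log|I_{l_m}| \leq v_j\;\forall j\Bigr) \ll \sum_{(V_m)} \p\Bigl(V_m \leq N_m < V_m + \tfrac{1}{m^2}\;\forall m\Bigr) \ll \p\Bigl(u_j - 2 \leq \sum_{m=1}^{j} N_m \leq v_j + 2\;\forall j\Bigr),
\end{align*}
the last step because $V_m \leq N_m < V_m + 1/m^2$ together with $u_j - 2 \leq \sum V_m \leq v_j$ forces $u_j - 2 \leq \sum N_m \leq v_j + 2$. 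The matching lower bound is symmetric: each Gaussian realization with $u_j + 2 \leq \sum N_m \leq v_j - 2$ determines a unique grid vector $(V_m)$ with $u_j \leq \sum V_m \leq v_j - 2$, whose associated cell sits inside the original tilted event.

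For the second statement, set $\mu_m := \E N_m = \sum_{x_m^{1/e} < p \leq x_m} \lambda^2(p) \cos(t_m \log p)/p^{1+2\sigma}$. The bound $1 - \cos(t_m \log p) \ll (t_m \log p)^2$, combined with $|t_m| \leq 1/(m^{2/3}\log x_m)$ and $\log p \leq \log x_m$, limits the cosine deficit contribution to $O(1/m^{4/3})$, while \eqref{merten1} applied on $(x_m^{1/e}, x_m]$ gives $\sum \lambda^2(p)/p^{1+2\sigma} = 1 + O(1/\log x_m)$ under $|\sigma| \leq 1/\log x_n$. Hence $\mu_m = 1 + O(1/m^{4/3}) + O(1/\log x_m)$, and summing a convergent series yields $\sum_{m=1}^{j} \mu_m = j + O(1)$ uniformly in $j$. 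Substituting $N_m = G_m + \mu_m$ into the bounds from the first part and absorbing the $O(1)$ drift into the additive constants reproduces the stated $G_m$-form.

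The main obstacle will be bookkeeping around the grid truncation: one must check that the tails $|V_m| > (1/40)\sqrt{\log x_m}$ contribute negligibly under both $\tilde{\p}$ and $\p$, which is handled by Gaussian tail estimates for $N_m$ combined with the uniform comparison from Lemma \ref{lem3}. The relative error $O(x_1^{-1/100})$ from Lemma \ref{lem3} appears multiplicatively in each cell probability rather than per cell, so it survives summation cleanly and does not spoil the asymptotic.
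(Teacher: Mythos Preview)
Your proposal is correct and follows essentially the same route as the paper, which simply defers to Harper's proof of \cite[Lemma 4]{Harper20}: the discretization into cells of width $1/m^2$ and the application of Lemma~\ref{lem3} cell-by-cell is precisely that argument, and your mean computation for $\mu_m$ matches the paper's. One small slip: your claim that $\sum_{x_m^{1/e} < p \leq x_m} \lambda^2(p)/p^{1+2\sigma} = 1 + O(1/\log x_m)$ under $|\sigma| \leq 1/\log x_n$ omits the $\sigma$-correction term $O(\log x_m/\log x_n)$ (compare the paper's display, which keeps it and then bounds it by $O(e^{-(n-m)})$); this term is still summable in $m$, so your conclusion $\sum_{m=1}^{j} \mu_m = j + O(1)$ remains valid.
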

\begin{proof}
 The assertion of the lemma follows from a modification of the proof of \cite[Lemma 4]{Harper20}, upon using \eqref{merten1} to see that under our conditions that $|\sigma| \leq 1/\log x_n$ and $|t_m| \leq \frac{1}{m^{2/3}\log x_m}$,
\begin{align*}
\begin{split}
\sum_{x_{m}^{1/e} < p \leq x_m} \frac{\lambda^2(p)\cos(t_m \log p)}{p^{1 + 2\sigma}} = & \sum_{x_{m}^{1/e} < p \leq x_m} \frac{\lambda^2(p)}{p^{1 + 2\sigma}} + O(\sum_{x_{m}^{1/e} < p \leq x_m} \frac{\lambda^2(p)(|t_m|\log p)^2}{p^{1 + 2\sigma}}) \\
= & \sum_{x_{m}^{1/e} < p \leq x_m} \frac{\lambda^2(p)}{p} + O(\sum_{x_{m}^{1/e} < p \leq x_m} \frac{\lambda^2(p)(|\sigma| \log p + (|t_m|\log p)^2)}{p}) \nonumber \\
 = & 1 + O(\frac{1}{\log x_m} + \frac{\log x_m}{\log x_n} + \frac{1}{m^{4/3}}) \\
=& 1 + O(\frac{1}{e^{n-m}} + \frac{1}{m^{4/3}}). 
\end{split}
\end{align*}
 This completes the proof of the lemma.
\end{proof}

  Now, using Lemma \ref{lem3}--\ref{lem4} with Probability Results 1 and 2 in \cite{Harper20}, we arrive at the following result analogue to \cite[Proposition 5]{Harper20}.
\begin{prop}
\label{prop5}
There exists a large natural number $B$ so that the following is valid. Let $(l_j)_{j=1}^{n}$ be a decreasing sequence of non-negative integers defined by $l_j := \lfloor \log\log x \rfloor - (B+1) - j$, where $n \leq \log\log x - (B+1)$ is large. Suppose that $|\sigma| \leq \frac{1}{e^{B+n+1}}$, and that $(t_j)_{j=1}^{n}$ is a sequence of real numbers satisfying $|t_j| \leq \frac{1}{j^{2/3} e^{B+j+1}}$ for all $j$.

Then with $I_{l}(s)$ being defined as in \eqref{Ildef}, we have uniformly for any large $a$ and any function $g(n)$ satisfying $|g(n)| \leq 10\log n$, 
$$ \tilde{\p}(-a -Bj \leq \sum_{m=1}^{j} \log|I_{l_m}(1/2 + \sigma + it_m)| \leq a + j + g(j) \; \forall j \leq n) \asymp \min\{1,\frac{a}{\sqrt{n}}\} . $$
\end{prop}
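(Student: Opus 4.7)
The plan is to emulate the proof of \cite[Proposition 5]{Harper20}, replacing its probabilistic inputs with the modular-form analogues provided by Lemmas \ref{lem3} and \ref{lem4}. The core reduction is to turn the tilted probability on the left-hand side into a one-sided Gaussian barrier probability for a random walk, then apply standard ballot-type estimates.

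First I would invoke the second assertion of Lemma \ref{lem4} with the choice $u_j := -a - Bj$ and $v_j := a + j + g(j)$. With $l_j = \lfloor \log\log x\rfloor - (B+1) - j$ one checks that $\log x_j \asymp e^{B+j+1}$, so $(1/80)\sqrt{\log x_j} \asymp e^{(B+j+1)/2}$ dominates both $|u_j|$ and $|v_j|$ provided $B$ is large and $a$ is at most comparable to $\sqrt{\log x_n}$. (The larger range of $a$ is harmless: when $a \gtrsim \sqrt{n}$ the target $\min\{1,a/\sqrt{n}\}$ saturates at $1$, so the upper bound is trivial and the lower bound follows from dropping enough constraints.) The hypothesis $|t_j| \leq 1/(j^{2/3} e^{B+j+1})$ of the proposition directly supplies the Lemma \ref{lem4} hypothesis $|t_j| \leq 1/(j^{2/3}\log x_j)$.

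Lemma \ref{lem4} then converts the tilted probability to the Gaussian expression
\[
\p\Bigl(-a - (B+1)j - O(1) \leq S_j \leq a + g(j) + O(1) \ \forall j \leq n \Bigr),
\]
where $S_j = \sum_{m=1}^j G_m$ and each $G_m$ is a centered Gaussian of variance $\sum_{x_m^{1/e} < p \leq x_m} \frac{\lambda^2(p)}{2 p^{1+2\sigma}}$. By \eqref{merten1} and $|\sigma| \leq e^{-(B+n+1)}$, each variance equals $\tfrac12 + O(e^{-(n-m)} + 1/\log x_m)$, so $\mathrm{Var}(S_j) = j/2 + o(j)$ uniformly in $j \leq n$. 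Since the lower barrier $-a-(B+1)j$ grows linearly while typical fluctuations of $S_j$ are $O(\sqrt{j})$, a Gaussian tail bound shows that the event of violating it contributes at most $\sum_{j\leq n} e^{-cBj} = O(e^{-cB})$ for $B$ large, which is negligible compared to the target $\min\{1, a/\sqrt{n}\}$.

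What remains is the one-sided estimate $\p(S_j \leq a + g(j) + O(1) \ \forall j \leq n)$, a Gaussian barrier problem with a slowly widening upper boundary. Since $|g(j)| \leq 10\log j$ is of smaller order than the natural $\sqrt{j}$ scale, sandwiching the barrier between $a + O(1)$ and $a + O(\log n)$ and applying Probability Results 1 and 2 of \cite{Harper20} (classical ballot/reflection estimates together with an exponential tilt to absorb the logarithmic perturbation) yields the asserted asymptotic $\asymp \min\{1, a/\sqrt{n}\}$.

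The main obstacle will be the bookkeeping: one must verify that the $O(1)$ slacks introduced by Lemma \ref{lem4}, the logarithmic perturbation in $g(j)$, and the small deviation of $\mathrm{Var}(G_m)$ from $\tfrac12$ can all be absorbed without disturbing the order $\min\{1, a/\sqrt{n}\}$, and that $B$ can be chosen simultaneously large enough to kill the lower barrier yet compatible with the ranges of $a$, $|\sigma|$, and $|t_j|$ allowed by Lemma \ref{lem4}. Once these variance estimates are established via \eqref{merten1}, the remainder of the argument follows line-by-line from the Steinhaus case in \cite{Harper20}, because the only change is the factor $\lambda^2(p)$ inside the Mertens-type sums, which contributes the same leading $\log\log$ asymptotic as the constant $1$ in Harper's setting.
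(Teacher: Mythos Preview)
Your overall strategy---reduce via the second part of Lemma \ref{lem4} to a Gaussian barrier problem, bound the variances using \eqref{merten1}, and then invoke Harper's Probability Results 1 and 2---is exactly the route the paper takes (the paper's proof is even terser, simply noting that the variances lie in $[1/20,5]$ and deferring entirely to \cite{Harper20}).

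There is, however, one genuine gap in your write-up. You dismiss the lower barrier by the union bound
\[
\p\bigl(S_{j} < -a -(B+1)j \text{ for some } j\bigr) \le \sum_{j\le n} e^{-cBj} = O(e^{-cB}),
\]
and assert this is ``negligible compared to the target $\min\{1,a/\sqrt{n}\}$.'' That is false: $B$ is a fixed constant, while $a/\sqrt{n}$ can be made arbitrarily small by taking $n$ large with $a$ fixed (large). So the subtraction $\p(\text{upper only}) - \p(\text{lower violated}) \gtrsim a/\sqrt n - e^{-cB}$ yields nothing once $n \gg a^{2}e^{2cB}$, and your lower bound collapses in precisely the regime where the estimate is interesting.

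The repair is either (i) to skip the one-sided reduction entirely---Harper's Probability Results 1 and 2 are already formulated so as to handle the two-sided barrier with a linearly diverging lower boundary, so once you have the variance bounds you may quote them directly, as the paper does---or (ii) to bound the \emph{joint} probability $\p(\text{upper satisfied for all }j,\ \text{lower violated at some }j_{0})$ rather than the bare lower-violation probability. For (ii) one argues that, conditionally on the upper barrier holding, the walk behaves like a Bessel-type process with $|S_{j}|\asymp \sqrt{j}$, so the conditional probability of ever dipping below $-(B+1)j$ is $O(e^{-cB^{2}})<1/2$; this gives $\p(\text{two-sided})\ge \tfrac12\,\p(\text{upper only})\asymp a/\sqrt n$. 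Either route closes the gap; the rest of your outline is sound.
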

\begin{proof}
  This follows from a straightforward modification of the proof of \cite[Proposition 5]{Harper20}, with $G_j$ there being replaced by independent Gaussians having mean $0$ and variance $\sum_{x_{j}^{1/e} < p \leq x_j} \frac{\lambda^2(p)}{2p^{1 + 2\sigma}}$. Here we notice that by \eqref{merten1}, the variance is $\leq \frac{e^{(2\log x_j)/\log x_n}}{2} \sum_{x_{j}^{1/e} < p \leq x_j} \frac{\lambda^2(p)}{p}  \leq 5$, and similarly the variance is $\geq 1/20$. Thus, one is able to apply Probability Results 1 and 2 in \cite{Harper20} to deduce the assertion of the proposition. This completes the proof. 
\end{proof}

  Now, for each $|t| \leq 1/2$ and integers $0 \leq j \leq \log\log x - 2$, we define iteratively $t(j)$ by setting $t(-1) = t$ and 
$$ t(j) := \max\{u \leq t(j-1): u = \frac{n}{((\log x)/e^{j+1}) \log((\log x)/e^{j+1})} \; \text{for some} \; n \in \mz \} . $$
  let $B$ be the large fixed natural number from Proposition \ref{prop5}, and let $\mathcal{G}(m)$ denote the event that for all $|t| \leq 1/2$ and all $m \leq j \leq \log\log x - B - 2$, we have
$$ \Biggl( \frac{\log x}{e^{j+1}} e^{g(x,j)} \Biggr)^{-1} \leq \prod_{l = j}^{\lfloor \log\log x \rfloor - B - 2} |I_{l}(1/2 - \frac{m}{\log x} + it(l))| \leq \frac{\log x}{e^{j+1}} e^{g(x,j)} , $$
where $g(x,j) :=  C\min\{\sqrt{\log\log x}, \frac{1}{1-k} \} + 2\log\log(\frac{\log x}{e^{j+1}})$ for a large constant $C$.

   For any large quantity $P$ and any complex $s$ with $\Re(s) > 0$, let $F^{\text{rand}}_{P, f}$ denote the partial Euler product of $h(n)\lambda(n)$ over $P$-smooth numbers, so that we have 
\begin{align}
\label{FPdef}
\begin{split}
F^{\text{rand}}_{P,f}(s) = \sum_{\substack{n=1, \\ n \; \text{is} \; P \; \text{smooth}}}^{\infty} \frac{h(n)\lambda(n)}{n^s}= \prod_{p \leq P} \left(1 - \frac{\alpha_ph(p)}{p^s}\right)^{-1}\left(1 - \frac{\beta_ph(p)}{p^s}\right)^{-1}, 
\end{split}
\end{align}
  where the last equality above follows from \eqref{Lambdapkrel}. 

For any integer $0 \leq m \leq \lfloor \log\log x \rfloor$, we denote $F_m$ for brevity the function $F^{\text{rand}}_{x^{e^{-(m+1)}},f}(s)$. With the above notation, we observe by \eqref{merten1} that we have,
\begin{align}
\label{EFsquare}
\begin{split}
\E |F_{m}(1/2 - \frac{m}{\log x}+it)|^2  = & \exp\Big (\sum_{p \leq x^{e^{-(k+1)}}} \frac{\lambda^2(p)}{p^{1 - 2m/\log x}} + O(1) \Big )  \\
= & \exp\{\sum_{p \leq x^{e^{-(m+1)}}} \frac{\lambda^2(p)}{p} + O(\sum_{p \leq x^{e^{-(m+1)}}} \frac{m \lambda^2(p) \log p}{p \log x} + 1) \} \ll \frac{\log x}{e^m}.
\end{split}
\end{align}
   We further  make use of \eqref{Eprodsquare} and follow the proofs of Key Propositions 1 and 2 in \cite{Harper20} to derive the following two results.
\begin{prop}
With the natation as above. For all large $x$, and uniformly for $0 \leq m \leq \lfloor \log\log\log x \rfloor$ and $2/3 \leq k \leq 1$, we have
$$ \E( \textbf{1}_{\mathcal{G}(m)} \int_{-1/2}^{1/2} |F_{m}(1/2 - \frac{m}{\log x} + it)|^2 dt )^k \ll \Biggl( \frac{\log x}{e^{m}} C \min\{1, \frac{1}{(1-k)\sqrt{\log\log x}} \} \Biggr)^k.$$
\end{prop}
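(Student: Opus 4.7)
The argument will closely follow the strategy of Key Proposition 1 in \cite{Harper20}, with mean-value computations modified in the obvious way: sums involving $1/p$ are systematically replaced by sums involving $\lambda^2(p)/p$, which by the Rankin--Selberg estimate \eqref{merten1} are equivalent on average. Since $2/3 \leq k \leq 1$, the function $y \mapsto y^k$ is concave on $[0,\infty)$, so Jensen's inequality yields
\[
 \E \Bigl(\textbf{1}_{\mathcal{G}(m)} \int_{-1/2}^{1/2} |F_m(1/2 - m/\log x + it)|^2 \, dt \Bigr)^k \leq \Bigl(\E\, \textbf{1}_{\mathcal{G}(m)} \int_{-1/2}^{1/2} |F_m|^2 \, dt \Bigr)^k,
\]
reducing the task to the first-moment bound
\[
 \E\, \textbf{1}_{\mathcal{G}(m)} \int_{-1/2}^{1/2} |F_m(1/2 - m/\log x + it)|^2 \, dt \ll \frac{\log x}{e^m} \min \Bigl\{ 1, \frac{1}{(1-k)\sqrt{\log \log x}} \Bigr\}.
\]

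Swapping expectation and integration by Fubini, for each fixed $t$ I perform a change of measure tilted by $|F_m(1/2 - m/\log x + it)|^2 / \E|F_m(1/2 - m/\log x + it)|^2$ (the natural analogue of $\tilde{\p}$ from Section \ref{secprobcalc} at the shifted base point), which produces
\[
 \E\,\textbf{1}_{\mathcal{G}(m)} |F_m|^2 = \E|F_m|^2 \cdot \tilde{\p}_t(\mathcal{G}(m)),
\]
with $\E|F_m(1/2 - m/\log x + it)|^2 \ll \log x/e^m$ directly from \eqref{EFsquare}. It therefore remains to establish $\tilde{\p}_t(\mathcal{G}(m)) \ll \min\{1, 1/((1-k)\sqrt{\log\log x})\}$ uniformly for $|t| \leq 1/2$. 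Taking logarithms in the definition of $\mathcal{G}(m)$ converts the event into two-sided barrier-type bounds on the partial sums
\[
 S_j := \sum_{l=j}^{\lfloor\log\log x\rfloor - B - 2} \log|I_l(1/2 - m/\log x + it(l))|, \qquad m \leq j \leq \lfloor\log\log x\rfloor - B - 2,
\]
and reindexing from the top by $j' := \lfloor\log\log x\rfloor - B - 2 - j$, Lemmas \ref{lem3}--\ref{lem4} identify the successive increments $\log|I_l|$ as, to leading order, independent Gaussians of unit mean. Hence $\mathcal{G}(m)$ is precisely a barrier-crossing event of the form handled by Proposition \ref{prop5}, with barrier height $a \asymp \min\{\sqrt{\log\log x}, 1/(1-k)\}$ and length $n \asymp \log\log x - m$, and Proposition \ref{prop5} delivers the required bound $\tilde{\p}_t(\mathcal{G}(m)) \ll \min\{1, a/\sqrt{n}\} \ll \min\{1, 1/((1-k)\sqrt{\log\log x})\}$.

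The principal obstacle will be this final step: translating the explicit function $g(x,j)$ in the definition of $\mathcal{G}(m)$ into the barrier shape $a + j + g(j)$ of Proposition \ref{prop5} (and its lower counterpart $-a - Bj$), absorbing the $it \mapsto it(l)$ discretization via a routine Lipschitz estimate on $I_l$, and verifying that the constant $C$ appearing in $g(x,j)$ is large enough to swallow the $O(1)$ errors from the Gaussian approximation of Lemmas \ref{lem3}--\ref{lem4}.
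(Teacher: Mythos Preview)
Your proposal is correct and follows essentially the same route as the paper, which explicitly defers to the proof of Key Proposition~1 in \cite{Harper20}: Jensen/H\"older to pass to the first moment, Fubini, the tilt by $|F_m|^2$ at the fixed $t$ (together with the rotation $h(p)\mapsto h(p)p^{it}$ so that the residual shifts $t(l)-t$ meet the smallness hypothesis of Proposition~\ref{prop5}), and then the barrier estimate with $a\asymp C\min\{\sqrt{\log\log x},1/(1-k)\}$ coming from $g(x,j)$. The obstacles you flag---matching the logarithm of the $\mathcal{G}(m)$ constraints to the barrier shape $-a-Bj\le \cdot\le a+j+g(j)$ and absorbing the $O(1)$ Gaussian-approximation errors into $C$---are exactly the bookkeeping points in Harper's argument, and they go through unchanged once one uses \eqref{merten1} in place of Mertens.
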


\begin{prop}
With the natation as above. For all large $x$, and uniformly for $0 \leq m \leq \lfloor \log\log\log x \rfloor$ and $2/3 \leq k \leq 1$, we have
$$ \p(\mathcal{G}(m) \; \text{fails}) \ll e^{-2C \min\{\sqrt{\log\log x}, \frac{1}{1-k} \}}. $$
\end{prop}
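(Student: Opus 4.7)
The plan is to follow the strategy of Key Proposition 2 in \cite{Harper20}, adapting the Gaussian walk analysis to the Fourier coefficient setting in the same manner as the earlier results of Section \ref{secprobcalc}.

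First, I would reduce the uncountable condition $|t| \leq 1/2$ in the definition of $\mathcal{G}(m)$ to a countable union. By the construction of $t(l)$, as $t$ ranges over $[-1/2, 1/2]$ the sequence $(t(l))_{l=m}^{N}$, where $N := \lfloor\log\log x\rfloor - B - 2$, takes only polynomially many distinct values in $\log x$, and the product $\prod_{l} |I_{l}(\tfrac12 - \tfrac{m}{\log x} + it(l))|$ is piecewise constant in $t$. Thus the failure event $\mathcal{G}(m)^c$ is a finite union, indexed by a polynomial (in $\log x$) number of configurations together with a choice of scale $j \in [m, N]$ at which the exit first occurs.

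Second, for each fixed configuration and scale $j$, I would bound the probability that $|\sum_{l=j}^{N} \log|I_l(\tfrac12 - \tfrac{m}{\log x} + it(l))||$ exceeds $\log(\tfrac{\log x}{e^{j+1}}) + g(x,j)$. This is done either by an exponential Chebyshev bound using the moment estimates in \eqref{Eprodsquare} (and Lemma \ref{eulerproduct}), or by passing to the tilted measure $\tilde{\p}$ and invoking Lemmas \ref{lem3} and \ref{lem4} together with Probability Results 1 and 2 of \cite{Harper20}. Either route shows that this probability decays exponentially in $g(x,j)$, with the effective rate controlled by the quadratic variation $\asymp 1$ per scale coming from \eqref{merten1}.

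Third, combining the contributions: the polynomial-in-$\log x$ count of discretized configurations at scale $j$ is absorbed into the $2\log\log(\log x/e^{j+1})$ summand of $g(x,j)$, while the outer sum over the $O(\log\log x)$ scales $j$ is absorbed into the $C\min\{\sqrt{\log\log x}, 1/(1-k)\}$ summand provided $C$ is chosen large enough. The result is the desired bound $\exp(-2C\min\{\sqrt{\log\log x}, 1/(1-k)\})$.

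The principal obstacle will be the uniform calibration of the exponential Chebyshev parameter (equivalently, the target of the Gaussian tail estimate) across the full range $2/3 \leq k \leq 1$: near $k = 1$ the quantity $1/(1-k)$ becomes smaller than $\sqrt{\log\log x}$ and must be the effective cutoff for the tail bound, so the moment parameter must be chosen so that the Chebyshev estimate matches this smaller of the two quantities. Once this is arranged, the remainder of the argument parallels \cite{Harper20} line by line, with Lemma \ref{eulerproduct} supplying the required Euler product computations in place of Harper's Lemma 1.
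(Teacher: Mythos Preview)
Your approach is correct and matches the paper's, which simply defers to the proof of Key Proposition~2 in \cite{Harper20} with the Euler product moment estimates \eqref{Eprodsquare} (equivalently Lemma~\ref{eulerproduct}) replacing Harper's Lemma~1. Two small corrections to your discussion of the ``principal obstacle'': first, near $k=1$ the quantity $1/(1-k)$ tends to infinity and is therefore \emph{larger} than $\sqrt{\log\log x}$, not smaller; second, no $k$-dependent calibration of the Chebyshev parameter is actually required, since the barrier height $g(x,j)$ already carries all the $k$-dependence, and the standard optimisation (exponent $\alpha \asymp a/V$ with $a$ the barrier and $V\asymp N-j$ the variance) yields decay $\ll e^{-2g(x,j)}$ uniformly in $k$, after which the $2\log\log(\log x/e^{j+1})$ term absorbs the union bound over discretised $t$-values and the sum over scales~$j$.
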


   As a consequence of the above two propositions, we further follow the proof of the upper bound in \cite[Theorem 1]{Harper20} given in Section 4.1 of \cite{Harper20} to obtain the next result concerning the $k$-th moment of a short integral of $F_m$ for certain restricted $k \leq 1$. 
\begin{prop}
\label{propintrandfcn}
 With the natation as above. We have uniformly for all $0 \leq m \leq \lfloor \log\log\log x \rfloor$ and $2/3 \leq k \leq 1 - \frac{1}{\sqrt{\log\log x}}$,
\begin{align}
\label{Erandfcnupperbound}
  \E(\int_{-1/2}^{1/2} |F_{m}(1/2 - \frac{m}{\log x} + it)|^2 dt )^k \ll (\frac {\log x}{e^{m} (1-k) \sqrt{\log\log x}})^k. 
\end{align} 
\end{prop}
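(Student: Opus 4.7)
The plan is to combine the two propositions just above through a good/bad event decomposition, closely following Section 4.1 of \cite{Harper20}. Write
\[
Z_m := \int_{-1/2}^{1/2} \lab F_m\bigl(\tfrac{1}{2} - \tfrac{m}{\log x} + it\bigr) \rab^2 \, dt,
\]
and split
\[
\E Z_m^k \;=\; \E\bigl( \textbf{1}_{\mathcal{G}(m)} Z_m^k \bigr) + \E\bigl( \textbf{1}_{\mathcal{G}(m)^c} Z_m^k \bigr).
\]

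The good-event term is handled by directly applying the first proposition above. The hypothesis $k \leq 1 - 1/\sqrt{\log\log x}$ forces $(1-k)\sqrt{\log\log x} \geq 1$, so the minimum there is attained by $1/((1-k)\sqrt{\log\log x})$, producing exactly the target bound $(\log x/(e^m(1-k)\sqrt{\log\log x}))^k$ once the constant $C$ is absorbed into $\ll$.

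For the bad-event term, I would apply H\"older's inequality with conjugate exponents $1/(1-k)$ and $1/k$, giving
\[
\E\bigl( \textbf{1}_{\mathcal{G}(m)^c} Z_m^k \bigr) \;\leq\; \p(\mathcal{G}(m)^c)^{1-k} (\E Z_m)^k.
\]
Fubini and \eqref{EFsquare} yield $\E Z_m \ll \log x/e^m$, and in our range $\min\{\sqrt{\log\log x}, 1/(1-k)\} = 1/(1-k)$, so the second proposition above gives $\p(\mathcal{G}(m)^c) \ll e^{-2C/(1-k)}$ and hence $\p(\mathcal{G}(m)^c)^{1-k} \ll e^{-2C}$. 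The bad-event contribution is therefore $O(e^{-2C}(\log x/e^m)^k)$.

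The main technical hurdle is ensuring that this crude bound is uniformly absorbed into the target $(\log x/(e^m(1-k)\sqrt{\log\log x}))^k$: the shortfall factor $e^{-2C}((1-k)\sqrt{\log\log x})^k$ can grow polynomially in $\sqrt{\log\log x}$ when $k$ is near the endpoint $2/3$. This is remedied either by taking $C$ sufficiently large or, more robustly and as in Section 4.1 of \cite{Harper20}, by refining the H\"older step to use a higher moment $\E Z_m^q$ for an appropriate $q > 1$, bounded via the correlation identity \eqref{Eest0}. Because the only substantive difference from Harper's setting is the replacement of $1$ by $\lambda(p)$ in the underlying multiplicative structure, and Lemmas \ref{RS} and \ref{eulerproduct} deliver all mean-value inputs in identical shape (up to the constants $b_1, b_2$), the accounting transfers verbatim; summing the good- and bad-event contributions then yields the claimed bound \eqref{Erandfcnupperbound}.
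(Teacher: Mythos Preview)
Your proposal is correct and follows exactly the route the paper takes: the paper's entire proof is the sentence ``we further follow the proof of the upper bound in \cite[Theorem 1]{Harper20} given in Section 4.1 of \cite{Harper20},'' and you have spelled out precisely that good/bad-event decomposition. One small correction: the alternative of ``taking $C$ sufficiently large'' does not actually work, since $C$ is a fixed absolute constant while the shortfall factor $((1-k)\sqrt{\log\log x})^k$ is unbounded in $x$ at $k=2/3$; the refinement via a higher moment in the H\"older step, which you correctly flag as the robust fix from \cite{Harper20}, is the one that is needed.
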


   With the aid of the above proposition, we now extend the upper bound for the left-hand side expression in \eqref{Erandfcnupperbound} to all $k \leq 1$.
\begin{prop}
\label{propintrandfcnmain}
 With the natation as above. We have uniformly for all $0 \leq m \leq \lfloor \log\log\log x \rfloor$ and $2/3 \leq k \leq 1$,
\begin{align}
\label{Erandfcnupperboundmain}
  \E(\int_{-1/2}^{1/2} |F_{m}(1/2 - \frac{m}{\log x} + it)|^2 dt )^k \ll (\frac {\log x}{1+(1-k) \sqrt{\log\log x}})^k. 
\end{align} 
\end{prop}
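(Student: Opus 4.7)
The plan is to split the range at the threshold $k_0 := 1 - 1/\sqrt{\log\log x}$ and treat the two regimes separately. For $2/3 \leq k \leq k_0$, I would deduce the bound directly from Proposition \ref{propintrandfcn}. In this range one has $(1-k)\sqrt{\log\log x} \geq 1$, so $1 + (1-k)\sqrt{\log\log x} \leq 2(1-k)\sqrt{\log\log x}$; combining this with the trivial inequality $e^m \geq 1$ shows that the denominator $e^m(1-k)\sqrt{\log\log x}$ in \eqref{Erandfcnupperbound} is bounded below by a constant multiple of $1 + (1-k)\sqrt{\log\log x}$, so \eqref{Erandfcnupperbound} implies \eqref{Erandfcnupperboundmain} immediately.

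For the narrow remaining strip $k_0 < k \leq 1$, the bound of Proposition \ref{propintrandfcn} would blow up, so a softer argument is required. Writing $X := \int_{-1/2}^{1/2} |F_m(\tfrac{1}{2} - \tfrac{m}{\log x} + it)|^2 \, dt$ and noting that $t \mapsto t^k$ is concave on $[0,\infty)$ for $0 \leq k \leq 1$, Jensen's inequality yields $\E X^k \leq (\E X)^k$. Interchanging expectation with the integral in $t$ and invoking the mean-square bound \eqref{EFsquare} gives $\E X \ll \log x / e^m \leq \log x$. On the other hand, in this range $(1-k)\sqrt{\log\log x} < 1$, so $1 + (1-k)\sqrt{\log\log x} \asymp 1$ and the target right-hand side of \eqref{Erandfcnupperboundmain} is $\asymp (\log x)^k$, closing this case.

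The argument is essentially soft: the new denominator $1 + (1-k)\sqrt{\log\log x}$ is engineered precisely to remain bounded above as $k \to 1$, so that the crude first-moment estimate via Jensen suffices in the strip where Proposition \ref{propintrandfcn} breaks down. There is accordingly no serious analytic obstacle; the only point requiring care is aligning the splitting threshold with the exact cutoff $1 - 1/\sqrt{\log\log x}$ appearing in the hypothesis of Proposition \ref{propintrandfcn}, which is what ensures that the two cases overlap and cover the full range $2/3 \leq k \leq 1$.
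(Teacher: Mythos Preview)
Your proposal is correct and follows essentially the same argument as the paper: split at $k_0 = 1 - 1/\sqrt{\log\log x}$, invoke Proposition~\ref{propintrandfcn} below the threshold, and above it use the concavity/H\"older bound $\E X^k \leq (\E X)^k$ together with the mean-square estimate \eqref{EFsquare}. The only cosmetic difference is that the paper phrases the $k>k_0$ step via H\"older's inequality rather than Jensen, which amounts to the same thing here.
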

\begin{proof}
  Note that by H\"{o}lder's inequality and \eqref{EFsquare}, we have
\begin{align*}
\begin{split}
  \E(\int_{-1/2}^{1/2} |F_{m}(1/2 - \frac{m}{\log x} + it)|^2 dt )^k \ll & \Big (\int_{-1/2}^{1/2} \E|F_{m}(1/2 - \frac{m}{\log x} + it)|^2 dt \Big)^k \ll (\log x)^k.
\end{split}
\end{align*} 
  This implies the estimation given in \eqref{Erandfcnupperboundmain} for $1 - \frac{1}{\sqrt{\log\log x}}<k \leq 1$. As the estimation given in \eqref{Erandfcnupperboundmain} follows from \eqref{Erandfcnupperbound} when $2/3 \leq k \leq 1 - \frac{1}{\sqrt{\log\log x}}$, this completest the proof. 
\end{proof}
   
  We set $m=0$ and $P=x^{1/e}$ in the above proposition to deduce a special case of it.
\begin{corollary}
\label{mcres1}
Uniformly for all large $P$ and $2/3 \leq k \leq 1$, we have
$$ \E( \int_{-1/2}^{1/2} |F_{P,f}^{\text{rand}}(1/2 + it)|^2 dt )^k \ll \Biggl(\frac{\log P}{1 + (1-k)\sqrt{\log\log P}}\Biggr)^{k} . $$
\end{corollary}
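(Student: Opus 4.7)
The plan is to read off the corollary directly from Proposition \ref{propintrandfcnmain} by making the specialization $m=0$ and the change of variables $x = P^{e}$ (equivalently $P = x^{1/e}$), as the statement itself suggests.

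First, I would observe that with $m=0$ we have $F_{m}(1/2 - m/\log x + it) = F_{0}(1/2 + it)$, and by the definition of $F_{m}$ as $F^{\text{rand}}_{x^{e^{-(m+1)}},f}(s)$, the truncation parameter becomes $x^{1/e}$. Under the substitution $x = P^{e}$ this gives $F_{0}(1/2+it) = F^{\text{rand}}_{P,f}(1/2+it)$, so the left-hand side of \eqref{Erandfcnupperboundmain} coincides with the left-hand side of the corollary's bound. Since Proposition \ref{propintrandfcnmain} applies uniformly for $0 \leq m \leq \lfloor \log\log\log x\rfloor$ and $2/3 \leq k \leq 1$, the condition at $m=0$ is met whenever $P$ (hence $x=P^{e}$) is sufficiently large, matching the hypothesis of the corollary.

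Next, I would match the right-hand side. Under $x = P^{e}$ we have $\log x = e\log P$ and $\log\log x = \log\log P + 1$, so
\begin{equation*}
\frac{\log x}{1 + (1-k)\sqrt{\log\log x}} = \frac{e\log P}{1 + (1-k)\sqrt{\log\log P + 1}} \ll \frac{\log P}{1 + (1-k)\sqrt{\log\log P}},
\end{equation*}
the implied constant being absolute (since $\sqrt{\log\log P + 1} \asymp \sqrt{\log\log P} + 1$ for large $P$, and $0 \leq 1-k \leq 1/3$). Raising to the $k$-th power with $k \leq 1$ absorbs the constant $e$ into the $\ll$-symbol.

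There is no real obstacle here; the only thing to be a little careful about is that the upper bound in Proposition \ref{propintrandfcnmain} is stated in terms of $\log x$ and $\log\log x$, and one must verify that the $O(1)$ shift introduced by passing from $\log\log x$ to $\log\log P$ is absorbed by the denominator $1 + (1-k)\sqrt{\log\log P}$ uniformly in $k \in [2/3, 1]$. Once this trivial comparison is made, combining it with the direct identification of the Euler products and integrals completes the proof.
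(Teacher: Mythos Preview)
Your proposal is correct and is exactly the approach taken in the paper: the corollary is obtained by specializing Proposition \ref{propintrandfcnmain} to $m=0$ and setting $P = x^{1/e}$, with the trivial observation that $\log x \asymp \log P$ and $\log\log x = \log\log P + O(1)$ absorbs into the implied constant uniformly for $2/3 \le k \le 1$.
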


 We shall also need to apply a discrete version of Corollary \ref{mcres1} in our proof of Theorem \ref{lowerboundsfixedmodmean}. For this, we note the following result. 
\begin{lemma}\label{disccontlem}
For any large $P$, we have
\begin{align}
\label{Ediff}
\begin{split} \E \sum_{|j| \leq \frac{\log^{1.01}P}{2}} \int_{-\frac{1}{2\log^{1.01}P}}^{\frac{1}{2\log^{1.01}P}} |F_{P,f}^{\text{rand}}(1/2 + i\frac{j}{\log^{1.01}P} + it) - F_{P,f}^{\text{rand}}(1/2 + i\frac{j}{\log^{1.01}P})|^2 dt \ll \log^{0.99}P. 
\end{split}
\end{align} 
\end{lemma}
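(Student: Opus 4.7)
The plan is to establish the pointwise second-moment bound
\begin{equation*}
\E\big|F_{P,f}^{\mathrm{rand}}(\tfrac12 + i\tfrac{j}{L} + it) - F_{P,f}^{\mathrm{rand}}(\tfrac12 + i\tfrac{j}{L})\big|^2 \ll t^2 \log^3 P,
\end{equation*}
uniformly for $|j|\leq L/2$ with $L=\log^{1.01}P$ and $|t|\leq 1/(2L)$. Integrating this against $t$ yields $\ll \log^3 P/L^3$ per $j$, and summing over the $O(L)$ admissible $j$ produces $\ll \log^3 P/L^2 = \log^{0.98}P$, which is comfortably $\ll \log^{0.99}P$.

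For the pointwise bound, I would exploit the Euler product structure and the independence of the $h(p)$'s. Writing $F := F_{P,f}^{\mathrm{rand}}$ and expanding each Euler factor as $G_p(s) = \sum_{k\geq 0}\lambda(p^k)h(p)^k/p^{ks}$, the Steinhaus identity $\E[h(p)^{a}\overline{h(p)^{b}}] = \mathbf{1}_{a=b}$ gives
\begin{equation*}
\E\big[F(s_1)\overline{F(s_2)}\big] = \prod_{p\leq P}\sum_{k\geq 0}\frac{\lambda^2(p^k)}{p^{k(s_1+\overline{s_2})}}.
\end{equation*}
Taking $s_1 = \tfrac12 + i(j/L + t)$ and $s_2 = \tfrac12 + ij/L$ so that $s_1 + \overline{s_2} = 1 + it$, this factors as the truncated Rankin--Selberg product $B(t) = \prod_{p\leq P}(1-p^{-(1+it)})^{-2}(1-\alpha_p^2 p^{-(1+it)})^{-1}(1-\beta_p^2 p^{-(1+it)})^{-1}$. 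Since $\E|F(s_j)|^2 = B(0) =: A$, one gets the exact identity $\E|F(s_1) - F(s_2)|^2 = 2(A - \Re B(t))$, with $A \asymp \log P$ by \eqref{Eprodsquare} and \eqref{merten1}.

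To estimate $A - \Re B(t)$, I would Taylor-expand $\log B(t) - \log A$. The $k\geq 2$ contributions are absolutely convergent and uniformly Lipschitz in $t$, giving $O(|t|)$; the $k = 1$ terms, using that $|t|\log p \leq |t|\log P \ll \log^{-0.01}P$ is small, expand as
\begin{equation*}
\log B(t) - \log A = -itC_1 - \tfrac{t^2}{2}C_2 + O(|t|) + O(|t|^3\log^3 P),
\end{equation*}
where $C_k := \sum_{p\leq P}\lambda^2(p)\log^k p/p$; partial summation from \eqref{merten1} yields $C_1 = \log P + O(1)$ and $C_2 = \tfrac12\log^2 P + O(\log P)$. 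Setting $Z := \log B(t) - \log A$, one has $|Z| \ll \log^{-0.01}P$, so expanding $\Re(e^Z - 1) = \Re Z + \tfrac12(\Re Z^2 - \Im Z^2) + O(|Z|^3)$ combines the linear contribution $\Re Z \sim -t^2 C_2/2$ with the quadratic contribution $-\tfrac12\Im Z^2 \sim -t^2 C_1^2/2$ to give
\begin{equation*}
\Re\big(B(t)/A - 1\big) = -\tfrac{t^2}{2}(C_1^2 + C_2) + o(t^2 \log^2 P) \asymp -t^2 \log^2 P.
\end{equation*}
Multiplying by $A \asymp \log P$ yields the pointwise bound $\E|F(s_1) - F(s_2)|^2 \ll t^2 \log^3 P$, and the proof concludes by integration and summation as outlined above.

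The main subtlety lies in retaining \emph{both} the linear term $\Re Z \sim -t^2 C_2/2$ and the quadratic term $-\tfrac12\Im Z^2 \sim -t^2 C_1^2/2$ in the expansion of $\Re(e^Z - 1)$, since each is of the same order $t^2\log^2 P$ and neither dominates. The naive triangle-inequality bound $\E|F(s_1) - F(s_2)|^2 \leq 2\E|F(s_1)|^2 + 2\E|F(s_2)|^2 \ll \log P$ only produces $\log P$ after summation and thus falls short of the required $\log^{0.99}P$ by a factor of $\log^{0.01}P$; the gain above comes precisely from the fact that $|t|$ is small enough to render the Taylor expansion of $B(t)$ around $t = 0$ accurate.
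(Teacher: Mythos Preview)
Your argument is correct, but it takes a considerably more elaborate route than the paper's. Both proofs begin from the same identity: by orthogonality of the $h(n)$,
\[
\E\Big|F_{P,f}^{\mathrm{rand}}\big(\tfrac12 + i\tfrac{j}{L} + it\big) - F_{P,f}^{\mathrm{rand}}\big(\tfrac12 + i\tfrac{j}{L}\big)\Big|^2 \;=\; \sum_{\substack{n\geq 1\\ n\ P\text{-smooth}}} \frac{\lambda^2(n)\,|n^{-it}-1|^2}{n},
\]
which is exactly your $2(A - \Re B(t))$ written as a Dirichlet series rather than a product. From here the paper proceeds directly: it bounds $|n^{-it}-1| \ll \min(\log n/L,\,1)$ uniformly over the range of $t$, splits the $n$-sum at $n = P^{\log\log P}$, and uses \eqref{lambdansquare0} together with a Rankin-type tail bound to obtain $\ll (\log\log P)^2 \log^{0.98}P$ after integrating in $t$ and summing over $j$. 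No Euler-product manipulation or Taylor expansion of $\log B(t)$ is needed. Your approach instead passes to the product form of $B(t)$ and expands $\log B(t)$ around $t=0$, tracking $\Re Z$ and $\Im Z$ separately; this recovers the pointwise bound $\ll t^2\log^3 P$ (equivalently, $\sum_{n\ P\text{-smooth}} \lambda^2(n)\log^2 n/n \ll \log^3 P$) but with substantially more bookkeeping for no real gain, since either route comfortably gives \eqref{Ediff}.

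One small point of exposition: when you write $\Re Z \sim -t^2 C_2/2$ you silently discard the $O(|t|)$ error coming from the $k\geq 2$ terms. For small $|t|$ a genuine $O(|t|)$ real part would swamp $t^2 C_2$, so this deserves a word of justification. The cleanest fix is to note that $\overline{B(t)}=B(-t)$ forces $\Re Z(t)$ to be even in $t$, whence the real part of that error is in fact $O(t^2)$; alternatively, one checks directly that each contributing term has real part $\cos(k t\log p)-1 = O(t^2\log^2 p)$. Either way your claimed pointwise bound stands, but as written the step is a little quick.
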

\begin{proof}
  We follow the proof of \cite[Lemma 2]{Harper23} to see that the left-hand side in \eqref{Ediff} is
\begin{eqnarray}
& = & \sum_{|j| \leq \frac{\log^{1.01}P}{2}} \int_{-\frac{1}{2\log^{1.01}P}}^{\frac{1}{2\log^{1.01}P}} \E |\sum_{\substack{n=1, \\  n \; \text{is} \; P \; \text{smooth}}}^{\infty} \frac{h(n)\lambda(n)}{n^{1/2 + i\frac{j}{\log^{1.01}P}}} (n^{-it} - 1)|^2 dt \nonumber \\
& = & \sum_{|j| \leq \frac{\log^{1.01}P}{2}} \int_{-\frac{1}{2\log^{1.01}P}}^{\frac{1}{2\log^{1.01}P}} \sum_{\substack{n=1, \\  n \; \text{is} \; P \; \text{smooth}}}^{\infty} \frac{\lambda^2(n)|n^{-it}-1|^2}{n} dt . \nonumber
\end{eqnarray}
Note that $|n^{-it} - 1| \ll \min (|t|\log n, 1) \leq \min(\frac{\log n}{\log^{1.01}P}, 1)$, so that the contribution to the series from those $n \leq P^{\log\log P}$ is $\ll \frac{(\log\log P)^2}{\log^{0.02}P} \sum_{\substack{n=1, \\  n \; \text{is} \; P \; \text{smooth}}}^{\infty} \frac{\lambda^2(n)}{n}$.
Note that by \eqref{merten1},
\begin{align}
\label{lambdansquare0}
 \sum_{\substack{n=1, \\  n \; \text{is} \; P \; \text{smooth}}}^{\infty} \frac{\lambda^2(n)}{n} =\prod_{p \leq P}(1+\frac {\lambda(p)^{2}}{p}+O(\frac 1{p^{2}}))
=\prod_{p \leq P}(1-\frac {\lambda(p)^{2}}{p})^{-1}\cdot \prod_{p \leq P}(1+O(\frac 1{p^{2}})) \ll \exp (-\log (1-\frac {\lambda(p)^{2}}{p})) \ll \log P.
\end{align}
  It follows that the contribution to the series from those $n \leq P^{\log\log P}$ is $\ll (\log\log P)^2 \log^{0.98}P$. On the other hand, using the trivial bound $|n^{-it} - 1| \ll 1$ and arguing similar to \eqref{lambdansquare0}, we see that the contribution to the series from those $n > P^{\log\log P}$ is
\begin{align}
\label{lambdansquarenlarge}\ll e^{-\log\log P} \sum_{\substack{n=1, \\  n \; \text{is} \; P \; \text{smooth}}}^{\infty} \frac{\lambda^2(n)}{n^{1-1/\log P}} = e^{-\log\log P} \prod_{p \leq P} (1 - \frac{\lambda^2(p)}{p^{1-1/\log P}})^{-1} \ll e^{-\log\log P} \exp (\sum_{p \leq P}\frac {\lambda^2(p)}{p^{1-1/\log P}}).
\end{align}
 Note that when $p \leq P$, we have $e^{\log p/\log P} \ll 1$ so that by \eqref{merten1}, 
\begin{align*}
\sum_{p \leq P}\frac {\lambda^2(p)}{p^{1-1/\log P}} \ll \sum_{p \leq P}\frac {\lambda^2(p)}{p}  \ll \log \log P.
\end{align*}
  It follows from this that the expressions in \eqref{lambdansquarenlarge} is
\begin{align*}
 \ll e^{-\log\log P}\log P = 1 .
\end{align*}
 This completes the proof of the lemma.
\end{proof}

   We now apply Corollary \ref{mcres1}, Lemma \ref{disccontlem} and arguing as in the proof of Multiplicative Chaos Result 2 in \cite{Harper23} to obtain the following  discrete version of Corollary \ref{mcres1}.
\begin{lemma}\label{mcres2}
Uniformly for all large $P$ and $2/3 \leq k \leq 1$, we have
$$  \E( \frac{1}{\log^{1.01}P} \sum_{|j| \leq (\log^{1.01}P)/2} |F_{P,f}^{\text{rand}}(1/2 + i\frac{j}{\log^{1.01}P})|^2 )^k \ll \Biggl(\frac{\log P}{1 + (1-q)\sqrt{\log\log P}}\Biggr)^{k} . $$
\end{lemma}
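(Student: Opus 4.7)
The statement is a discretized counterpart of Corollary \ref{mcres1}, and the hint already tells us what to do: combine the continuous bound from Corollary \ref{mcres1} with the comparison estimate of Lemma \ref{disccontlem}, following the strategy of Multiplicative Chaos Result 2 in \cite{Harper23}. Throughout the proposal I assume the $q$ in the denominator of the statement is a typo for $k$.

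The plan is to partition the interval $[-1/2,1/2]$ into $\lfloor \log^{1.01}P \rfloor + 1$ sub-intervals of length $1/\log^{1.01}P$ centered at the sample points $j/\log^{1.01}P$, and to compare the point values $|F_{P,f}^{\text{rand}}(1/2+ij/\log^{1.01}P)|^2$ with the $L^2$-average of $F_{P,f}^{\text{rand}}$ on each sub-interval. Concretely, using the elementary bound $|A|^{2}\le 2|B|^{2}+2|A-B|^{2}$ with $A=F_{P,f}^{\text{rand}}(1/2+ij/\log^{1.01}P)$ and $B=F_{P,f}^{\text{rand}}(1/2+ij/\log^{1.01}P+it)$, then averaging over $t\in[-1/(2\log^{1.01}P), 1/(2\log^{1.01}P)]$ (which multiplies by $\log^{1.01}P$ after integration) and summing over $|j|\le \log^{1.01}P/2$, I obtain the pointwise inequality
\begin{align*}
\frac{1}{\log^{1.01}P}\sum_{|j|\le (\log^{1.01}P)/2}|F_{P,f}^{\text{rand}}(1/2+ij/\log^{1.01}P)|^{2}
&\le 2\int_{-1/2}^{1/2}|F_{P,f}^{\text{rand}}(1/2+it)|^{2}\,dt \\
&\quad + 2\sum_{|j|\le (\log^{1.01}P)/2}\int_{-1/(2\log^{1.01}P)}^{1/(2\log^{1.01}P)}\bigl|F_{P,f}^{\text{rand}}(1/2+ij/\log^{1.01}P+it)-F_{P,f}^{\text{rand}}(1/2+ij/\log^{1.01}P)\bigr|^{2}\,dt.
\end{align*}

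Next, since $0\le k\le 1$, I can apply the subadditivity inequality $(a+b)^{k}\le a^{k}+b^{k}$ for non-negative $a,b$ and then take expectation. For the main term I invoke Corollary \ref{mcres1} directly, which yields the bound $\ll (\log P/(1+(1-k)\sqrt{\log\log P}))^{k}$. For the discretization-error term I use Jensen's inequality $\mathbb{E}X^{k}\le (\mathbb{E}X)^{k}$ (valid for $0\le k\le 1$ by concavity of $x\mapsto x^{k}$) together with Lemma \ref{disccontlem}, which controls the expectation of that error by $\log^{0.99}P$, giving a contribution of $\ll (\log^{0.99}P)^{k}$.

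The final step is a routine size comparison: since $(1-k)\sqrt{\log\log P}\le \sqrt{\log\log P}$ grows much more slowly than the savings factor $\log^{0.02}P$, we have $\log^{0.99}P \ll \log P/(1+(1-k)\sqrt{\log\log P})$ uniformly in the range $2/3\le k\le 1$, so the error term is absorbed into the main term and the desired bound follows. No single step is genuinely hard here — the main thing to get right is that the passage from the continuous integral to the discrete sum, via the Cauchy–Schwarz-style splitting and subadditivity at the level of $k$-th moments, only costs constants plus a negligible error term controlled by Lemma \ref{disccontlem}.
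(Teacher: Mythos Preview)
Your proposal is correct and follows exactly the approach the paper indicates (it simply cites Corollary \ref{mcres1}, Lemma \ref{disccontlem}, and the proof of Multiplicative Chaos Result 2 in \cite{Harper23}); the splitting via $|A|^2\le 2|B|^2+2|A-B|^2$, subadditivity of $x\mapsto x^k$, and the size comparison $\log^{0.99}P\ll \log P/(1+(1-k)\sqrt{\log\log P})$ are precisely the ingredients of that argument. The only cosmetic point is that the union of the sub-intervals may protrude past $[-1/2,1/2]$ by $O(1/\log^{1.01}P)$, but this is harmless for the application of Corollary \ref{mcres1}.
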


\section{Proof of Theorem \ref{lowerboundsfixedmodmean}} 
\label{outline}

\subsection{Initial Treatments}
  \label{thm1small}
  
We may restrict to the range $2/3 \leq k \leq 1$ throughout the proof, since if $k$ is smaller we can apply H\"{o}lder's inequality to bound $\Echar |\sum_{n \leq x} \chi(n)\lambda(n)|^{2k}$ from above by $(\Echar |\sum_{n \leq x} \chi(n)\lambda(n)|^{4/3})^{3k/2}$, and then use the $k=2/3$ case. We may assume $L = \min\{x,q/x\}$ is large throughout the proof for otherwise the assertion of the Theorem holds trivially. We now set a parameter $P$ to be the largest number below $\exp(\log^{1/6}L)$ such that $\log^{0.01}P$ is an integer. It follows that $\log P \asymp \log^{1/6}L$ and $\log\log P \asymp \log\log L$, so that in order to establish Theorem \ref{lowerboundsfixedmodmean},  it suffices to show that
\begin{equation}\label{stpmaindisplay}
\Echar |\sum_{n \leq x} \chi(n)\lambda(n)|^{2k} \ll \Biggl(\frac{x}{1 + (1-k)\sqrt{\log\log P}} \Biggr)^k .
\end{equation}

Set $M := 2\log^{1.02}P$. We also set for each integer $m$ satisfying $|m| \leq M$ and each character $\chi$ mod $q$, 
\begin{align}
\label{Skf}
 S_{m,f}(\chi) := \Re \sum_{p \leq P} (\frac{\chi(p)\lambda(p)}{p^{1/2 + im/\log^{1.01}P}} + \frac{\chi(p)^2(\lambda(p^2)-\lambda(p)^2/2}{p^{1 + 2im/\log^{1.01}P}}).
\end{align}
 
  Let $P(n)$ denote the largest prime factor of $n$. We apply H\"{o}lder's inequality and orthogonality of Dirichlet characters to see that
\begin{align}
\label{EPnsmall} 
 \Echar |\sum_{n \leq x, P(n) \leq x^{1/\log\log x}} \chi(n)\lambda(n)|^{2k} \leq \Biggl(\Echar |\sum_{n \leq x, P(n) \leq x^{1/\log\log x}} \chi(n)\lambda(n)|^{2} \Biggr)^k = \Biggl(\sum_{n \leq x, P(n) \leq x^{1/\log\log x}}\lambda(n)^{2}\Biggr)^k.
\end{align}
We now follow the proof of \cite[Theorem 7.6]{MVa1} to apply Rankin's trick to see that for some $\sigma>0$, 
\begin{align*}
 \sum_{n \leq x, P(n) \leq x^{1/\log\log x}}\lambda(n)^{2} \leq  \sum_{n \leq x, P(n) \leq x^{1/\log\log x}}\lambda(n)^{2} (\frac {x}{n})^{\sigma} \leq  x^{\sigma} \sum_{P(n) \leq x^{1/\log\log x}}\frac {\lambda(n)^{2}}{n^{\sigma}}.
\end{align*}
  Note that by \eqref{lambdabound}, we see that when $\sigma>0$, 
\begin{align*}
  \sum_{P(n) \leq x^{1/\log\log x}}\frac {\lambda(n)^{2}}{n^{\sigma}}=\prod_{p \leq x^{1/\log\log x}}(1+\frac {\lambda(p)^{2}}{p^{\sigma}}+O(\frac 1{p^{2\sigma}}))
=\prod_{p \leq x^{1/\log\log x}}(1-\frac {\lambda(p)^{2}}{p^{\sigma}})^{-1}\cdot \prod_{p \leq x^{1/\log\log x}}(1+O(\frac 1{p^{2\sigma}})).
\end{align*}
  We now set $y=x^{1/\log\log x}$ and $\sigma=1-3/\log y$ to see that the last product above is convergent. Moreover, when $x$ is large enough, we have $2\sigma>1$, so that
\begin{align}
\label{sumsigmabound}
\begin{split}
  \sum_{P(n) \leq x^{1/\log\log x}}\frac {\lambda(n)^{2}}{n^{\sigma}}\ll & \prod_{p \leq x^{1/\log\log x}}(1-\frac {\lambda(p)^{2}}{p^{\sigma}})^{-1}=\exp \big(\sum_{p \leq y}-\log (1-\frac {\lambda(p)^{2}}{p^{\sigma}})\big)=\exp \big(\sum_{p \leq y}\frac {\lambda(p)^{2}}{p^{\sigma}}+O(\frac 1{p^{2\sigma}})\big) \\
\ll &
\exp \big(\sum_{p \leq y}\frac {\lambda(p)^{2}}{p^{\sigma}}\big).
\end{split}
\end{align}
  Note that when $1-4/\log y \leq \sigma \leq 1$ and $u \leq y$, we have $u^{1-\sigma}=\exp((1-\sigma)\log u)=1+O((1-\sigma)\log u)$. It follows from this, \eqref{merten1} and partial summation that
 \begin{align*}
  \sum_{p \leq y}\frac {\lambda(p)^{2}}{p^{\sigma}}=&\int^y_1u^{1-\sigma}d(\log\log u+O(1))=\int^y_1u^{1-\sigma}d\log\log u+O(1)
=\int^y_1(1+O((1-\sigma)\log u))d\log\log u+O(1) \\
=& \log \log y+O(1). 
\end{align*}

  We apply the above and \eqref{sumsigmabound} to see that
\begin{align*}
\begin{split}
  \sum_{P(n) \leq x^{1/\log\log x}}\frac {\lambda(n)^{2}}{n^{\sigma}}
\ll & \log y.
\end{split}
\end{align*}
   We deduce from the above and \eqref{sumlambdapsquare} that 
\begin{align*}
 \sum_{n \leq x, P(n) \leq x^{1/\log\log x}}\lambda(n)^{2} \ll  x^{\sigma}\log y \ll \frac {x}{\log \log x}. 
\end{align*}
  We substitute the above estimation into \eqref{EPnsmall} to see that 
$$\Echar |\sum_{n \leq x, P(n) \leq x^{1/\log\log x}} \chi(n)\lambda(n)|^{2k} \ll (\frac {x}{\log \log x})^k, $$
  which gives a negligible contribution in Theorem \ref{lowerboundsfixedmodmean}.

 It therefore suffices to bound $\Echar |\sum_{n \leq x, P(n) > x^{1/\log\log x}} \chi(n)\lambda(n)|^{2k}$. Using the partition of unity $g_j$ from Lemma \ref{apres}, we may rewrite $\Echar |\sum_{n \leq x, P(n) > x^{1/\log\log x}} \chi(n)\lambda(n)|^{2k}$ as
\begin{eqnarray}
&& \Echar \prod_{i=-M}^{M} (\sum_{j=-N}^{N+1} g_{j}(S_{i,f}(\chi))) \Biggl|\sum_{\substack{n \leq x, \\ P(n) > x^{1/\log\log x}}} \chi(n)\lambda(n) \Biggr|^{2k} \nonumber \\
& = & \sum_{-N \leq j(-M), ..., j(0), ..., j(M) \leq N+1} \Echar \prod_{i=-M}^{M} g_{j(i)}(S_{i,f}(\chi)) \Biggl|\sum_{\substack{n \leq x, \\ P(n) > x^{1/\log\log x}}} \chi(n)\lambda(n) \Biggr|^{2k} \nonumber \\
& = & \sum_{-N \leq j(-M) , ... , j(0), ..., j(M) \leq N+1} \sigma(\textbf{j}) \E^{\textbf{j}} \Biggl|\sum_{\substack{n \leq x, \\ P(n) > x^{1/\log\log x}}} \chi(n)\lambda(n) \Biggr|^{2k} , \nonumber
\end{eqnarray}
where we set $\sigma(\textbf{j}) := \Echar \prod_{i=-M}^{M} g_{j(i)}(S_{i,f}(\chi))$ for any $(2M+1)$-vector $\textbf{j}$ from the outer sum above, and we define $\E^{\textbf{j}} W := \sigma(\textbf{j})^{-1} \Echar W \prod_{i=-M}^{M} g_{j(i)}(S_{i,f}(\chi))$ for all functions $W(\chi)$. Note that for the constant function $1$, we have $\E^{\textbf{j}} 1 = 1$ for all choices of the vector $\textbf{j}$. We then apply H\"older's inequality to $\E^{\textbf{j}}$ to see that 
\begin{align}
\label{EEj}
\begin{split} \Echar \Biggl|\sum_{\substack{n \leq x, \\ P(n) > x^{1/\log\log x}}} \chi(n)\lambda(n) \Biggr|^{2k} \leq \sum_{-N \leq j(-M) , ..., j(0), ..., j(M) \leq N+1} \sigma(\textbf{j})\Biggl( \E^{\textbf{j}} \Biggl|\sum_{\substack{n \leq x, \\ P(n) > x^{1/\log\log x}}} \chi(n)\lambda(n) \Biggr|^{2} \Biggr)^{k}. \end{split}
\end{align}

\subsection{Transition to random variables}
\label{subsecmaingorandom}

  We apply \eqref{alpha}, \eqref{alphalambdarel} to the complex numbers $a_m(p)=\lambda(p)p^{-im/\log^{1.01}P}$, $a_m(p^2)=(\lambda(p^2)-\lambda^2(p)/2)p^{-2im/\log^{1.01}P}$ to see that we have
\begin{align}
\label{abound}
\begin{split} 
 |a_m(p)|, |a_m(p^2)| \leq 3. 
\end{split}
\end{align}
  Moreover, we take $c(n)=\lambda(n)$ to see that 
\begin{align}
\label{lambdancond}
 \sum_{n \leq x} \tilde{d}(n) |\lambda(n)|^2 =\sum_{\substack{d \leq x \\ p \mid d \Rightarrow p \leq P}}\sum_{\substack{n \leq x \\ d|n}}\lambda^2(n).
\end{align} 

   We now consider the sum $\sum_{\substack{x/2 < n \leq x \\ d|n}}\lambda^2(n)$ to see that we have
\begin{align}
\label{lambdancond0}
 \sum_{\substack{x/2 < n \leq x \\ d|n}}\lambda^2(n) \ll \sum_{\substack{d|n}}\lambda^2(n)\Phi(\frac {n}{x}), 
\end{align} 
  where $\Phi$ is a smooth, non-negative function compactly supported on $[1/4, 3/2]$ satisfying $\Phi(x) \leq 1$ for all $x$ and $\Phi(x) =1$
for $x\in [1/2,1]$. The Mellin transform ${\widehat \Phi}(s)$ of $\Phi$ is defined for any complex number $s$ by
\begin{equation*}
{\widehat \Phi}(s) = \int\limits_{0}^{\infty} \Phi(x)x^{s}\frac {\dif x}{x}.
\end{equation*}
    Note that integration by parts shows that $\widehat{\Phi}(s)$ is a function satisfying the bound
\begin{align}
\label{boundsforphi}
  \widehat{\Phi}(s) \ll \min (1, |s|^{-1}(1+|s|)^{-E}),
\end{align}
for all $\Re(s) > 0$, and integers $E>0$. 

   Now the Mellin inversion leads to 
\begin{align}
\label{Perronprime1}
\begin{split}
    \sum_{\substack{n \\ d|n}}\lambda^2(n)\Phi(\frac {n}{x}) =&\frac 1{ 2\pi i}\int\limits_{(2)}(\sum_{\substack{n \geq 1 \\ c|n}}\frac {\lambda(n)^2}{n^{s}})\widehat{\Phi}(s) \frac{x^s}{s} \dif s. 
\end{split}
\end{align}
 We note that it is shown in the proof of \cite[Lemma 2.10]{G&Zhao25-11} that
\begin{align}
\label{Fd}
\begin{split}
 \sum_{\substack{n \geq 1 \\ d|n}}\frac {|\lambda(n)|^2}{n^{s}}=&\frac {F(s)}{d^s}\prod_{\substack{ p|d \\ p^{\nu_p} \| d}}\Big ( \sum_{j=0}^{\infty} \frac{|\lambda(p^{\nu_p+j})|^2}{p^{js}} \Big ) \frac{\zeta_p(2s)}{\zeta_p(s)L_p(s, \operatorname{sym}^2 f)}.
\end{split}
\end{align}
 where $L_p$ denotes the local factor at the prime $p$ in the Euler product of $L$ for any $L$-function and where $\zeta(s)$ is the Riemann zeta function,  $L(s, \operatorname{sym}^2 f)$ defined in \eqref{Lsymexp}. Also, it is shown in \cite[(2.25)]{G&Zhao25-11} that 
\begin{align}
\label{Frel1}
\begin{split}
 F(s)\zeta(2s)=\zeta(s)L(s, \operatorname{sym}^2 f).
\end{split}
\end{align}

   We now shift the contour of the integral in \eqref{Perronprime1} to $\Re(s)=1/2+\varepsilon$ to pick up a simple pole at $s=1$ of $\zeta(s)$. By \eqref{Frel1}, we see that the corresponding residue equals
\begin{align}
\label{lambdancond1}
\begin{split}
    \frac x{d}\prod_{\substack{ p|d \\ p^{\nu_p} \| d}}\Big ( \sum_{j=0}^{\infty} \frac{|\lambda(p^{\nu_p+j})|^2}{p^{j}} \Big ) \frac {\zeta_p(2) L(1, \operatorname{sym}^2 f)}{\zeta_p(1)L_p(1, \operatorname{sym}^2 f) \zeta(2)} =: g(d)x\frac {L(1, \operatorname{sym}^2 f)}{\zeta(2)},
\end{split}
\end{align}
  where one checks that $g$ is a multiplicative function satisfying $g(1)=1, g(p) =\lambda(p)^2/p+O(1/p^2), g(p^i)=O(1/p^i)$ for $i \geq 2$.

  Moreover, it follows from \eqref{boundsforphi} and \eqref{Fd} that the integral on the new line is
\begin{align}
\label{dfactorboundfurthersimplified}
\begin{split}
  \ll &  \frac {x^{1/2+\varepsilon}}{d^{1/2+\varepsilon}}\prod_{\substack{ p|d \\ p^{\nu_p} \| d}}\Big ( \sum_{j=0}^{\infty} \frac{|\lambda(p^{\nu_p+j})|^2}{p^{j(1/2+\varepsilon)}} \Big ) \ll   \frac {x}{d}\prod_{\substack{ p|d \\ p^{\nu_p} \| d}}\Big ( \sum_{j=0}^{\infty} \frac{|\lambda(p^{\nu_p+j})|^2}{p^{j(1/2+\varepsilon)}} \Big ) := g_1(d)x,
\end{split}
\end{align}
   where the middle estimation above follows from the observation that $d \leq x$ and one checks that $g_1$ is a multiplicative function satisfying $g_1(1)=1, g_1(p) =\lambda(p)^2/p+O(1/p^2), g_1(p^i)=O(1/p^i)$ for $i \geq 2$.

   We then deduce from \eqref{lambdancond0}, \eqref{lambdancond1} and \eqref{dfactorboundfurthersimplified} that we have
\begin{align*}
 \sum_{\substack{x/2 < n \leq x \\ d|n}}\lambda^2(n) \ll g(d)x+  g_1(d)x. 
\end{align*} 
  We sum dyadically to derive from the above that
 \begin{align*}
 \sum_{\substack{n \leq x \\ d|n}}\lambda^2(n) \ll g(d)x+  g_1(d)x. 
\end{align*} 

   It follows from this, \eqref{lambdancond} and the estimation $1+x \leq e^x$ for all real number $x$ that
\begin{align}
\label{lambdancond4}
\begin{split}
 & \sum_{n \leq x} \tilde{d}(n) |\lambda(n)|^2 \\
=& x\sum_{\substack{d \leq x \\ p \mid d \Rightarrow p \leq P}}g(d)+x\sum_{\substack{d \leq x \\ p \mid d \Rightarrow p \leq P}}g_1(d) \\
\ll & 
x\prod_{p | P}(1+g(p)+O(\frac 1{p^2}))+x\prod_{p | P}(1+g_1(p)+O(\frac 1{p^2})) \\
 \ll & x\exp \big(\sum_{p | P|}(|g(p)|+O(\frac 1{p^2}))\big)+ x\exp \big(\sum_{p | P|}(|g(p)|+O(\frac 1{p^2}))\big) \\
\ll & x\exp \big(\sum_{p | P|}\frac {\lambda^2(p)}{p}\big) \\
\ll & x \log P,
\end{split}
\end{align} 
  where the last estimation above follows from \eqref{merten1}.

  In view of \eqref{abound} and \eqref{lambdancond4}, we are in the position to apply Proposition \ref{condexpprop} with $Y=2M+1$ by setting $c_0=3$ there to see that when $x P^{4000((2M+1)/\delta)^2 \log(N\log P)} < q$, we have
$$ \E^{\textbf{j}} \Biggl|\sum_{\substack{n \leq x, \\ P(n) > x^{1/\log\log x}}} \chi(n)\lambda(n) \Biggr|^{2} = \frac{1}{\sigma(\textbf{j})} \Biggl( \E \prod_{i=-M}^{M} g_{j(i)}(S_{i,f}(h)) \Biggl|\sum_{\substack{n \leq x, \\ P(n) > x^{1/\log\log x}}} h(n)\lambda(n) \Biggr|^{2} + O\left(\frac{x}{(N \log P)^{(2M+1)(1/\delta)^2}} \right) \Biggr), $$
 where $h$ is a Steinhaus random multiplicative function and $S_{i,f}(h)$ is defined similar to $S_{m,f}(\chi)$ given in \eqref{Skf}. It follows from this and \eqref{EEj} that
\begin{align*}
\begin{split} 
 & \Echar \Biggl|\sum_{\substack{n \leq x, \\ P(n) > x^{1/\log\log x}}} \chi(n)\lambda(n) \Biggr|^{2k} \\
\leq & \sum_{-N \leq j(-M) , ..., j(0), ..., j(M) \leq N+1} \sigma(\textbf{j})\Biggl( \frac{1}{\sigma(\textbf{j})} \Biggl( \E \prod_{i=-M}^{M} g_{j(i)}(S_{i,f}(h)) \Biggl|\sum_{\substack{n \leq x, \\ P(n) > x^{1/\log\log x}}} h(n)\lambda(n) \Biggr|^{2} + O\left(\frac{x}{(N \log P)^{(2M+1)(1/\delta)^2}} \right) \Biggr) \Biggr)^{k}  \\
\ll & \sum_{-N \leq j(-M) , ..., j(0), ..., j(M) \leq N+1} \sigma(\textbf{j})\Biggl( \frac{1}{\sigma(\textbf{j})} \E \prod_{i=-M}^{M} g_{j(i)}(S_{i,f}(h)) \Biggl|\sum_{\substack{n \leq x, \\ P(n) > x^{1/\log\log x}}} h(n)\lambda(n) \Biggr|^{2} \Biggr)^k \\
& +\sum_{-N \leq j(-M) , ..., j(0), ..., j(M) \leq N+1} \sigma(\textbf{j})\Biggl( \frac{1}{\sigma(\textbf{j})}O\left(\frac{x}{(N \log P)^{(2M+1)(1/\delta)^2}} \right)  \Biggr)^{k}.
\end{split}
\end{align*}

 Note that $\sum_{\textbf{j}} \sigma(\textbf{j}) = \Echar \prod_{i=-M}^{M} (\sum_{j=-N}^{N+1} g_{j}(S_{i,f}(\chi))) = \Echar 1 = 1$. We thus apply H\"{o}lder's inequality to see that
\begin{align*}
\begin{split} 
 & \sum_{-N \leq j(-M) , ..., j(0), ..., j(M) \leq N+1} \sigma(\textbf{j})\Biggl( \frac{1}{\sigma(\textbf{j})}O\left(\frac{x}{(N \log P)^{(2M+1)(1/\delta)^2}} \right)  \Biggr)^{k} \\
\ll & \Biggl( \sum_{-N \leq j(-M) , ..., j(0), ..., j(M) \leq N+1} \sigma(\textbf{j}) \cdot \frac{1}{\sigma(\textbf{j})} \frac{x}{(N \log P)^{(2M+1)(1/\delta)^2}} \Biggr)^{k} = \Biggl( x (\frac{(2N+2)}{(N \log P)^{(1/\delta)^2}})^{2M+1} \Biggr)^{k}. 
\end{split}
\end{align*}
This gives a negligible contribution to \eqref{stpmaindisplay}. 

We now define $\sigma^{\text{rand}}(\textbf{j}) := \E \prod_{i=-M}^{M} g_{j(i)}(S_{i,f}(h))$ for all $(2M+1)$-vectors $\textbf{j}$ to see by Proposition \ref{boxprobprop} that $\sigma(\textbf{j})^{1-k} \ll \sigma^{\text{rand}}(\textbf{j})^{1-k} + (\frac{1}{(N \log P)^{(2M+1)(1/\delta)^2}})^{1-k}$. It follows that
\begin{align*}
\begin{split} 
& \sum_{-N \leq j(-M) , ..., j(0), ..., j(M) \leq N+1} \sigma(\textbf{j})\Biggl( \frac{1}{\sigma(\textbf{j})} \E \prod_{i=-M}^{M} g_{j(i)}(S_{i,f}(h)) \Biggl|\sum_{\substack{n \leq x, \\ P(n) > x^{1/\log\log x}}} h(n)\lambda(n) \Biggr|^{2} \Biggr)^k  \\
 \ll & \sum_{\textbf{j}} \sigma^{\text{rand}}(\textbf{j})\Biggl( \frac{1}{\sigma^{\text{rand}}(\textbf{j})} \E \prod_{i=-M}^{M} g_{j(i)}(S_{i,f}(h)) \Biggl|\sum_{\substack{n \leq x, \\ P(n) > x^{1/\log\log x}}} h(n)\lambda(n)  \Biggr|^{2} \Biggr)^k  \\
& + (\frac{1}{(N \log P)^{(2M+1)(1/\delta)^2}})^{1-k} \sum_{\textbf{j}} \Biggl( \E \prod_{i=-M}^{M} g_{j(i)}(S_{i,f}(h)) \Biggl|\sum_{\substack{n \leq x, \\ P(n) > x^{1/\log\log x}}} h(n)\lambda(n)  \Biggr|^{2} \Biggr)^k. 
\end{split}
\end{align*}
We now apply H\"older's inequality again to the sum over $\textbf{j}$ and recall that the $g_j$ form a partition of unity to see that
\begin{align*}
\begin{split} 
& (\frac{1}{(N \log P)^{(2M+1)(1/\delta)^2}})^{1-k} \sum_{\textbf{j}} \Biggl( \E \prod_{i=-M}^{M} g_{j(i)}(S_{i,f}(h)) \Biggl|\sum_{\substack{n \leq x, \\ P(n) > x^{1/\log\log x}}} h(n)\lambda(n)  \Biggr|^{2} \Biggr)^k \\
\ll & (\frac{1}{(N \log P)^{(2M+1)(1/\delta)^2}})^{1-k} \cdot ((2N+2)^{2M+1})^{1-k} \cdot \Biggl( \sum_{\textbf{j}} \E \prod_{i=-M}^{M} g_{j(i)}(S_{i,f}(h)) \Biggl|\sum_{\substack{n \leq x, \\ P(n) > x^{1/\log\log x}}} h(n)\lambda(n)  \Biggr|^{2} \Biggr)^k  \\
 = & \Biggl( (\frac{(2N+2)}{(N \log P)^{(1/\delta)^2}})^{2M+1} \Biggr)^{1-k} \Biggl( \E \Biggl|\sum_{\substack{n \leq x, \\ P(n) > x^{1/\log\log x}}} h(n)\lambda(n) \Biggr|^{2} \Biggr)^k  \\
=& \Biggl( (\frac{(2N+2)}{(N \log P)^{(1/\delta)^2}})^{2M+1} \Biggr)^{1-k} \Biggl( \sum_{\substack{n \leq x, \\ P(n) > x^{1/\log\log x}}} \lambda^2(n) \Biggr)^k \\
\ll & \Biggl( (\frac{(2N+2)}{(N \log P)^{(1/\delta)^2}})^{2M+1} \Biggr)^{1-k} x^k, 
\end{split}
\end{align*}
  where the last estimation above follows from \eqref{lambdasquareasymp}. As the last expression above is $ \ll e^{-(1-k)\log\log P} x^k$, its contribution to  \eqref{stpmaindisplay} is also negligible.

We conclude this section by noting that in order to establish Theorem \ref{lowerboundsfixedmodmean}, it remains to show that
\begin{equation}\label{stpnextdisplay}
\sum_{-N \leq j(-M) , ..., j(0), ..., j(M) \leq N+1} \sigma^{\text{rand}}(\textbf{j}) \Biggl( \E^{\textbf{j}, \text{rand}} \Biggl|\sum_{\substack{n \leq x, \\ P(n) > x^{1/\log\log x}}} h(n)\lambda(n) \Biggr|^{2} \Biggr)^{k} \ll \Biggl(\frac{x}{1 + (1-k)\sqrt{\log\log P}} \Biggr)^k.
\end{equation}
 Here we define $\E^{\textbf{j}, \text{rand}} W := \sigma^{\text{rand}}(\textbf{j})^{-1} \E W \prod_{i=-M}^{M} g_{j(i)}(S_{i,f}(h))$ for all random variables $W$. 

\subsection{Connecting to Euler products}
\label{subsecpasseuler}

As the various expressions $\prod_{i=-M}^{M} g_{j(i)}(S_{i,f}(h))$ in the definition of $\E^{\textbf{j}, \text{rand}}$ involve only the $h(p)$ for $p \leq P$ and the $h(p)$ are independent random variables with mean zero, we expand the square and keep in mind that $P < x^{1/\log\log x}$ to see that
\begin{align}
\label{smoothrandsplit}
\begin{split} 
 & \sum_{\textbf{j}} \sigma^{\text{rand}}(\textbf{j}) \Biggl( \E^{\textbf{j}, \text{rand}} \Biggl|\sum_{\substack{n \leq x, \\ P(n) > x^{1/\log\log x}}} h(n)\lambda(n) \Biggr|^{2} \Biggr)^{k} \\
=&  \sum_{\textbf{j}} \sigma^{\text{rand}}(\textbf{j}) \Biggl( \E^{\textbf{j}, \text{rand}} \sum_{\substack{m \leq x, \\ P(m) > x^{1/\log\log x} , \\ p|m \Rightarrow p > P}} \lambda^2(m)\Biggl|\sum_{\substack{n \leq x/m, \\ n \; \text{is} \; P \; \text{smooth}}} h(n)\lambda(n) \Biggr|^{2} \Biggr)^{k} \\
\ll & \Biggl(\E^{\textbf{j}, \text{rand}} \sum_{\substack{x^{1/\log\log x} < m \leq x, \\ p|m \Rightarrow p > P}} \frac{\lambda^2(m)X}{m} \int_{m}^{m(1+1/X)} |\sum_{\substack{n \leq x/t, \\ n \; \text{is} \; P \; \text{smooth}}} h(n)\lambda(n)|^{2} dt \Biggr)^k  \\
& + \Biggl(\E^{\textbf{j}, \text{rand}} \sum_{\substack{x^{1/\log\log x} < m \leq x, \\ p|m \Rightarrow p > P}} \frac{\lambda^2(m)X}{m} \int_{m}^{m(1+1/X)} |\sum_{\substack{x/t < n \leq x/m, \\ n \; \text{is} \; P \; \text{smooth}}} h(n)\lambda(n)|^{2} dt \Biggr)^k.
\end{split}
\end{align}
  where the last estimation above follows by replacing the condition that $P(m) > x^{1/\log\log x}$ with the weaker condition that $m > x^{1/\log\log x}$ and by introducing an integral to smooth out on a scale of $1/X$. Here and in what follows, we set $X = e^{\sqrt{\log x}}$. 

 As $\sum_{\textbf{j}} \sigma^{\text{rand}}(\textbf{j}) = \E \prod_{i=-M}^{M} (\sum_{j=-N}^{N+1} g_{j}(S_{i,f}(h))) = \E 1 = 1$, we apply H\"{o}lder's inequality to the sum over $\textbf{j}$ to see that 
\begin{align*}
\begin{split} 
& \Biggl(\E^{\textbf{j}, \text{rand}} \sum_{\substack{x^{1/\log\log x} < m \leq x, \\ p|m \Rightarrow p > P}} \frac{\lambda^2(m)X}{m} \int_{m}^{m(1+1/X)} |\sum_{\substack{x/t < n \leq x/m, \\ n \; \text{is} \; P \; \text{smooth}}} h(n)\lambda(n)|^{2} dt \Biggr)^k \\
 \ll & \sum_{\textbf{j}} \sigma^{\text{rand}}(\textbf{j}) \Biggl( \E^{\textbf{j}, \text{rand}} \sum_{\substack{x^{1/\log\log x} < m \leq x, \\ p|m \Rightarrow p > P}} \frac{\lambda^2(m)X}{m} \int_{m}^{m(1+1/X)} |\sum_{\substack{x/t < n \leq x/m, \\ n \; \text{is} \; P \; \text{smooth}}} h(n)\lambda(n)|^{2} dt \Biggr)^k \\
 \leq & \Biggl(\sum_{\textbf{j}} \sigma^{\text{rand}}(\textbf{j}) \E^{\textbf{j}, \text{rand}} \sum_{\substack{x^{1/\log\log x} < m \leq x, \\ p|m \Rightarrow p > P}} \frac{\lambda^2(m)X}{m} \int_{m}^{m(1+1/X)} |\sum_{\substack{x/t < n \leq x/m, \\ n \; \text{is} \; P \; \text{smooth}}} h(n)\lambda(n)|^{2} dt \Biggr)^k. \end{split}
\end{align*}
 
  Note that $\sum_{\textbf{j}} \sigma^{\text{rand}}(\textbf{j})\E^{\textbf{j}, \text{rand}}W=\E W$ for all random variables $W$. We use this and apply the orthogonality of random multiplicative functions to see that the last expression above is
\begin{align}
\label{Erandnlarge}
\begin{split}  
 & \Biggl(\E \sum_{\substack{x^{1/\log\log x} < m \leq x, \\ p|m \Rightarrow p > P}} \frac{\lambda^2(m)X}{m} \int_{m}^{m(1+1/X)} |\sum_{\substack{x/t < n \leq x/m, \\ n \; \text{is} \; P \; \text{smooth}}} h(n)\lambda(n)|^{2} dt \Biggr)^k \\
=& \Biggl(\sum_{\substack{x^{1/\log\log x} < m \leq x, \\ p|m \Rightarrow p > P}} \frac{\lambda^2(m)X}{m} \int_{m}^{m(1+1/X)} \big(\sum_{\substack{x/t < n \leq x/m, \\ n \; \text{is} \; P \; \text{smooth}}} \lambda(n)^{2} \big)dt \Biggr)^k  \\
\ll & \Biggl(\sum_{\substack{x^{1/\log\log x} < m \leq x, \\ p|m \Rightarrow p > P}} \frac{\lambda^2(m)X}{m} \int_{m}^{m(1+1/X)} \big(\sum_{\substack{x/t < n \leq x/m}} \lambda(n)^{2} \big)dt \Biggr)^k  \\
\ll & \Biggl(\sum_{\substack{x^{1/\log\log x} < m \leq x, \\ p|m \Rightarrow p > P}} \frac{\lambda^2(m)X}{m} \int_{m}^{m(1+1/X)} \big(\frac xm-\frac xt+(\frac x{m})^{3/5} \big)dt \Biggr)^k,  
\end{split}
\end{align}
  where the last estimation above follows from \eqref{lambdasquareasymp}. Note that when $m \leq t \leq m(1+1/X)$, we have $\frac xm-\frac xt \leq \frac xm-\frac x{m(1+1/X)} \ll x/(mX)$. It follows that the last expression in \eqref{Erandnlarge} is 
\begin{align}
\label{EEjremaindersimplified}
\begin{split}  
 \ll & \Biggl(\sum_{\substack{x^{1/\log\log x} < m \leq x, \\ p|m \Rightarrow p > P}} \frac{\lambda^2(m)X}{m} \int_{m}^{m(1+1/X)} \big(\frac x{mX}+(\frac x{m})^{3/5} \big)dt \Biggr)^k
\ll  \Biggl(\sum_{\substack{x^{1/\log\log x} < m \leq x, \\ p|m \Rightarrow p > P}} \lambda^2(m)(\frac{x}{mX} + (\frac x{m})^{3/5}) \Biggr)^k. 
\end{split}
\end{align}

  By \eqref{lambdasquareasymp} and partial summation, we see that
\begin{align*}
\begin{split}  
 \sum_{\substack{m \leq x}} \frac{\lambda^2(m)}{m}\ll \log x.
\end{split}
\end{align*}
  It follows from this and the fact that $X=e^{\sqrt{\log x}}$ that
\begin{align}
\label{EEjremainder1}
\begin{split}  
 \sum_{\substack{x^{1/\log\log x} < m \leq x, \\ p|m \Rightarrow p > P}} \frac{x\lambda^2(m)}{mX} \ll \frac{x}{X} \sum_{\substack{m \leq x}} \frac{\lambda^2(m)}{m} \ll \frac {x\log x}{X} \ll \frac {x}{\log P}.
\end{split}
\end{align}
  
  For $x^{1/10} \leq u \leq x$, we now use the sieve method to study the sum
\begin{equation*}
      \sum_{\substack{x^{1/\log\log x} < m \leq u, \\ p|m \Rightarrow p > P}}  \lambda(m)^2.
\end{equation*}
 We apply the upper bound part of \cite[Theorem 12.5]{FI10} for the sequence $\mathcal{A}=\{m: x^{1/\log\log x} < m \leq u  \}$ with $z:=P<u^{1/10}, D=u^{1/20}$ and $Y=L(1, \operatorname{sym}^2 f)\zeta^{-1}(2)(u-x^{1/\log\log x}) \ll u$.  By \cite[Lemma 2.14]{G&Zhao25-11}, we see that
\begin{align}
\label{Ad}
\begin{split}
    &  \sum_{\substack{m \in \mathcal{A} \\ d|m}} \lambda(m)^2=g(d)Y+O(u^{3/4+\varepsilon}),
\end{split}
\end{align}
  where $g$ is defined in \eqref{lambdancond}. It then follows from \eqref{merten1} that the sieve dimension in our case (see \cite[(5.35)]{FI10}) is $\kappa=1$.
We now apply \cite[ Theorem 11.13]{FI10} to see that
\begin{equation*}
    \sum_{\substack{x^{1/\log\log x} < m \leq u, \\ p|m \Rightarrow p > P}}  \lambda(m)^2 \ll YV(z)(F(s)+O((\log D)^{-1/6}))+R(D,z).
\end{equation*}
  Here by \cite[(5.36)]{FI10} and the observation that $\kappa=1$ in our case, 
\begin{align*}
\begin{split}
    &  V(z)=\prod_{p \leq z}(1-g(p)) \ll \frac 1{\log P}.
\end{split}
\end{align*}

  Also, by \eqref{Ad} and the expression for $R(D,z)$ given on \cite[p. 207]{FI10},
\begin{align*}
\begin{split}
    &  R(D,z) \ll u^{3/4+\varepsilon}\sum_{d | \mathcal{P}(z)}1.
\end{split}
\end{align*}
Since $2 \leq z \leq \sqrt{D}$ and $\kappa=1$, we can apply \cite[Lemma 12.3]{FI10}, getting
\begin{align*}
\begin{split}
    &  R(D,z) \ll u^{3/4+\varepsilon}\sum_{d | \mathcal{P}(z)}1 \ll u^{3/4+\varepsilon}D \ll u^{4/5+\varepsilon}.
\end{split}
\end{align*}

   Moreover, we have $s=\log D/\log z$ is large when $x$ is large enough, so by \cite[(11.134)]{FI10}, we see that $F(s)\ll 1$ when $x$ is large enough, where $
F$ is the function defined by delayed differential equations in \cite[(12.1)--(12.2)]{FI10}. We then conclude that for $x^{1/10} \leq u \leq x$,
\begin{equation}
\label{lambdasquareplarge}
     \sum_{\substack{x^{1/\log\log x} < m \leq u, \\ p|m \Rightarrow p > P}} \lambda(m)^2\ll \frac{Y}{\log P} \ll \frac{u}{\log P}.
\end{equation}
  We apply the above and partial summation to see that
\begin{align}
\label{EEjremainder21}
\begin{split}  
  \sum_{\substack{x^{1/10} < m \leq x, \\ p|m \Rightarrow p > P}} \lambda^2(m)(\frac x{m})^{3/5} \ll  \frac {x}{\log P}.
\end{split}
\end{align}

   On the other hand, we apply \eqref{lambdabound} and sum trivially to see that
\begin{align}
\label{EEjremainder22}
\begin{split}  
   \sum_{\substack{x^{1/\log\log x} < m \leq x^{1/10}, \\ p|m \Rightarrow p > P}} \lambda^2(m)(\frac x{m})^{3/5} \ll  x^{3/5}x^{1/10} \ll \frac {x}{\log P}.
\end{split}
\end{align}

   We derive from \eqref{EEjremainder21} and \eqref{EEjremainder22} that
\begin{align*}
\begin{split}  
   \sum_{\substack{x^{1/\log\log x} < m \leq x, \\ p|m \Rightarrow p > P}} \lambda^2(m)(\frac x{m})^{3/5}\ll \frac {x}{\log P}.
\end{split}
\end{align*}  
  The above together with \eqref{EEjremainder1} now implies that the last expression in \eqref{EEjremaindersimplified} is $\ll \frac {x}{\log P}$. We further deduce from this and \eqref{smoothrandsplit}--\eqref{EEjremaindersimplified} that the total contribution to  \eqref{stpnextdisplay} from the second bracket in the last expression of \eqref{smoothrandsplit} is
\begin{align}
\label{2ndcontri}
\begin{split} 
  \ll & \sum_{\textbf{j}} \sigma^{\text{rand}}(\textbf{j}) \Biggl(\frac {x}{\log P}\Biggr)^k=\Biggl(\frac {x}{\log P}\Biggr)^k.
\end{split}
\end{align}
  This contribution to  \eqref{stpnextdisplay} is therefore negligible.
 
  On the other hand, we that the total contribution to \eqref{stpnextdisplay} from the first bracket in the last expression of \eqref{smoothrandsplit} is
\begin{align}
\label{1stcontribution}
\begin{split} 
 \ll & \sum_{\textbf{j}} \sigma^{\text{rand}}(\textbf{j}) \Biggl(\E^{\textbf{j}, \text{rand}} \int_{x^{1/\log\log x}}^{x(1+1/X)} |\sum_{\substack{n \leq x/t, \\ n \; \text{is} \; P \; \text{smooth}}} h(n)\lambda(n)|^2 \sum_{\substack{t/(1+1/X) < m \leq t, \\ p|m \Rightarrow p > P}} \frac{\lambda^2(m)X}{m} dt \Biggr)^k. 
\end{split}
\end{align}

  Note that, similar to \eqref{lambdasquareplarge}, we have
\begin{align*}
  \sum_{\substack{t/(1+1/X) < m \leq t, \\ p|m \Rightarrow p > P}} \frac{\lambda^2(m)X}{m} \ll \frac{X}{t}\sum_{\substack{t/(1+1/X) < m \leq t, \\ p|m \Rightarrow p > P}} \lambda^2(m) \ll \frac{X}{t} \cdot (t-\frac {t}{1+1/X})\frac 1{\log P} \ll \frac 1{\log P}.
\end{align*}
  It follows from this that the expression in \eqref{1stcontribution} is
\begin{align*}
\begin{split} 
\ll & \sum_{\textbf{j}} \sigma^{\text{rand}}(\textbf{j}) (\frac{1}{\log P})^q \Biggl(\E^{\textbf{j}, \text{rand}} \int_{x^{1/\log\log x}}^{x(1+1/X)} |\sum_{\substack{n \leq x/t, \\ n \; \text{is} \; P \; \text{smooth}}} h(n)\lambda(n)|^2 dt \Biggr)^k  \\
 = & \sum_{\textbf{j}} \sigma^{\text{rand}}(\textbf{j}) (\frac{x}{\log P})^q \Biggl(\E^{\textbf{j}, \text{rand}} \int_{1+1/X}^{x^{1-1/\log\log x}} |\sum_{\substack{n \leq z, \\ n \; \text{is} \; P \; \text{smooth}}} h(n)\lambda(n)|^2 \frac{dz}{z^2} \Biggr)^k,
\end{split}
\end{align*}  
 where the equality above follows by a substitution $z=x/t$ in the first integral above. We now apply Lemma \ref{parseval} to see the last expression above is
\begin{equation}\label{produpperintrand}
\ll (\frac{x}{\log P})^k \sum_{\textbf{j}} \sigma^{\text{rand}}(\textbf{j}) (\E^{\textbf{j}, \text{rand}} \int_{-\infty}^{\infty} \frac{|F_{P,f}^{\text{rand}}(1/2 + it)|^2}{|1/2+it|^2} dt)^k ,
\end{equation}
where $F_{P,f}^{\text{rand}}(s)$ is defined as in \eqref{FPdef}. 

As $0<k \leq 1$, we apply the estimation that $(\sum^{\infty}_{i=1}a_i)^k \leq \sum^{\infty}_{i=1}a^k_i$ for convergent series $\sum^{\infty}_{i=1}a_i$ with $a_i \geq 0$ to see that we may divide the integral in \eqref{produpperintrand} into sub-intervals of length $1$ so that the expression in \eqref{produpperintrand} is
\begin{eqnarray}
& \leq & (\frac{x}{\log P})^k \sum_{\textbf{j}} \sigma^{\text{rand}}(\textbf{j}) \sum_{v=-\infty}^{\infty} (\E^{\textbf{j}, \text{rand}} \int_{v-1/2}^{v+1/2} \frac{|F_{P,f}^{\text{rand}}(1/2 + it)|^2}{|1/2+it|^2} dt)^k \nonumber \\
& \ll & (\frac{x}{\log P})^k \sum_{v=-\infty}^{\infty} \frac{1}{(|v|+1)^{4/3}} \sum_{\textbf{j}} \sigma^{\text{rand}}(\textbf{j}) (\E^{\textbf{j}, \text{rand}} \int_{v-1/2}^{v+1/2} |F_{P,f}^{\text{rand}}(1/2 + it)|^2 dt)^k . \nonumber
\end{eqnarray}

  We apply H\"{o}lder's inequality and the orthogonality of random multiplicative functions to see that the sum over the terms with $|v| > \log^{0.01}P$ in the last expression above is
\begin{align*}
\begin{split}
 \leq & (\frac{x}{\log P})^k \sum_{|v| > \log^{0.01}P} \frac{1}{(|v|+1)^{4/3}} (\sum_{\textbf{j}} \sigma^{\text{rand}}(\textbf{j}) \E^{\textbf{j}, \text{rand}} \int_{v-1/2}^{v+1/2} |F_{P,f}^{\text{rand}}(1/2 + it)|^2 dt)^k  \\
 = & (\frac{x}{\log P})^k \sum_{|v| > \log^{0.01}P} \frac{1}{(|v|+1)^{4/3}} (\E \int_{v-1/2}^{v+1/2} |F_{P,f}^{\text{rand}}(1/2 + it)|^2 dt)^k \\
 =& (\frac{x}{\log P})^q \sum_{|v| > \log^{0.01}P} \frac{1}{(|v|+1)^{4/3}} ( \sum_{\substack{n = 1, \\ n \; \text{is} \; P \; \text{smooth}}}^{\infty} \frac{\lambda^2(n)}{n} )^k \\
\ll &  (\frac{x}{\log P})^k \frac{1}{\log^{1/300}P} \log^{k}P, 
\end{split}
\end{align*} 
  where the last estimation above follows from \eqref{lambdansquare0}. This contribution to  \eqref{stpnextdisplay} is also negligible.

 It follows from \eqref{2ndcontri}, \eqref{produpperintrand} and our discussions above that in order to establish \eqref{stpnextdisplay}, it suffices to show that we have uniformly for all $|v| \leq \log^{0.01}P$, 
\begin{equation}\label{tobeprovedeqrand}
\sum_{\textbf{j}} \sigma^{\text{rand}}(\textbf{j}) (\E^{\textbf{j}, \text{rand}} \int_{v-1/2}^{v+1/2} |F_{P,f}^{\text{rand}}(1/2 + it)|^2 dt)^k \ll \Biggl(\frac{\log P}{1 + (1-k)\sqrt{\log\log P}}\Biggr)^{k} .
\end{equation}

   As the treatments are similar, we consider only the case $v=0$ in \eqref{tobeprovedeqrand} in what follows. 
In this case, we have
\begin{eqnarray}
&& \sum_{\textbf{j}} \sigma^{\text{rand}}(\textbf{j}) (\E^{\textbf{j}, \text{rand}} \int_{-1/2}^{1/2} |F_{P,f}^{\text{rand}}(1/2 + it)|^2 dt)^k \nonumber \\
& \ll & \sum_{\textbf{j}} \sigma^{\text{rand}}(\textbf{j}) ( \E^{\textbf{j}, \text{rand}} \sum_{|m| \leq \frac{\log^{1.01}P}{2}} \int_{-\frac{1}{2\log^{1.01}P}}^{\frac{1}{2\log^{1.01}P}} |F_{P,f}^{\text{rand}}(\frac{1}{2} + i\frac{m}{\log^{1.01}P}+it) - F_{P,f}^{\text{rand}}(\frac{1}{2} + i\frac{m}{\log^{1.01}P})|^2 dt )^k \nonumber \\
&& + \sum_{\textbf{j}} \sigma^{\text{rand}}(\textbf{j}) (\E^{\textbf{j}, \text{rand}} \frac{1}{\log^{1.01}P} \sum_{|m| \leq (\log^{1.01}P)/2} |F_{P,f}^{\text{rand}}(1/2 + i\frac{m}{\log^{1.01}P})|^2 )^k. \nonumber
\end{eqnarray}
  We now apply H\"older's inequality to the sum over $\textbf{j}$ and Lemma \ref{disccontlem} to see that the first expression on the right-hand side above is
$$ \leq \Biggl( \E \sum_{|m| \leq \frac{\log^{1.01}P}{2}} \int_{-\frac{1}{2\log^{1.01}P}}^{\frac{1}{2\log^{1.01}P}} |F_{P,f}^{\text{rand}}(1/2 + i\frac{k}{\log^{1.01}P} + it) - F_{P,f}^{\text{rand}}(1/2 + i\frac{m}{\log^{1.01}P})|^2 dt \Biggr)^k \ll \log^{0.99q}P. $$
This gives a negligible contribution to \eqref{tobeprovedeqrand}. We thus conclude that in order to complete the proof of Theorem \ref{lowerboundsfixedmodmean},  it suffices to show that
\begin{equation}\label{tobeprovedeqrand1}
\sum_{\textbf{j}} \sigma^{\text{rand}}(\textbf{j}) (\E^{\textbf{j}, \text{rand}} \frac{1}{\log^{1.01}P} \sum_{|m| \leq (\log^{1.01}P)/2} |F_{P,f}^{\text{rand}}(1/2 + i\frac{m}{\log^{1.01}P})|^2 )^k \ll \Biggl(\frac{\log P}{1 + (1-k)\sqrt{\log\log P}}\Biggr)^{k} .
\end{equation}

\subsection{Completion of the proof}\label{subsecconclusion}

We now define
$$ \mathcal{T} := \{m \in \mz : |F_{P,f}^{\text{rand}}(1/2 + i\frac{m}{\log^{1.01}P})| \geq \log^{1.1}P \;\;\; \text{or} \;\;\; |F_{P,f}^{\text{rand}}(1/2 + i\frac{m}{\log^{1.01}P})| \leq \frac{1}{\log^{1.1}P} \}. $$
 We divide the sum over $|m| \leq (\log^{1.01}P)/2$ according to whether $m \in \mathcal{T}$ or not, and then apply H\"{o}lder's inequality to see that
\begin{align}
\label{Etau}
\begin{split}
& \sum_{\textbf{j}} \sigma^{\text{rand}}(\textbf{j}) (\E^{\textbf{j}, \text{rand}} \frac{1}{\log^{1.01}P} \sum_{|m| \leq (\log^{1.01}P)/2} |F_{P,f}^{\text{rand}}(1/2 + i\frac{m}{\log^{1.01}P})|^2 )^k  \\
 \leq & \sum_{\textbf{j}} \sigma^{\text{rand}}(\textbf{j}) (\E^{\textbf{j}, \text{rand}} \frac{1}{\log^{1.01}P} \sum_{\substack{|m| \leq (\log^{1.01}P)/2, \\ m \notin \mathcal{T}}} |F_{P,f}^{\text{rand}}(1/2 + i\frac{m}{\log^{1.01}P})|^2 )^k \\
& + \Biggl( \frac{1}{\log^{1.01}P} \sum_{|m| \leq (\log^{1.01}P)/2} \E \textbf{1}_{m \in \mathcal{T}} |F_{P,f}^{\text{rand}}(1/2 + i\frac{m}{\log^{1.01}P})|^2 \Biggr)^k. 
\end{split}
\end{align} 

  Note that
\begin{align}
\label{E1tau}
\begin{split}
    \E \textbf{1}_{m \in \mathcal{T}} |F_{P,f}^{\text{rand}}(1/2 + i\frac{m}{\log^{1.01}P})|^2 \ll (\log^{1.1}P)^{-0.2} \E |F_{P,f}^{\text{rand}}(1/2 + i\frac{m}{\log^{1.01}P})|^{2.2} + (\frac{1}{\log^{1.1}P})^2
\end{split}
\end{align}

We now treat $\E |F_{P,f}^{\text{rand}}(1/2 + i\frac{m}{\log^{1.01}P})|^{2.2}$ by noting first that we have the trivial bound
\begin{equation*}
    \prod_{p\leq 400}\Big| 1-\frac{\alpha_ph(p)}{p^{1/2+i\frac{m}{\log^{1.01}P}}}\Big|^{-2.2}\Big| 1-\frac{\beta_ph(p)}{p^{1/2+i\frac{m}{\log^{1.01}P}}}\Big|^{-2.2} \ll 1.
\end{equation*}
 We then apply \eqref{Eest0} with $\alpha=1.1,\beta=0, \sigma_1=\sigma_2=0, t_1=t_2=\frac{m}{\log^{1.01}P}$ and $z=400$ together with \eqref{alphalambdarel}, \eqref{merten1} to see that
\begin{equation*}
\begin{split}
   & \E |F_{P,f}^{\text{rand}}(1/2 + i\frac{m}{\log^{1.01}P})|^{2.2} \ll   \exp\bigg(\sum_{400<p\leq P}\frac{1.1^2\lambda^2(p)}{p}\bigg) \ll \log^{1.21}P. 
\end{split}
\end{equation*}
  It follows from this and \eqref{E1tau} that
\begin{align*}
\begin{split}
    \E \textbf{1}_{m \in \mathcal{T}} |F_{P,f}^{\text{rand}}(1/2 + i\frac{m}{\log^{1.01}P})|^2 \ll (\log^{1.1}P)^{-0.2} \log^{1.21}P + (\frac{1}{\log^{1.1}P})^2 \ll \log^{0.99}P. 
\end{split}
\end{align*}
  We deduce from this that
\begin{align*}
\begin{split}
    \Biggl( \frac{1}{\log^{1.01}P} \sum_{|m| \leq (\log^{1.01}P)/2} \E \textbf{1}_{m \in \mathcal{T}} |F_{P,f}^{\text{rand}}(1/2 + i\frac{m}{\log^{1.01}P})|^2 \Biggr)^k \ll \Biggl( \frac{1}{\log^{0.02}P}\Biggr)^k . 
\end{split}
\end{align*} 
 The above gives an negligible contribution to \eqref{tobeprovedeqrand1}.

Next, we note that the contribution to \eqref{tobeprovedeqrand1} from the sum over $m \notin \mathcal{T}$ in \eqref{Etau} is
\begin{eqnarray}
& \leq & \sum_{\textbf{j}} \sigma^{\text{rand}}(\textbf{j}) (\E^{\textbf{j}, \text{rand}} \frac{1}{\log^{1.01}P} \sum_{\substack{|m| \leq (\log^{1.01}P)/2, \\ k \notin \mathcal{T}}} \textbf{1}_{|S_{m,f}(h) - j(m)| \leq 1} |F_{P,f}^{\text{rand}}(1/2 + i\frac{m}{\log^{1.01}P})|^2 )^k + \nonumber \\
&& + \sum_{\textbf{j}} \sigma^{\text{rand}}(\textbf{j}) (\E^{\textbf{j}, \text{rand}} \frac{1}{\log^{1.01}P} \sum_{|m| \leq (\log^{1.01}P)/2} \textbf{1}_{|S_{k,f}(h) - j(m)| > 1} \textbf{1}_{m \notin \mathcal{T}} |F_{P,f}^{\text{rand}}(1/2 + i\frac{m}{\log^{1.01}P})|^2 )^k. \nonumber
\end{eqnarray}

  We now applying H\"{o}lder's inequality to the sum over $\textbf{j}$, and recalling the definitions of $\E^{\textbf{j}, \text{rand}}$ and $\sigma^{\text{rand}}(\textbf{j})$ to see that
\begin{align*}
\begin{split}
 & \sum_{\textbf{j}} \sigma^{\text{rand}}(\textbf{j}) (\E^{\textbf{j}, \text{rand}} \frac{1}{\log^{1.01}P} \sum_{|m| \leq (\log^{1.01}P)/2} \textbf{1}_{|S_{k,f}(h) - j(m)| > 1} \textbf{1}_{m \notin \mathcal{T}} |F_{P,f}^{\text{rand}}(1/2 + i\frac{m}{\log^{1.01}P})|^2 )^k\\
\ll & (\frac{1}{\log^{1.01}P} \sum_{|m| \leq \frac{\log^{1.01}P}{2}} \sum_{\textbf{j}} \E \prod_{i=-M}^{M} g_{j(i)}(S_{i,f}(h)) \cdot \textbf{1}_{|S_{m,f}(h) - j(m)| > 1} \textbf{1}_{m \notin \mathcal{T}} |F_{P,f}^{\text{rand}}(1/2 + i\frac{m}{\log^{1.01}P})|^2 )^k. 
\end{split}
\end{align*}

  It follows from the discussions in Section 3.5 of \cite{Harper23} that when $N \geq 1.2\log\log P$, we have by Lemma \ref{apres} that 
\begin{align*}
\begin{split}
 & (\frac{1}{\log^{1.01}P} \sum_{|m| \leq \frac{\log^{1.01}P}{2}} \sum_{\textbf{j}} \E \prod_{i=-M}^{M} g_{j(i)}(S_{i,f}(h)) \cdot \textbf{1}_{|S_{m,f}(h) - j(m)| > 1} \textbf{1}_{m \notin \mathcal{T}} |F_{P,f}^{\text{rand}}(1/2 + i\frac{m}{\log^{1.01}P})|^2 )^k \\
\ll & (\frac{\delta}{\log^{1.01}P} \sum_{|m| \leq \frac{\log^{1.01}P}{2}} \sum_{-N \leq j(m) \leq N+1} \E |F_{P,f}^{\text{rand}}(1/2 + i\frac{m}{\log^{1.01}P})|^2 )^k \\
 \ll & (\delta N \sum_{\substack{n = 1, \\ n \; \text{is} \; P \; \text{smooth}}}^{\infty} \frac{\lambda^2(n)}{n} )^k   \ll  (\delta N \log P)^k,
\end{split}
\end{align*}
 where the last estimation above follows from \eqref{lambdansquare0}. This will leads to a negligible contribution to \eqref{tobeprovedeqrand1} provided we choose $\delta$ so that $\delta \leq \frac{1}{N\sqrt{\log\log P}}$.

  In view of the above discussions, we see that in order to establish \eqref{tobeprovedeqrand1}, it suffices to show that
\begin{eqnarray}
&& \sum_{\textbf{j}} \sigma^{\text{rand}}(\textbf{j}) \Biggl(\E^{\textbf{j}, \text{rand}} \frac{1}{\log^{1.01}P} \sum_{\substack{|m| \leq (\log^{1.01}P)/2, \\ k \notin \mathcal{T}}} \textbf{1}_{|S_{m,f}(h) - j(m)| \leq 1} |F_{P,f}^{\text{rand}}(1/2 + i\frac{m}{\log^{1.01}P})|^2 \Biggr)^k \nonumber \\
& \ll & \Biggl(\frac{\log P}{1 + (1-q)\sqrt{\log\log P}}\Biggr)^{k} . \nonumber
\end{eqnarray}

  We now recall from \eqref{FPdef} and \eqref{Skf} that $|F_{P,f}^{\text{rand}}(1/2 + i\frac{m}{\log^{1.01}P})| = \exp\Big (-\Re \sum_{p \leq P} \log(1 - \frac{\alpha_ph(p)}{p^{1/2 + i\frac{m}{\log^{1.01}P}}})-\Re \sum_{p \leq P} \log(1 - \frac{\beta_ph(p)}{p^{1/2 + i\frac{m}{\log^{1.01}P}}})\Big ) \asymp \exp\Big (
S_{m,f}(h)\Big )$ by \eqref{alphalambdarel} for all $m$ and $h(n)$. We then proceed as in Section 3.6 of \cite{Harper23} upon using Lemma \ref{apres} to see that
\begin{align*}
\begin{split}
 & \sum_{\textbf{j}} \sigma^{\text{rand}}(\textbf{j}) \Biggl(\E^{\textbf{j}, \text{rand}} \frac{1}{\log^{1.01}P} \sum_{\substack{|m| \leq (\log^{1.01}P)/2, \\ k \notin \mathcal{T}}} \textbf{1}_{|S_{m,f}(h) - j(m)| \leq 1} |F_{P,f}^{\text{rand}}(1/2 + i\frac{m}{\log^{1.01}P})|^2 \Biggr)^k \\
 \ll & \E( \frac{1}{\log^{1.01}P} \sum_{|m| \leq (\log^{1.01}P)/2} |F_{P,f}^{\text{rand}}(1/2 + i\frac{m}{\log^{1.01}P})|^2 )^k+(\delta N \log^{2.2}P)^k \\
\ll & \left(\frac{\log P}{1 + (1-k)\sqrt{\log\log P}}\right)^{k}+(\delta N \log^{2.2}P)^k,
\end{split}
\end{align*}
 where the last estimation above follows from Lemma \ref{mcres2}. The above estimation now allows us to obtain the bound given in \eqref{tobeprovedeqrand1}  as long as we have $\delta \leq \frac{1}{N \log^{1.2}P \sqrt{\log\log P}}$. 

Recall now we needed to have $x P^{4000((2M+1)/\delta)^2 \log(N\log P)} < q$ in order to apply Proposition \ref{condexpprop} in section \ref{subsecmaingorandom}. As we have $M = 2\log^{1.02}P$, the above estimation will hold provided that $P^{10^5 (\log^{2.04}P) (1/\delta)^2 \log(N\log P)} < q/x$. Recall also from our discussions above that we need to have $N \geq 1.2\log\log P$ and $\delta \leq \frac{1}{N\sqrt{\log\log P}}$, as well as $\delta \leq \frac{1}{N \log^{1.2}P \sqrt{\log\log P}}$.

We now set $N = \lceil 1.2\log\log P \rceil$, and $\delta = \frac{1}{\log^{1.3}P}$ to see that for these values, we have $P^{10^5 (\log^{2.04}P) (1/\delta)^2 \log(N\log P)} < P^{10^6(\log^{4.64}P) \log\log P}$. Recall that $P$ is slightly smaller than $\exp(\log^{1/6}L)$ so that we have $P^{10^6(\log^{4.64}P) \log\log P} \leq \exp(10^7 (\log^{5.64/6} L)\log \log L) \leq \exp(\log L) \leq q/x$. As all the other conditions above are satisfied, this implies the estimation given in  \eqref{tobeprovedeqrand1} and hence completes the proof of Theorem \ref{lowerboundsfixedmodmean}.

\vspace*{.5cm}

\noindent{\bf Acknowledgments.} P. Gao is supported in part by NSFC grant 12471003, X. Wu is supported in part by NSFC grant 12271135 and the Fundamental Research Funds for the Central Universities grant JZ2025HGTG0254.

\bibliography{biblio}
\bibliographystyle{amsxport}

\end{document}